\renewcommand{\MR}[1]{~\href{https://mathscinet.ams.org/mathscinet-getitem?mr=MR#1}{MR#1}.}
\numberwithin{equation}{section}  
\newcommand{\im}{\mbox{Im}}
\newcommand{\iot}{\int_{0}^{t}}
\newcommand{\ou}{[0,1]}
\newcommand{\ud}{\ensuremath{\mathrm{d}}}
\newcommand{\Norm}[1]{\left\|  #1   \right\|}
\newcommand{\cj}{\mathcal{J}}
\newcommand{\cn}{\mathcal{N}}
\newcommand{\EE}{\mathbb{E}}
\newcommand{\E}{\mathbb{E}}
\newcommand{\R}{\mathbb{R}}
\newcommand{\al}{\alpha}
\newcommand{\vp}{\varphi}
\newtheorem{theorem}{Theorem}[section]
\newtheorem{definition}[theorem]{Definition}
\newtheorem{lemma}[theorem]{Lemma}
\newtheorem{notation}[theorem]{Notation}
\newtheorem{proposition}[theorem]{Proposition}
\theoremstyle{remark}
\newtheorem{remark}[theorem]{Remark}
\theoremstyle{remark}
\newtheorem{example}[theorem]{Example}
\begin{document}

\title[Ergodicity for stochastic PDEs]{On ergodic properties of stochastic PDEs}

\author[L. Chen]{Le Chen}
\address{L. Chen: Department of Mathematics and Statistics, Auburn University, Auburn}
\email{le.chen@auburn.edu}

\author[C. Ouyang]{Cheng Ouyang}
\address{C. Ouyang: Department of Mathematics, Statistics, and Computer Science, University of Illinois at Chicago, Chicago}
\email{couyang@uic.edu}

\author[S. Tindel]{Samy Tindel}
\address{S. Tindel: Department of Mathematics, Purdue University, West Lafayette}
\email{stindel@purdue.edu}

\author[P. Xia]{Panqiu Xia}
\address{P. Xia: School of Mathematics, Cardiff University, Cardiff, United Kingdom}
\email{xiap@cardiff.ac.uk}

\maketitle
\vspace{-2em}

\begin{center}
  \today
\end{center}

\begin{center}
 \emph{Dedicated to Giuseppe (Beppe) Da Prato, in memoriam}
\end{center}

\begin{abstract}
  In this note we review several situations in which stochastic PDEs exhibit
  ergodic properties. We begin with the basic dissipative conditions, as stated
  by Da Prato and Zabczyk in their classical monograph. Then we describe the
  singular case of SPDEs with reflection. Next we move to some degenerate (and
  thus more demanding) settings. Namely we recall some results obtained around
  2006, concerning stochastic Navier-Stokes equations with a very degenerate
  noise. We finish the article by handling some cases with degenerate
  coefficients. This includes a new result about the parabolic Anderson model in
  dimension $d\ge 3$, driven by a general class of noises and fairly general
  initial conditions. In this context, a phase transition is observed, expressed
  in terms of the noise intensity. \medskip

  \noindent\textit{Keywords.~} Invariant measure; Dalang's condition;
  ergodicity; stochastic heat equation; parabolic Anderson model; phase
  transition. \medskip

  \noindent\textit{MSC class.~} 60H15; 35R60; 37L40; 35K05.

\end{abstract}


\begingroup
\hypersetup{linkcolor=black}
\tableofcontents
\endgroup

\section{Introduction}

Any person having attended a stochastic analysis conference in the 90's-00's
will certainly remember Beppe, consistently beginning his talks with the famous
singing words: ``let $A$ be an operator on a Hilbert space $H$, generating a
$C_{0}$-semigroup $S(t)$\ldots''. However, behind what could sometimes be seen
as a kind of ritual, lies an important fact: Beppe was an exceptional pioneer in
the area of stochastic PDEs, who was completely passionate about his topic. Most
importantly, he certainly was the undisputed leader of the field for decades.
This outstanding legacy is well highlighted by the series of
books~\cite{da-prato.zabczyk:96:ergodicity, da-prato.zabczyk:02:second,
da-prato.zabczyk:14:stochastic} written in collaboration with Jerzy Zabczyk,
which have been and still are an immense source of inspiration for the
stochastic PDE community.

Among the influential contributions mentioned above, the current paper will
single out the monograph~\cite{da-prato.zabczyk:96:ergodicity}. This book brings
together the worlds of general ergodic theory and stochastic models in infinite
dimensions, a delicate task which is achieved by Da Prato and Zabczyk in an
astonishingly clear and consistent way. Starting from the foundations, the book
covers a wide range of applications to systems (dissipative equations,
Navier-Stokes equations, spin systems) which are still the object of active
research today.

Our contribution first proposes to review a few basic facts contained
in~\cite{da-prato.zabczyk:96:ergodicity}. Namely we will recall the fundamental
notions allowing to get ergodic type results for infinite dimensional stochastic
differential equations seen as dynamical systems. We will illustrate this
technology by stating an ergodic result for a generic stochastic PDE. We shall
then examine some cases departing from the standard Da Prato-Zabczyk setting.
Without any pretension to be exhaustive, we wish to give an account on the
following situations:

\begin{enumerate}[wide, labelwidth=!, labelindent=0pt, label=\textbf{(\roman*)}]
  \setlength\itemsep{.1in}

  \item \emph{Case of reflected equations.}
    Following~\cite{zambotti:01:reflected}, we will see how reflections change
    the landscape in terms of invariant measures. This encompasses the
    identification of new invariant measures, as well as integration by parts
    formulae.

  \item \emph{Ergodicity in degenerate situations.} In the celebrated
    result~\cite{hairer.mattingly:06:ergodicity}, Hairer and Mattingly showed
    that a nondegenerate noise is not necessary for good mixing properties of a
    stochastic PDE. Their result focuses on a stochastic Navier-Stokes equation,
    for which only four active modes in the noise are requested.

  \item \emph{Phase transition in ergodicity.} Our own contribution will be
    related to another degenerate situation, namely the parabolic Anderson model
    (PAM). There we will see that for a spatial variable $x\in\R^{d}$ with $d\ge
    3$, we have two cases: at high temperature the equation exhibits an ergodic
    behavior; In contrast, for low temperatures the moments of the solution blow
    up as $t\to\infty$. We shall give a self contained proof of the ergodic
    behavior based on contraction arguments, for a general spatial covariance of
    the noise. This result appears to be new and is inspired
    by~\cite{gerolla.hairer.ea:23:fluctuations, gu.li:20:fluctuations}.

\end{enumerate}

\noindent This constant dialogue with Da Prato and Zabczyk's books (seen as
benchmarks) is a staple in the stochastic analysis literature. We will also
mention possible new developments for polymer measures.

\medskip

Our article is structured as follows: in Section~\ref{S:Standard} we recall the
classical Da Prato-Zabczyk ergodic setting for stochastic PDEs.
Section~\ref{S:ReflectSHE} is devoted to equations with reflection. In
Section~\ref{S:NS} we turn to the case of degenerate noises, focusing on the
stochastic Navier-Stokes case. Eventually Section~\ref{S:PAM} deals with phase
transitions in ergodicity for generalized PAMs. A Gronwall-type lemma is
proved in Appendix~\ref{S:Gronwall}.


\section{The standard stochastic PDE case}\label{S:Standard}

In this section, we describe the standard ergodic setting for stochastic PDEs in
Da Prato and Zabczyk's language. We will spell out the general setting in
Section~\ref{SS:Abstract} and then outline the main ergodic result in
Section~\ref{SS:Ergodic}.

\subsection{Abstract setting for stochastic PDEs}\label{SS:Abstract}

Let us briefly recall the abstract setting which is used
in~\cite{da-prato.zabczyk:96:ergodicity} in order to solve stochastic
differential equations in infinite dimensions. It can be split in four
ingredients (notice that for sake of conciseness we will not define every
technical term below, we refer to Beppe's original book for a complete and
self-contained exposition):

\begin{enumerate}[wide, labelwidth=!, labelindent=0pt, label=\textbf{(\alph*)}]
  \setlength\itemsep{.1in}

  \item\label{it:dpz-setting-a} As recalled in the introduction, everything
    starts with the infinitesimal generator $A$ of a strongly continuous
    semigroup $\{S(t); t \ge 0\}$ defined on a separable Hilbert space $H$
    (notice that analyticity of $S(t)$ is further assumed
    in~\cite{da-prato.zabczyk:96:ergodicity}).

  \item\label{it:dpz-setting-b} One considers a mapping $F:H\to H$ with linear
    growth and Lipschitz continuity. That is one has
    \begin{align}\label{E:StdF}
      F(x)\lesssim 1+ |x|, \quad
      \mathrm{and}\quad |F(x)-F(y)|\lesssim |x-y|,
    \end{align}
    where $|\cdot|$ denotes the norm in $H$. The mapping $F$ will be the drift
    of our equation.

  \item\label{it:dpz-setting-c} On another (possibly larger) Hilbert space $U$,
    let $Q$ be a trace class operator. Then we can define a $Q$-Wiener process
    on a complete probability space $(\Omega, \mathcal{F}, \mathbb{P})$. This is
    a $U$-valued continuous process $\{ W_{t}; t \ge 0\}$ with independent
    increments and such that for all $t \ge 0, \, h > 0$ we have
    \begin{align}\label{E:Noise-STD}
      \frac{1}{h^{1/2}}\left(W_{t+h}-W_t\right) \sim \cn(0, Q).
    \end{align}
    Notice that the space $U$ is equipped with an inner product induced by $Q$:
    \begin{align}\label{E:InnerProdU}
      \langle \xi, \psi\rangle_Q \coloneqq \langle\xi, Q\psi\rangle_U.
    \end{align}

  \item\label{it:dpz-setting-d} The diffusion coefficient $B$ of our equation
    has to be compatible with the $Q$-Wiener process and the semigroup $S(t)$.
    Namely $B: H\to L(U;H)$ has to be a strongly continuous map such that the
    following growth conditions are fulfilled for every $t>0$ and $x,y\in H$:
    \begin{gather}
      \|S(t)B(x)\|_{\mathrm{HS}}\le K(t) (1+|x|), \\
      \|S(t)B(x)-S(t)B(y)\|_{\mathrm{HS}}\le K(t)|x-y|,
    \end{gather}
    where $K$ is a kernel which sits in the space $L^2([0,T])$ for every $T>0$.
    We also label the following assumption which is useful in order to get
    regularity properties of our stochastic differential equations: there exists
    $\alpha\in (0,1/2)$ such that for every $T>0$ we have
    \begin{align}\label{E:Cond_K}
      \int_0^T s^{-2\alpha}K^2(s)\ud s<\infty.
    \end{align}
    As we will see in the next section, the above setting is sufficient to
    ensure existence, uniqueness and ergodic properties of a general class of
    stochastic PDEs.

\end{enumerate}

\subsection{Ergodic properties}\label{SS:Ergodic}

With our setting of Section~\ref{SS:Abstract} in hand, the general type of
$H$-valued equation considered in~\cite{da-prato.zabczyk:96:ergodicity} can be
written as
\begin{align}\label{E:general}
  \ud X_t=\left[ AX_t+F(X_t)\right]\ud t+B(X_t)\ud W_t,
\end{align}
with an initial condition $X_0=\xi$ for a given $\xi\in H$. Observe that
equation~\eqref{E:general} is solved in the so-called mild sense, which can be
written as
\begin{align}\label{E:G-mild}
  X_t=S(t)\xi+\int_0^tS(t-s)F(X_s)\ud s+\int_0^tS(t-s)B(X_s)\ud W_s,
\end{align}
where the stochastic integral in~\eqref{E:G-mild} has to be interpreted in the
It\^{o} sense.

The conditions~\ref{it:dpz-setting-a}--\ref{it:dpz-setting-d} in
Section~\ref{SS:Abstract} ensure existence and uniqueness of the solution
to~\eqref{E:general}, although a complete theory for this fact is better found
in the first Da Prato-Zabczyk volume~\cite{da-prato.zabczyk:02:second}. As far
as ergodic properties are concerned, it should be noticed that ergodic behaviors
often rely on damping terms in dynamic systems. In case of
equation~\eqref{E:general}, this damping is provided by the operator $A$ (whose
spectrum is implicitly thought of as mostly negative). The main result in this
direction is summarized in the following theorem.
\begin{theorem}\label{T:MomCond}
  Assume the setting of Section~\ref{SS:Abstract} holds true. In addition,
  suppose that the operators $S(t)$ are compact and that the following uniform
  bound on moments is satisfied: one can find $T_0>0$ and $p\ge 2$ such that
  \begin{align}\label{E:MomCond}
    \sup_{t\ge T_0, \, \xi\in H}\mathbb{E}_\xi\left[ |X_t|^p\right]<\infty.
  \end{align}
  Then there exists an invariant measure for equation~\eqref{E:general}.
\end{theorem}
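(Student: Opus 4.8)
The plan is to invoke the Krylov--Bogoliubov theorem, reducing the whole argument to proving tightness of a family of time-averaged laws. Let $\{P_t\}$ denote the Markov transition semigroup associated with the mild solution~\eqref{E:G-mild}; the Lipschitz conditions in~\ref{it:dpz-setting-b} and~\ref{it:dpz-setting-d} ensure continuous dependence on the initial datum, hence the Feller property of $\{P_t\}$. Fixing an initial condition (say $\xi=0$), I would introduce the Ces\`aro averages
\begin{align}\label{E:Cesaro}
  \mu_T(\Gamma)=\frac1T\iott \PP\bigl(X_t\in\Gamma\bigr)\,\ud t,
  \qquad \Gamma\in\cb(H).
\end{align}
By Krylov--Bogoliubov, once $\{\mu_T\}_{T\ge 1}$ is shown to be tight, any weak subsequential limit as $T\to\infty$ is an invariant measure for~\eqref{E:general}, which is exactly the claim.

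The crux is therefore tightness of $\{\mu_T\}$ in the infinite-dimensional space $H$, where the uniform moment bound~\eqref{E:MomCond} alone is insufficient, since bounded sets are not precompact. This is precisely where compactness of $S(t)$ enters. Together with the analyticity assumed in~\ref{it:dpz-setting-a}, it forces $A$ to have compact resolvent and makes the fractional powers $(-A)^\al$ and the smoothing bound $\Norm{(-A)^\al S(t)}\lesssim t^{-\al}$ available; consequently, for any $\al\in(0,1/2)$ the intermediate space $V:=\dom\bigl((-A)^\al\bigr)$ embeds compactly into $H$. The goal then becomes to show that the laws of $X_t$ charge a large ball of $V$ with overwhelming probability, uniformly in $t\ge T_0$.

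To obtain such a bound I would split the mild solution as $X_t=S(t)\xi+\Psi_t+Z_t$, with the drift convolution $\Psi_t=\iot S(t-s)F(X_s)\,\ud s$ and the stochastic convolution $Z_t=\iot S(t-s)B(X_s)\,\ud W_s$. Applying the factorization method of Da Prato--Kwapie\'n--Zabczyk together with the kernel condition~\eqref{E:Cond_K}, one would estimate $\E\bigl[\Norm{Z_t}_V^q\bigr]$ for a suitable exponent $q$ (dictated by the interplay of $p$ and $\al$) by a constant times $\sup_s\E[|X_s|^p]$, uniformly in $t\ge T_0$; the deterministic term $\Psi_t$ is controlled analogously using the linear growth of $F$ in~\eqref{E:StdF} and the smoothing of $S$, while $S(t)\xi$ is a bounded deterministic contribution that is harmless for $\xi=0$. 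A Chebyshev argument then yields, for each $\ep>0$, a radius $R$ with $\PP\bigl(X_t\notin B_V(R)\bigr)\le\ep$ for all $t\ge T_0$; since the closure $K_\ep$ of $B_V(R)$ is compact in $H$ by the compact embedding $V\hookrightarrow H$, one gets $\mu_T(K_\ep)\ge 1-\ep-T_0/T$ for all large $T$, which is the desired tightness.

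The main obstacle I anticipate is the uniform-in-time regularity estimate for $Z_t$ in the compactly embedded space $V$. The factorization method reduces it to the integrability condition~\eqref{E:Cond_K}, but the delicate point is to keep all constants independent of $t$ and to correctly match the moment exponent $p$ from~\eqref{E:MomCond} with the regularity exponent $\al$ through a Burkholder--Davis--Gundy inequality applied in the Hilbert--Schmidt norm. Making this interplay between the moment bound, the kernel condition, and the compact embedding fully uniform in time is where essentially all the work resides; once it is in place, the passage to an invariant measure is a routine application of Krylov--Bogoliubov.
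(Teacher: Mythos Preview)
Your overall strategy---Krylov--Bogoliubov, factorization, and a compactness argument---is the same as the paper's, but the way you handle the time variable has a genuine gap. You propose to bound $\E\bigl[\Norm{\Psi_t}_V^q\bigr]$ and $\E\bigl[\Norm{Z_t}_V^q\bigr]$ uniformly in $t$ by factorizing over the full interval $[0,t]$. Under the stated hypotheses the semigroup is only compact, not exponentially stable, so there is no decay to absorb the growing interval. Concretely, the drift term alone already satisfies
\[
\Norm{(-A)^\al \Psi_t} \lesssim \int_0^t (t-s)^{-\al}\bigl(1+|X_s|\bigr)\,\ud s,
\]
which grows like $t^{1-\al}$ even when $\sup_s|X_s|$ is bounded; the stochastic convolution has the same defect. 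The difficulty is not a matter of ``matching exponents'' as you suggest---it is structural: factorization delivers regularity on a \emph{fixed} finite interval, with constants that blow up with the interval length.

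The paper's remedy is a Markov restart. Rather than analyzing $X_t$ over $[0,t]$, it looks at $X_1$ started from a \emph{generic} $\xi\in H$, writing $X_1 = S(1)\xi + G_1 F(X) + G_\al Y$ where $G_\al\colon L^p([0,1];H)\to H$ acts on the unit interval only. Compactness of $S(t)$ makes each $G_\al$ compact as an operator, so images of bounded sets are compact $K_\ep\subset H$, and one gets $\PP_\xi(X_1\notin K_\ep)\le C\ep^p(1+|\xi|^p)$. The supremum over $\xi\in H$ in~\eqref{E:MomCond}---which your argument never exploits, having fixed $\xi=0$---is precisely what allows one to condition on $\cf_{t-1}$ and regard $X_t$ as $X_1$ restarted from the random point $X_{t-1}$: this yields $\PP(X_t\notin K_\ep)\le C\ep^p\bigl(1+\E|X_{t-1}|^p\bigr)$, bounded uniformly in $t$. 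Your route can be repaired by inserting exactly this restart, but once you do it essentially coincides with the paper's argument; the compact embedding $V\hookrightarrow H$ and the operator compactness of $G_\al$ are then just two packagings of the same fact.
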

\begin{proof}
  The detailed proof is found in \cite[Theorem
  6.1.2]{da-prato.zabczyk:96:ergodicity}. We will content ourselves with an
  outline of the main ingredients. This will be split in several steps.

  \noindent \textit{Step 1:~Averaged measure.}\quad The solution $\{X_t;
  t\ge0\}$ to equation~\eqref{E:general} generates a Markov dynamics. We call
  $P_t(\xi,\cdot)$ the corresponding transition. Next for a Borel set $\Gamma\in
  H$ and $T>0$ we set
  \begin{align}\label{E:ave}
    R_T(\xi,\Gamma)=\frac{1}{T}\int_0^TP_t(\xi,\Gamma)\ud t.
  \end{align}
  According to a celebrated criteria from Krylov and Bogoliubov, there exists an
  invariant measure for $X_t$ as long as the family $\{R_{T}; T>0\}$ is tight.
  Let us recall that tightness of $R_{T}$ means that for every
  $\epsilon\in(0,1)$, one is able to find a compact set $K_\epsilon\subset H$
  such that
  \begin{align}\label{E:tight}
    R_T(\xi,K_\epsilon)\ge 1-\epsilon,\quad\mathrm{uniformly\ in}\ T.
  \end{align}
  Summarizing, relation~\eqref{E:tight} implies the existence of an invariant
  measure.

  A criterion like~\eqref{E:tight} is easy to verify when $H=\mathbb{R}^n$.
  Indeed, in that case the compact $K_\epsilon$ can be taken as a (closed) ball
  $B(0, r_\epsilon)$ for a proper radius $r_\epsilon>0$. In addition, a direct
  application of Markov's inequality entails that for any $p\ge1$
  \begin{align}\label{E:Markov-Ineq}
    \mathbb{P}\left(X_t\notin B(0,r_\epsilon)\right) \le \frac{\mathbb{E}\left[|X_t|^p\right]}{r^p_\epsilon},
  \end{align}
  so that~\eqref{E:tight} is easily implied by~\eqref{E:MomCond}. This is not
  true anymore when $H$ is an infinite dimensional functional space. Some
  additional ingredients are thus needed.

  \noindent \textit{Step 2:~Factorization method.} In order to solve the problem
  raised in Step 1, Da Prato and Zabczyk resort to one of their most emblematic
  and clever trick. Namely under assumption~\ref{it:dpz-setting-d}, some
  elementary fractional integral arguments show that the stochastic integral
  in~\eqref{E:G-mild} can be expressed as
  \begin{align}\label{E:Factor}
    \int_0^tS(t-s)B(X_s)\ud W_s=\frac{\sin(\alpha\pi)}{\pi}\int_0^t(t-s)^{\alpha-1}S(t-s)Y_s\ud s,
  \end{align}
  where the process $Y=Y^{(\alpha)}$ is the element of $L^2([0,T]; H)$ defined
  by
  \begin{align*}
    Y_s=\int_0^s(s-r)^{-\alpha}S(s-r)B(X_r)\ud W_r.
  \end{align*}
  Formula~\eqref{E:Factor} is a progress in the following sense: we have
  obtained a representation of stochastic convolutions as Lebesgue type
  integrals. This expression is more amenable to computations for regularity
  (and therefore compactness) estimates.

  \noindent \textit{Step 3:~Compactness.}\quad Recall that our goal is to
  establish relations~\eqref{E:tight}-\eqref{E:Markov-Ineq} for a family of
  compact sets $K_\epsilon$. This is achieved by writing the solution
  to~\eqref{E:general} at time $t=1$ in the following way (thanks to the
  factorization formula~\eqref{E:Factor}):
  \begin{align}\label{E:X1}
    X_1=S(1)\xi+G_1F(X)+G_\alpha Y,
  \end{align}
  where for $\alpha\in(0,1]$ we introduce an operator $G_\alpha: L^p([0,1];
  H)\to H$ by
  \begin{align*}
    G_\alpha f=\frac{\sin(\alpha\pi)}{\pi}\int_0^1(1-s)^{\alpha-1}S(1-s)f(s)\ud s.
  \end{align*}
  An important point in the proof is that whenever $S(t)$ is compact for $t>0$,
  $G_\alpha$ is also compact. Therefore whenever $\xi, F(X)$ and $Y$
  in~\eqref{E:X1} lye in proper balls in $H$, the random element $X_1$ belongs
  to a compact set. A full description of this family of sets $\{K_\epsilon;
  r>0\}$ stems from the decomposition~\eqref{E:X1}:
  \begin{align}\label{E:Def-Ke}
    \begin{aligned}
      K_\epsilon=\bigg\{  x\in H;\quad
      & x=S(1)\xi+G_1g+G_\alpha h\, ,\quad \text{with} \\
      & |\xi|\le \frac{1}{\epsilon},\quad
        |g|_{L^p([0,1]; H)}\le\frac{1}{\epsilon},\quad\text{and}\quad
        |h|_{L^p([0,1];H)}\le \frac{1}{\epsilon}\bigg\}.
    \end{aligned}
  \end{align}

  \noindent \textit{Step 4:~Conclusion.} The expression~\eqref{E:Def-Ke} allows
  to locate $X_1$ given by~\eqref{E:X1} in a compact set thanks to mere moment
  estimates. Furthermore, we have assumed in~\eqref{E:MomCond} that the moments
  of $X_t$ are uniformly bounded. Hence for $\epsilon>0$ and $K_\epsilon$
  defined by~\eqref{E:Def-Ke}, we end up with
  \begin{align*}
    \mathbb{P}_\xi(X_1\in K_\epsilon)\ge 1-C\epsilon^p(1+|\xi|^p).
  \end{align*}
  An easy Markov conditioning procedure allows then to conclude
  that~\eqref{E:tight} holds true whenever $\epsilon$ is small enough. This
  implies the existence of an invariant measure for $X_t$ as explained in
  Step~1.
\end{proof}

As the reader might see from the sketch above, Da Prato and Zabczyk's method is
a very elaborate combination of deep functional analysis insight, stochastic
calculus techniques and fundamental ergodic tools. It is difficult to overstate
Da Prato's pioneering role in this type of development. Let us add a couple of
remarks to close the section.

\begin{remark}
  Existence of an invariant measure is the most basic result one can get in
  ergodic theory. The extra ingredients put forward
  in~\cite{da-prato.zabczyk:96:ergodicity} in order to get uniqueness are
  essentially reduced to strong Feller properties for the transition $P_t$ and
  irreducibility. We will go back to those issues in Section~\ref{S:NS}.
  However, let us mention at this point that uniqueness of the invariant measure
  is closely related to irreducibility of the Markovian transition
  $P_{t}(\xi,\cdot)$ alluded to in the proof of Theorem~\ref{T:MomCond}. This
  irreducibility is ensured by nondegeneracy properties of the noise $W$ and the
  diffusion coefficient $B$ in~\eqref{E:G-mild}. A standard way to express this
  nondegeneracy when $B$ is constant is the following:
  \begin{equation}\label{E:a1}
    \im\left( S(t)  \right) \subset \im\big( Q_{t}^{1/2}  \big) \, ,
    \quad\text{where}\quad
    Q_{t}=\iot S(s) B\, S(s)^{*} \, \ud s \, .
  \end{equation}
\end{remark}

\begin{remark}
  For dissipative systems, that is when $A+F$ in~\eqref{E:general} can be
  considered as a damping coefficient, one can prove convergence of
  $\mathscr{L}(X_t)$ to an invariant measure by a direct analysis based on
  It\^{o} formula. This type of consideration is obtained, e.g.,
  in~\cite[Section 6.3 and 6.5]{da-prato.zabczyk:96:ergodicity}
\end{remark}

\begin{remark}
  As mentioned in the introduction, Da Prato and Zabczyk also investigate the
  ergodic behavior of specific stochastic systems of interest. Those include
  delayed systems, Burger's equation, spin systems, and the Navier-Stokes
  equation. Each case requires some serious modification of the general method.
\end{remark}

\section{A stochastic heat equation with reflection}\label{S:ReflectSHE}

We now start a series of deviations from the standard setting for ergodic
stochastic PDEs (as introduced in Section~\ref{S:Standard}), beginning with
equations involving reflections. Observe that stochastic heat equations with
reflection were firstly studied by Nualart and
Pardoux~\cite{nualart.pardoux:92:white} in a quasilinear setting, building upon
a deterministic framework that had previously received considerable attention
from various researchers; see~\cite{bensoussan.lions:78:applications} and
references therein. It is worth noting that reflected stochastic heat equations
can also arise as the scaling limit of microscopic models for random
interfaces~\cite{etheridge.labbe:15:scaling, funaki.olla:01:fluctuations}. As
far as ergodic properties are concerned, reflections induce an extra singularity
in the equation. In the current section, we will summarize an approximation
procedure allowing us to handle this singularity. For further exploration of the
topic, readers are referred to Zambotti's book~\cite{zambotti:17:random} and
survey paper~\cite[Section 6]{zambotti:21:brief}.

Stochastic PDEs with reflections can be handled in the abstract setting of
Section~\ref{S:Standard}. In this context, the underlying Hilbert space should
be $H = L^2 ([0,1])$. Nevertheless, since we wish to encompass Dirichlet
boundary conditions, we will consider a state space of the form
\begin{equation}\label{c1}
  H_0 \coloneqq \left\{\xi \in H ;\; \xi(0) = \xi(1) = 0\right\},
\end{equation}
equipped with the inner product inherited from $H$. In addition, the reflection
measure will constrain solutions to live in a family of sets $\{K_{\alpha}; \,
\alpha \ge 0\}$ defined by
\begin{align}\label{E:def-Kalpha}
  K_{\alpha} \coloneqq C([0,1]) \cap \left\{\xi \in H_0 ;\; \xi \ge - \alpha\right\}\, .
\end{align}
With this preliminary notation in mind, we now consider the following equation
driven by a Brownian sheet $W$:
\begin{align}\label{E:reflect}
  \ud X_t = \left[A X_t + F(X_t)\right] \ud t + \ud W_t +  \eta(\ud t).
\end{align}
In equation~\eqref{E:reflect}, the operator $A$ is defined as $A = \frac{1}{2}
\Delta$, and $F$ is assumed to be a mapping similar to that considered in
\eqref{E:StdF}, which in particular might take the form
\begin{align}\label{E:def-f-ref}
  \left[F(\xi)\right] (x) = f(x, \xi(x)), \quad \text{for all $\xi \in H$ and $x \in [0,1]$},
\end{align}
with a regular enough function $f: [0,1]\times \R \to \R$. As mentioned above,
$W$ is a Brownian sheet. This means, in the language of
Section~\ref{S:Standard}, that $W$ is a $Q$-Wiener process as
in~\eqref{E:Noise-STD} with $Q = \mathrm{Id}_H$. In the literature, the time
derivative of $W$ (interpreted as a generalized function) is also referred to as
space-time white noise.

It has been established in~\cite{nualart.pardoux:92:white} that under certain
mild conditions, for any initial condition $\xi \in K_\alpha$, the
equation~\eqref{E:reflect} reflected at $- \alpha \le 0$ admits a unique
solution. This solution is a pair $(X^{\alpha}, \eta^{\alpha})$ wherein
$X^{\alpha}$ is a $K_{\alpha}$-valued process adapted to $W$ starting at $\xi$,
and $\eta^{\alpha}$ is an adapted nonnegative random measure satisfying
\begin{align*}
  \int_{\R_+} \int_0^1 \big(\alpha + X_t^{\alpha}(x) \big) \, \eta^{\alpha} (\ud t, \ud x) = 0.
\end{align*}

In terms of ergodic results for~\eqref{E:reflect}, it is natural to wonder how
the singularity of the reflection is affecting invariant measures. Let us thus
say a few words about invariant measures for a system analogous
to~\eqref{E:reflect}, yet with no reflection. In other words, let us consider an
equation such as~\eqref{E:general} with $B = \mathrm{Id}$ and $F$ of the
form~\eqref{E:def-f-ref}. For any $\xi \in H_0$ (recall that $H_0$ is defined
by~\eqref{c1}), we set
\begin{align}\label{E:def-Gxi}
  G(\xi) \coloneqq \int_0^1 \ud x \int_0^{\xi (x)} f(x,y) \ud y.
\end{align}
Now we define an underlying measure $\mu$ on $H_0$ as the distribution of the
Brownian bridge. It is a well-known fact (see the proof of
Theorem~\ref{T:inv-reflect} below for more details about this assertion) that an
invariant measure $\mu^{G}$ for~\eqref{E:general} with $B = \mathrm{Id}$ is
given by
\begin{align}\label{E:def-inv-ou-g}
  \mu^G (\ud \xi) \coloneqq e^{2 G(x)} \mu (\ud x).
\end{align}
In his paper~\cite{zambotti:01:reflected}, Zambotti shows how the above
expansion for the invariant measure is perturbed by the reflection. The result
can be summarized as follows:

\begin{theorem}[Theorem~5 of~\cite{zambotti:01:reflected}]\label{T:inv-reflect}
  Let $B_t$ be the standard Brownian motion in $\R^3$. Let $\nu$ be the
  probability measure of the 3-$d$ Bessel bridge, namely, the law of
  $\left\{\left|B_t\right|\right\}_{t \in [0,1]}$ conditioned to $B_1 = 0$.  The
  set $K_{0}$ is introduced in \eqref{E:def-Kalpha} with $\al=0$. We consider a
  measure $\nu^{G}$ on $K_0$ defined as follows: for all $\xi \in K_0$,
  \begin{align}\label{E:inv-reflect}
    \nu^G (\ud \xi) \coloneqq e^{2 G(\xi)} \nu(\ud \xi),
  \end{align}
  where $G$ is given by~\eqref{E:def-Gxi}. Then, $\nu^G$ is an invariant measure
  for~\eqref{E:reflect} reflected at $0$.
\end{theorem}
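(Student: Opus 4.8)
The plan is to prove invariance by establishing the stronger property that the reflected dynamics is \emph{reversible} with respect to $\nu^G$, since reversibility immediately yields invariance. The whole argument rests on two pillars: an integration by parts (IBP) formula for the $3$-$d$ Bessel bridge measure $\nu$ on the convex set $K_0$, and the identification of the $\nu^G$-symmetric Markov process furnished by Dirichlet form theory with the Nualart--Pardoux solution of~\eqref{E:reflect}. The conceptual guide throughout is the classical fact that the $3$-$d$ Bessel bridge is the Brownian bridge \emph{conditioned to stay nonnegative}: since the event $\{\xi\ge 0\}$ has $\mu$-probability zero (a Brownian bridge changes sign infinitely often near its endpoints), one cannot merely restrict $\mu^G$ to $K_0$, and the correct object is precisely this singular conditioning, which produces $\nu$ in place of $\mu$.

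As a warm-up I would first recall why~\eqref{E:def-inv-ou-g} holds in the unreflected case. For $F=0$, equation~\eqref{E:general} with $B=\mathrm{Id}$ is an infinite dimensional Ornstein--Uhlenbeck process whose stationary Gaussian law has covariance $(-2A)^{-1}=(-\Delta)^{-1}$, i.e.\ the Dirichlet Green's function on $[0,1]$, which is exactly the covariance of the Brownian bridge $\mu$. Reversibility with respect to $\mu$ is then the Gaussian (Cameron--Martin) IBP formula. Adding the gradient-type drift $F$, whose potential is $G$ in~\eqref{E:def-Gxi}, tilts the reference measure to $\mu^G=e^{2G}\mu$: one checks that the generator $\tfrac12\Delta_H+\langle F,\nabla\rangle$ is symmetric in $L^2(\mu^G)$ by verifying that the Dirichlet form $\mathcal{E}(u,v)=\tfrac12\int\langle\nabla u,\nabla v\rangle\,\ud\mu^G$ is the form associated to the dynamics. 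This is the template I would replicate, with $\mu$ replaced by $\nu$.

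The heart of the matter is therefore the IBP formula for $\nu$. I would seek an identity of the schematic form
\begin{align*}
  \int_{K_0}\partial_h u\,\ud\nu
  = \int_{K_0} u\,\ell_h(\xi)\,\ud\nu + \int_0^1 h(r)\,\Sigma_r(\ud u),
\end{align*}
valid for smooth cylinder functions $u$ and directions $h$ vanishing at the endpoints, where $\ell_h$ is the linear functional coming from the Gaussian part and $\Sigma_r$ is a nonnegative boundary measure supported on the contact configurations $\{\xi(r)=0\}$. The appearance of this extra surface term, absent in the smooth Gaussian IBP, is exactly what materializes as the reflection measure $\eta$ in~\eqref{E:reflect}. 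To derive it rigorously I would pass to finite dimensional marginals: discretize $[0,1]$, represent the conditioned measure through a random-walk bridge pinned to stay nonnegative (a discrete Bessel bridge), perform the elementary IBP in $\R^n$ where the boundary term is a genuine surface integral over the faces $\{\xi_i=0\}$, and then pass to the continuum limit while controlling the convergence of the boundary contributions. The alternative I would keep in reserve is a penalization scheme: replace $\eta$ by a strong repulsive drift $\tfrac1\epsilon V'(X_t)$ with $V$ penalizing negative values, use that each penalized system is a gradient dynamics with explicit Gibbs invariant measure $e^{2G-\frac1\epsilon\widetilde V}\mu$, and let $\epsilon\downarrow 0$; here one must show simultaneously that these measures converge to $\nu^G$ and that the penalized SPDEs converge to the reflected one.

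With the IBP formula in hand, the remaining steps are comparatively standard. The form $\mathcal{E}(u,v)=\tfrac12\int_{K_0}\langle\nabla u,\nabla v\rangle\,\ud\nu^G$ on $L^2(K_0,\nu^G)$ is closable and Markovian, hence associated with a $\nu^G$-symmetric (thus reversible) Markov process, for which $\nu^G$ is automatically invariant. The final task is to identify this process with the solution of~\eqref{E:reflect}, by checking that it satisfies the reflected equation with the boundary measure $\Sigma$ playing the role of $\eta$, and then invoking uniqueness of the Nualart--Pardoux solution. I expect the single genuine obstacle to be the IBP formula for $\nu$: isolating the singular boundary term and proving that it coincides with the Nualart--Pardoux reflection measure is the delicate point, whereas the Gaussian computations, the abstract Dirichlet form machinery, and the uniqueness of the reflected SPDE are all by now classical.
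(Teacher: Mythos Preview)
Your primary route—establish an IBP formula for the $3$-$d$ Bessel bridge with a boundary term, build the gradient Dirichlet form on $L^2(\nu^G)$, and identify the associated symmetric process with the Nualart--Pardoux solution—is a valid strategy (it is essentially the programme carried out later in~\cite{zambotti:02:integration*1}), but it is not the argument the paper sketches. Following~\cite{zambotti:01:reflected}, the paper instead runs a \emph{double approximation}: one first shifts the barrier from $0$ to $-\alpha$ with $\alpha>0$, then penalizes by the drift $\epsilon^{-1}(\alpha+X)^-$. Each penalized system is a gradient perturbation of the OU process with explicit Gibbs invariant measure $Z_{\alpha,\epsilon}^{-1}\,e^{2G-2V_\alpha/\epsilon}\mu$; sending $\epsilon\downarrow 0$ produces the Brownian bridge conditioned on $K_\alpha$, which is an honest conditioning since $\mu(K_\alpha)=1-e^{-2\alpha^2}>0$. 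The delicate step is then $\alpha\downarrow 0$, and it is handled (Proposition~\ref{P:3d-bessel}) not by any IBP but by Biane's identity expressing the Brownian bridge as a uniformly time-shifted, vertically translated Bessel bridge, from which an explicit density calculation gives $\nu_\alpha\Rightarrow\nu$.

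Your backup penalization omits this $\alpha$-layer and penalizes directly at level $0$; the convergence of $e^{2G-\widetilde V/\epsilon}\mu$ to $\nu^G$ is then exactly the singular-conditioning difficulty you yourself flagged, and the paper's two-parameter limit is designed precisely to sidestep it. Conversely, your Dirichlet-form route buys reversibility outright (stronger than mere invariance) and is more structural, but it concentrates the hard analysis in the boundary term of the IBP and in the identification of the abstract quasi-regular Dirichlet-form process with the concrete SPDE solution—a step you list as routine but which in infinite dimensions is not entirely for free.
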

\begin{proof}
  Just as in Theorem~\ref{T:MomCond}, we only provide a brief overview of the
  proof strategy, directing interested readers to~\cite{zambotti:01:reflected}
  for a more detailed exposition.

  \noindent \textit{Step 1:~Approximating the equation.} Let $\alpha > 0$. For
  any $\epsilon > 0$, consider the following equation
  \begin{align}\label{E:reflect-appox}
    \ud X_t^{\alpha, \epsilon} = \left[ A  X_t^{\alpha, \epsilon} + F(X_t^{\alpha,\epsilon}) + \frac{(\alpha + X_t^{\alpha, \epsilon})^-}{\epsilon} \right] \ud t + \ud W_t,
  \end{align}
  with $r^- \coloneqq \max \{-r , 0\}$ for all $r \in \R$. Then, it is known
  that for any initial condition $X_0 = \xi \in H_0$, the
  equation~\eqref{E:reflect-appox} has a unique solution in $H_0$. Furthermore,
  assuming the initial condition $\xi \in K_{\alpha}$, the same monotonicity
  arguments as in~\cite{nualart.pardoux:92:white} assert that as $\epsilon
  \downarrow 0$, the following statements hold:

  \begin{enumerate}[wide, labelwidth=!, labelindent=0pt, label=\textbf{(\roman*)}]
    \setlength\itemsep{.1in}

    \item For any $(t,x) \in \R_+ \times [0,1]$, $X_t^{\alpha, \epsilon} (x)$
      increases to a random variable $X_t^{\alpha} (x)$ such that $X_t^{\alpha}
      = \left\{X_t^{\alpha} (x) ; x \in [0,1]\right\} \in K_{\alpha}$ for all $t
      > 0$;

    \item $\displaystyle \left\{\eta^{\alpha, \epsilon} (\ud t, \ud x) \coloneqq
      \epsilon^{-1}(\alpha + X_t^{\alpha, \epsilon} (x))^- \ud t \ud
    x\right\}_{\epsilon > 0}$ converge weakly to a nonnegative measure
    $\eta^{\alpha}$ on $\R_+ \times [0,1]$;

    \item The pair $(X^{\alpha}, \eta^{\alpha})$ is a solution
      to~\eqref{E:reflect} reflected at $-\alpha$.

  \end{enumerate}

  \noindent \textit{Step 2:~Invariant measure for the OU process.} Let $A$ be
  any infinitesimal generator of a strongly continuous semigroup on $H_0$ as in
  Section~\ref{SS:Abstract}. Then the \textit{Ornstein-Uhlenbeck process} (OU
  process), which is the solution to
  \begin{align*}
    \ud Z_t = A Z_t \ud t + \ud W_t \, ,
  \end{align*}
  has an invariant measure $\mu$, which is a centered Gaussian measure on $H_0$
  with covariance operator $\int_0^{\infty} e^{2 A t} \ud t = (-2A)^{-1}$. This
  fact had been proved in Da Prato-Zabczyk's monograph \cite[Theorem
  6.2.1]{da-prato.zabczyk:96:ergodicity}. As alluded to
  in~\eqref{E:def-inv-ou-g}, in case $A = \frac{1}{2} \Delta$, the Gaussian
  measure $\mu = \mathcal{N}\left(0, (- \Delta)^{-1}\right)$ on $H_0$,
  concentrated on $H_0 \cap C([0,1])$, coincides with the distribution of the
  1-$d$ Brownian bridge $\mathcal{B}(t) \stackrel{\mathrm(d)}{=} B_t - t B_1$
  for all $t \in [0,1]$ (where $B$ stands for a 1-$d$ Brownian motion). This can
  be seen by the following equality, valid for every pair $h ,k \in H_0$:
  \begin{align}\label{E:Bridge}
    \E \left[\langle \mathcal{B}, h \rangle \langle \mathcal{B}, k \rangle \right]
    = & \int_0^1 \ud t \int_0^1 \ud s \, \E \left[\left(B_s - s B_1\right) \left(B_t - t B_1 \right)\right] h(s) k(t) \nonumber \\
    = & \int_0^1 \ud t \int_0^1 \ud s \left(s \wedge t - st\right) h(s) k (t) = \int_0^1 (-\Delta)^{-1} h(t) k(t) \ud t.
  \end{align}
  Notice that in~\eqref{E:Bridge}, the last inequality can be briefly justified
  as follows: for every $h \in H_0$, the solution to the Dirichlet problem
  \begin{align*}
    \Delta g (t) = h(t), \quad \text{with boundary condition $g(0) = g(1) = 0$},
  \end{align*}
  can be written as
  \begin{align*}
    g(t) = - \int_0^1 \ud s \left(s \wedge t - st\right) h(s).
  \end{align*}

  \noindent \textit{Step 3:~Invariant measure for~\eqref{E:reflect-appox}.} We
  now justify the claim in~\eqref{E:def-inv-ou-g} in our context. Let $G$ be
  given as in~\eqref{E:def-Gxi}. It is easy to see that for any $\xi, \xi' \in
  H_0$, we have
  \begin{align*}
    \langle \nabla G(\xi), \xi' \rangle_{H_0}
    = \int_0^1 f(x, \xi(x)) \, \xi'(x) \, \ud x
    = \langle f(\cdot, \xi(\cdot)), \xi' \rangle_{H_0}.
  \end{align*}
  In the same way, if $V_{\alpha}$ is a mapping on $H_0$ defined by
  \begin{align*}
    V_{\alpha} (\xi) \coloneqq \frac{1}{2} \int_0^1 \left[\left(\alpha + \xi(x) \right)^-\right]^2 \ud x, \quad \text{for all $\xi \in H_0$},
  \end{align*}
  then it is readily checked that for all $\xi, \xi' \in H_0$, the following
  holds
  \begin{align*}
    \langle \nabla V_{\alpha}(\xi), \xi' \rangle_{H_0} = - \left\langle \left(\alpha + \xi(\cdot) \right)^-, \xi' \right\rangle_{H_0}.
  \end{align*}
  As a consequence,~\eqref{E:reflect-appox} can be written as the following
  gradient system:
  \begin{align*}
    \ud X_t^{\alpha, \epsilon} = \left[ A  X_t^{\alpha, \epsilon} + \nabla G(X_t^{\alpha,\epsilon}) - \frac{1}{\epsilon} \nabla V_{\alpha} (X_t^{\alpha, \epsilon}) \right] \ud t + \ud W_t.
  \end{align*}
  Therefore, it follows from~\cite[Section 8.6]{da-prato.zabczyk:96:ergodicity}
  that equation~\eqref{E:reflect-appox} has an invariant measure of the form
  \begin{align}\label{E:iv-reflect-alp-eps}
    \nu_{\alpha,\epsilon}^G (\ud \xi) \coloneqq \frac{1}{Z_{\alpha, \epsilon}} e^{2G(\xi) - (2V_{\alpha} (\xi)/\epsilon) } \mu(\ud \xi),
  \end{align}
  where, under some integrability conditions (not specified here for the sake of
  conciseness), the renormalization constant $Z_{\alpha,\epsilon}$ defined below
  is a finite positive number:
  \begin{align*}
    Z_{\alpha, \epsilon} \coloneqq \int_{H_0}  e^{2G(\xi) - (2V_{\alpha} (\xi)/\epsilon) } \mu (\ud \xi) \in (0,\infty).
  \end{align*}

  \noindent \textit{Step 4:~Convergence of $\nu^G_{\alpha, \epsilon}$.} Our
  claim~\eqref{E:inv-reflect} will be obtained by taking limits in both $\alpha$
  and $\epsilon$ in~\eqref{E:iv-reflect-alp-eps}. Let us start by taking
  $\epsilon \downarrow 0$. It follows that
  \begin{align}\label{E:eps-to-0-reflect}
    \lim_{\epsilon \downarrow 0} 2V_{\alpha}(\xi)/\epsilon =
    \begin{dcases}
      0,      & \text{if }\xi \in K_{\alpha}, \\
      \infty, & \text{if } \xi \in (H_0 \setminus K_{\alpha}) \cap C([0,1]).
    \end{dcases}
  \end{align}
  Furthermore, recall that $\mu$ is a Gaussian measure supported on $H_0 \cap
  C([0,1])$. Plugging relation~\eqref{E:eps-to-0-reflect}
  into~\eqref{E:iv-reflect-alp-eps}, it is thus not hard to see that the
  following limit holds true:
  \begin{align*}
    \nu_{\alpha,\epsilon}^G (\ud \xi) \to \nu_{\alpha}^G (\ud \xi) \coloneqq \frac{1}{Z_{\alpha}} e^{2G(\xi) } \nu_{\alpha}(\ud \xi), \quad \text{as $\epsilon \downarrow 0$},
  \end{align*}
  where the measure $\nu_{\alpha}$ and the normalization constant $Z_\alpha$ are
  defined by
  \begin{align}\label{def_nu-alp}
    \nu_{\alpha} (\ud \xi ) \coloneqq \frac{1}{\mu (K_{\alpha})}  \mathbf{1}_{K_{\alpha}} (\xi) \mu (\ud \xi) \quad \text{and} \quad
    Z_{\alpha} \coloneqq \int_{K_{\alpha}} e^{2G(\xi) } \nu_{\alpha}(\ud \xi).
  \end{align}
  The limit in $\alpha$ in relation~\eqref{def_nu-alp} is more involved. It is
  treated separately, and leads to Proposition~\ref{P:3d-bessel} below. This
  proposition asserts that $\nu_{\alpha}$ converges weakly to the law of a
  Bessel bridge. Together with relation~\eqref{def_nu-alp}, this concludes the
  proof of Theorem~\ref{T:inv-reflect}.
\end{proof}

\begin{remark}
  Recall that $\mu$ coincides with the distribution of the $1$-$d$ Brownian
  bridge. Thus by~\cite[Chapter III, Exercise (3.14)]{revuz.yor:91:continuous},
  $\mu (K_{\alpha})$ for $\alpha \ge 0$ can be explicitly computed as follows:
  \begin{align}\label{E:mu-k-alp}
    \mu (K_{\alpha}) = 1 - e^{-2\alpha^2}.
  \end{align}
  Hence $\mu (K_{\alpha})\sim 2\al^{2}$ as $\al\to 0$, which dictates the
  singularity of $\nu_{\al}$ in~\eqref{def_nu-alp}. Sorting out this singularity
  is the content of Proposition~\ref{P:3d-bessel}.
\end{remark}

\begin{proposition}\label{P:3d-bessel}
  The measures $\{\nu_{\alpha} (\ud \xi) ; \, \alpha > 0\}$ converge weakly to
  $\nu (\ud \xi)$, the law of the 3-$d$ Bessel bridge, as $\alpha \downarrow 0$.
\end{proposition}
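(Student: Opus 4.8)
The plan is to read the measures $\nu_\alpha$ as conditioned Brownian bridges and to establish the convergence by the standard scheme of tightness plus convergence of finite-dimensional distributions. Indeed, by~\eqref{def_nu-alp} the measure $\nu_\alpha$ is exactly the law of the $1$-$d$ Brownian bridge $\mu$ conditioned on the event $K_\alpha=\{\min_{x\in[0,1]}\xi(x)\ge-\alpha\}$ of~\eqref{E:def-Kalpha}, while the $3$-$d$ Bessel bridge $\nu$ is classically known to be the law of the Brownian bridge conditioned to stay nonnegative, in the precise sense of a Doob $h$-transform with $h(y)=y$. The content of the statement is therefore that conditioning on $\{\xi\ge-\alpha\}$ and letting $\alpha\downarrow0$ produces exactly this $h$-transform. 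Since $\nu$ is supported on $C([0,1])$, it suffices to prove (i) that the family $\{\nu_\alpha;\,\alpha>0\}$ is tight in $C([0,1])$ and (ii) that the finite-dimensional distributions of $\nu_\alpha$ converge to those of $\nu$; Prokhorov's theorem then yields the weak convergence.

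First I would compute the finite-dimensional distributions. Fix times $0<t_1<\cdots<t_n<1$ and a bounded continuous test function. By the Markov property of the Brownian bridge, conditionally on the values $\xi(t_i)=x_i$ the event $K_\alpha$ factorizes into independent ``stay above $-\alpha$'' events on the consecutive intervals $[0,t_1],[t_1,t_2],\dots,[t_n,1]$, and for a pinned bridge running from $a$ to $b$ over a time interval of length $\tau$ the reflection principle gives the explicit probability $1-\exp(-2(a+\alpha)(b+\alpha)/\tau)$ of remaining above $-\alpha$. The two boundary intervals, pinned at $0$ at the endpoints $t=0$ and $t=1$, contribute the factors $1-\exp(-2\alpha(x_1+\alpha)/t_1)\sim 2\alpha x_1/t_1$ and $1-\exp(-2\alpha(x_n+\alpha)/(1-t_n))\sim 2\alpha x_n/(1-t_n)$, which carry the entire $\alpha^2$ singularity, whereas each interior interval contributes $1-\exp(-2x_ix_{i+1}/(t_{i+1}-t_i))$ in the limit. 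Dividing by $\mu(K_\alpha)\sim2\alpha^2$ from~\eqref{E:mu-k-alp} cancels the two powers of $\alpha$ and leaves, after dominated convergence, an explicit density on $(0,\infty)^n$ integrated against the Brownian bridge transition kernel. A direct check matches this with the finite-dimensional law of the $3$-$d$ Bessel bridge; for example, when $n=1$ one finds the density $\tfrac{2x^2}{(t(1-t))^{3/2}\sqrt{2\pi}}\,e^{-x^2/(2t(1-t))}$, which is precisely the law of $|b_t|$ for a $3$-$d$ Brownian bridge $b$.

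The main obstacle is tightness uniform in $\alpha$. The crude bound $\E_{\nu_\alpha}[|\xi(t)-\xi(s)|^4]\le\mu(K_\alpha)^{-1}\,\E_\mu[|\xi(t)-\xi(s)|^4]$ is useless because $\mu(K_\alpha)\to0$, so one must genuinely use the conditioning. The point is that the singular part of the conditioning concentrates near the two endpoints $0$ and $1$: away from them the conditioned bridge stays comparable to the free one. I would therefore establish a uniform Kolmogorov estimate of the form $\E_{\nu_\alpha}[|\xi(t)-\xi(s)|^p]\le C|t-s|^{1+\delta}$ directly from the explicit conditional transition densities---the reflection kernels appearing in the previous step, which converge as $\alpha\downarrow0$ to the transition densities of the Bessel$(3)$ bridge. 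These kernels degenerate exactly at $0$ and $1$, so the estimate near the endpoints must be extracted from the sharp small-$\alpha$ asymptotics rather than crude bounds; controlling this endpoint degeneration uniformly in $\alpha$ is the technical heart of the argument, and the same sharp asymptotics are what justify the dominated-convergence step in the finite-dimensional computation above. Once this uniform modulus of continuity is secured, tightness follows from the classical criterion, and together with the finite-dimensional convergence it completes the proof.
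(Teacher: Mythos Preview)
Your proposal is a sound and classical route---essentially the Durrett--Iglehart--Miller scheme that the paper itself cites as the original proof---but it differs substantially from the argument the paper actually gives. The paper bypasses the tightness step entirely by invoking Biane's representation (Theorem~\ref{T:biane}): if $e$ is a $3$-$d$ Bessel bridge and $\zeta$ is uniform on $[0,1]$ independent of $e$, then $\beta_\tau=e_{\tau\oplus\zeta}-e_\zeta$ is a Brownian bridge. This lets one rewrite $\int\varphi\,\ud\nu_\alpha$ directly as an integral against $\nu$, using the key observation that for $\xi\in K_0$ one has $\xi_{\cdot\oplus r}-\xi_r\ge-\alpha\iff\xi_r\le\alpha$. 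The limit $\alpha\downarrow0$ is then taken inside this integral by conditioning on $\xi_r=y$ and exploiting the explicit Bessel-bridge marginal density~\eqref{E:bessel-density}; no separate tightness argument is needed because one works with a generic $\varphi\in C_b(H_0)$ throughout.

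What each approach buys: yours is more elementary in that it uses only the reflection principle and Prokhorov, and the finite-dimensional computation you sketch is clean and correct (your $n=1$ density matches~\eqref{E:bessel-density} exactly). Its cost is precisely the endpoint tightness you flag as ``the technical heart,'' which you leave as a programme rather than an argument; this is where the real work in your scheme lies, and your proposal does not yet contain it. The paper's approach trades that analytic estimate for a single structural identity (Biane's theorem), which makes the whole limit a matter of dominated convergence against the \emph{target} measure $\nu$ rather than a compactness argument on the approximating family.
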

\begin{proof}
  Proposition~\ref{P:3d-bessel} had already been proved
  in~\cite{durrett.iglehart.ea:77:weak}. Here, we provide an alternative proof
  as presented in~\cite{zambotti:01:reflected}. The proof is based on the next
  theorem quoted from~\cite{biane:86:relations}.

  \begin{theorem}[{\cite[Theorem 1]{biane:86:relations}}]\label{T:biane}
    Let $e = \{e_{\tau}; \tau \in [0,1]\}$ be a 3-$d$ Bessel bridge, and let
    $\zeta$ be a random variable uniformly distributed on $[0,1]$ and
    independent of $e$. Consider a process $\beta = \{\beta_{\tau} ; \tau \in
    [0,1]\}$ defined by
    \begin{align*}
      \beta_{\tau} \coloneqq e_{\tau \oplus \zeta} - e_{\zeta}, \quad \text{with $\tau \oplus \zeta \coloneqq \tau + \zeta\; ({\rm mod}\ 1)$.}
    \end{align*}
    Then $\beta $ is a 1-$d$ Brownian bridge.
  \end{theorem}

  Let us now go back to the analysis for $\nu_{\alpha}$. Thanks
  to~\eqref{def_nu-alp}, one can write, that for a generic $\varphi \in
  C_b(H_0)$,
  \begin{align*}
    \int_{H_0} \varphi(\xi) \nu_{\alpha} (\ud \xi) = \frac{1}{\mu(K_{\alpha})} \int_{K_{\alpha}} \varphi (\xi) \mu (\ud \xi),
  \end{align*}
  where we recall that $\mu$ is the distribution of the $1$-$d$ Brownian bridge.
  Now we apply Theorem~\ref{T:biane} in order to express $\mu$ in terms of the
  distribution $\nu$ of the $3$-$d$ Bessel bridge. We get
  \begin{align}\label{E:int-nu-alpha-0}
    \int_{H_0} \varphi(\xi) \nu_{\alpha} (\ud \xi)
     = & \frac{1}{\mu (K_{\alpha})}\int_0^1\ud r\int_{K_0}\varphi (\xi_{\cdot \oplus r} - \xi_{r} ) \mathbf{1}_{K_{\alpha}} (\xi_{\cdot + r} - \xi_r) \, \nu (\ud \xi).
  \end{align}
  In addition, if $\xi \in K_0$, where $K_0$ is defined as
  in~\eqref{E:def-Kalpha} with $\alpha = 0$, we have $\xi \ge 0$ and $\xi(0) =
  \xi(1) = 0$. Therefore, for any $r\in\ou$ we have
  \begin{align*}
    \xi_{\cdot \oplus r} - \xi_{r} \ge - \alpha \iff \xi_{\cdot} - \xi_r \ge - \alpha \iff \xi_r \le \alpha + \xi_{\cdot} \iff \xi_r \le \alpha.
  \end{align*}
  Plugging this elementary relation into~\eqref{E:int-nu-alpha-0}, we obtain
  \begin{align}\label{E:int-nu-alpha-00}
    \int_{H_0} \varphi(\xi) \nu_{\alpha} (\ud \xi)
     = & \frac{1}{\mu (K_{\alpha})}\int_0^1\ud r\int_{K_0}\varphi (\xi_{\cdot \oplus r} - \xi_{r}) \mathbf{1}_{[0,\alpha]} (\xi_r)
     \, \nu (\ud \xi) .
  \end{align}
  We now proceed by splitting the right hand side of~\eqref{E:int-nu-alpha-00}
  in two terms:
  \begin{align}\label{E:int-nu-alpha}
    \int_{H_0} \varphi(\xi) \nu_{\alpha} (\ud \xi) = \frac{1}{\mu (K_{\alpha})} \big(I_1 (\alpha) + I_2 (\alpha)\big),
  \end{align}
  where we recall from~\eqref{E:mu-k-alp} that $\mu (K_{\alpha}) = 1 -
  e^{-2\alpha^2}$, and where
  \begin{eqnarray}\label{E:int-nu-alpha-i1}
    I_1(\alpha) & \coloneqq & \int_0^{1/2}\ud r\int_{K_0}\varphi (\xi_{\cdot \oplus r} - \xi_{r}) \mathbf{1}_{[0,\alpha]} (\xi_r) \nu (\ud \xi), \\
    I_2(\alpha) & \coloneqq & \int_{1/2}^1\ud r\int_{K_0}\varphi (\xi_{\cdot \oplus r} - \xi_{r}) \mathbf{1}_{[0,\alpha]} (\xi_r) \nu (\ud \xi).
    \label{E:int-nu-alpha-i2}
  \end{eqnarray}
  In the sequel, we will only handle the term $I_1(\alpha)$, while the other
  term $I_2(\alpha)$ can be treated exactly along the same lines. The approach
  to addressing $I_1(\alpha)$ involves conditioning the integration in
  $I_1(\alpha)$ to the values of $\xi_{r}$. To this aim, we recall that
  classical considerations on Bessel bridge show that the density of $\xi_r$ is
  of the form
  \begin{align}\label{E:bessel-density}
    \nu \left(\xi_r \in \ud y\right) = g_{r(1-r)}(y) \ud y, \quad \text{with} \;
    g_{\tau} (y) \coloneqq \sqrt{\frac{2}{\pi \tau^3}} \, y^2 \exp \left(-\frac{y^2}{2 \tau}\right), \ \tau > 0,\: y \ge 0.
  \end{align}
  Denoting by $\delta_{\varphi} (r, y)$ the integration of $\varphi$ with
  respect to $\nu$ conditional on $\xi_r = y$, we end up with
  \begin{align*}
    I_1(\alpha) = \int_0^{1/2}\ud r\int_0^{\alpha} \ud y \, g_{r(1-r)} (y) \delta_{\varphi} (r,y).
  \end{align*}
  Recalling~\eqref{E:bessel-density}, by preforming a change of variable
  $y/\sqrt{r(1 - r)} \mapsto y$, we can deduce that
  \begin{align*}
    I_1(\alpha) = \int_0^{1/2}\ud r\int_0^{\frac{\alpha}{r (1 - r)}} \ud y \, g_{1} (y) \, \delta_{\varphi} \left(r, [r (1 - r)]^{1/2}y\right).
  \end{align*}
  Moreover, $r(1 - r) \le 1/4$ for every $r \in [0,1/2]$. Some elementary
  computations thus imply a new decomposition for $I_1 (\alpha)$ of the form
  \begin{align}\label{E:I1}
    I_1 (\alpha) = I_{1,1}(\alpha) + I_{1,2}(\alpha)
  \end{align}
  where
  \begin{eqnarray}\label{E:I11}
    I_{1,1}(\alpha)  & \coloneqq & \int_0^{1/2}\ud r\int_0^{2\alpha} \ud y\; g_{1} (y) \delta_{\varphi} \left(r, [r(1-r)]^{1/2}y\right), \\
    I_{1,2} (\alpha) & \coloneqq & \int_{2\alpha}^{\infty} \ud y\; g_1 (y) \int_0^{\rho(\alpha, y)}\ud r\; \delta_{\varphi} \left(r, [r(1-r)]^{1/2} y\right),
    \label{E:I12}
  \end{eqnarray}
  and the function $\rho$ being defined as
  \begin{align}\label{E:Rho}
    \rho(\alpha, y) \coloneqq \frac{1}{2} \left(1 - \sqrt{1 - (2\alpha/y)^2}\right).
  \end{align}
  In particular in the term $I_{1,2}(\alpha)$ of the previous decomposition, we
  have exchanged the integration order in $r$ and $y$.

  Let us analyze the term $ I_{1,1}(\alpha)$ in~\eqref{E:I11}. In view
  of~\eqref{E:mu-k-alp} and $g_1(y)$ as defined in~\eqref{E:bessel-density} with
  $\tau = 1$, some easy considerations on definite integrals yield
  \begin{align}\label{E:lim-i11-alp}
      \lim_{\alpha \downarrow 0} \frac{I_{1,1} (\alpha)}{\mu(K_{\alpha})}
    = \lim_{\alpha \downarrow 0} \frac{\sqrt{2/\pi}}{1 - e^{- 2 \alpha^2}} \int_0^{2\alpha} \ud y \, y^2 e^{-y^2/2} \int_0^{1/2}\ud r \delta_{\varphi} \left(r, [r(1-r)]^{1/2}y\right)
    = 0.
  \end{align}
  As far as the term $I_{1,2}(\alpha)$ in~\eqref{E:I12} is concerned, it still
  deserves a detailed analysis for which we can refer
  to~\cite{zambotti:01:reflected}. Let us just mention that, since $\rho
  (\alpha, y)$ as spelled out in~\eqref{E:Rho} is approximately $(\alpha/y)^2$
  whenever $\alpha \to 0$, then
  \begin{align}\label{E:lim-i12-alp}
      \lim_{\alpha\downarrow 0} \frac{I_{1,2} (\alpha)}{\mu (K_{\alpha})}
    = & \lim_{\alpha \downarrow 0} \frac{\sqrt{2/\pi}}{1-e^{-2\alpha^2}} \int_{2\alpha}^{\infty} \ud y\; y^2 e^{-y^2/2} \int_0^{(\alpha/y)^2}\ud r\; \delta_{\varphi} \left(r, [r(1-r)]^{1/2}y\right) \nonumber                                                  \\
    = & \lim_{\alpha \downarrow 0} \left(\frac{\sqrt{2/\pi}}{2\alpha^2} \alpha^2 \int_{2\alpha}^{\infty} \ud y\; e^{-y^2/2} (y/\alpha)^2 \int_0^{(\alpha/y)^2}\ud r\right) \times \lim_{r\downarrow 0} \delta_{\varphi} \big(r, [r(1-r)]^{1/2}y\big) \nonumber \\
    = & \frac{1}{2} \int_{K_0} \varphi (\xi) \nu (\ud \xi),
  \end{align}
  where we recall that $\nu$ is the law of the $3$-$d$ Bessel bridge. The last
  identity in relation~\eqref{E:lim-i12-alp} stems from the fact that
  \begin{align*}
      \lim_{r\downarrow 0} \delta_{\varphi} \left(r, [r(1-r)]^{1/2} y\right)
    = \lim_{r \uparrow 1 } \delta_{\varphi} \left(r, [r(1-r)]^{1/2} y\right)
    = \int_{K_0} \varphi(\xi) \nu(\ud \xi).
  \end{align*}
  The detailed proof the aforementioned result can be found in~\cite[Lemma
  6]{zambotti:01:reflected}. Intuitively, this result arises because as $r
  \downarrow 0$ or $\uparrow 1$, the conditioning event $\{\xi_r = \left[r(1 -
  r)\right]^{1/2} y\}$ approaches the entire sample space.

  Summarizing our considerations, plugging~\eqref{E:lim-i11-alp}
  and~\eqref{E:lim-i12-alp} into~\eqref{E:I1}, we find that
  \begin{align}\label{E:int-nu-alpha-i2-fnl}
    \lim_{\alpha \downarrow 0} \frac{I_1(\alpha)}{\mu(K_{\alpha})}
    = \frac{1}{2} \int_{K_0} \varphi (\xi) \nu (\ud \xi).
  \end{align}
  As mentioned above, $I_2(\alpha)$ in~\eqref{E:int-nu-alpha-i2} can be handled
  in the same way. Reporting~\eqref{E:int-nu-alpha-i2-fnl} into the
  decomposition~\eqref{E:int-nu-alpha}, this concludes the proof of
  Theorem~\ref{T:inv-reflect}.
\end{proof}

\begin{remark}
  As seen from our sketch of the proof for Theorem~\ref{T:inv-reflect}, the
  identification of an invariant measure for the stochastic heat equation
  reflected at $0$ hinges upon several key components. First, it relies on
  identifying an invariant measure for the Ornstein-Uhlenbeck process and
  associated gradient systems (this step is borrowed again from Da
  Prato-Zabczyk's series of books, see~\cite[Theorem 6.2.1 and Section
  8.6]{da-prato.zabczyk:96:ergodicity}). Additionally, the relationship between
  the 1-$d$ Brownian bridge conditioning on nonnegative sample paths and the 3-d
  Bessel bridge (see Proposition~\ref{P:3d-bessel}) plays a crucial role, which
  provides a nice representation of the invariant measure as in
  Theorem~\ref{T:inv-reflect}. The invariant measure presented in
  Theorem~\ref{T:inv-reflect} is unique, which can be deduced by comparing the
  asymptotic behaviors of $X_t^{\xi_1}$ and $X_t^{\xi_2}$, which are solutions
  to~\eqref{E:reflect} reflected at $0$ with initial datum $\xi_1$ and $\xi_2$
  respectively. While Zambotti's original work~\cite{zambotti:01:reflected} does
  not explicitly state this uniqueness, it was later affirmed in a broader
  context by Yang and Zhang~\cite[Theorem 2.2]{yang.zhang:14:existence} by
  adopting a coupling method from Mueller~\cite{mueller:93:coupling}. Regarding
  the equation~\eqref{E:reflect}, Zambotti and his collaborators have conducted
  additional studies, such as deriving an integration by parts formula with
  respect to the $3$-$d$ Bessel bridge in~\cite{zambotti:02:integration*1}, and
  exploring fine properties of the contact set $\{(t,x) ; u(t,x) = 0 \}$
  in~\cite{dalang.mueller.ea:06:hitting, zambotti:04:occupation}. In addition to
  the aforementioned works, further investigations into ergodicity and invariant
  measures for stochastic PDEs with reflection have been pursued in various
  scholarly articles, see, e.g., \cite{kalsi:20:existence, otobe:04:invariant,
  zhang:12:large}.
\end{remark}

\section{Ergodicity with degenerate noise}\label{S:NS}

We will now examine a second departure from the Da Prato-Zabczyk standard
setting, which concerns degenerate situations for the noise. Namely we have
mentioned that uniqueness and ergodicity of the invariant measure occurs under
the non-degeneracy assumption~\eqref{E:a1}. In particular, this implies that $W$
should be truly infinite-dimensional (or otherwise stated Rank$(BB^*)=\infty$).

In this section we will describe a celebrated result by Hairer and
Mattingly~\cite{hairer.mattingly:06:ergodicity} in which ergodicity is achieved
in spite of a finite-dimensional noise. The result
in~\cite{hairer.mattingly:06:ergodicity} is proved for stochastic Navier-Stokes
equations, which are well-known systems describing the time evolution of an
incompressible fluid. We consider here a version of the equation for
$x\in\mathbb{R}^2,$ written with the notation of Section~\ref{S:Standard} as
\begin{align}\label{E:NS-Eq}
  \ud u_t+\left(u_t\cdot\nabla\right)u_t = \nu\triangle u_t-\nabla p_t + \xi_t,\quad \mathrm{div}\, u = 0,
\end{align}
where $u_t(x)\in\mathbb{R}^2$ denotes the value of the velocity field at time
$t$ and position $x$, $p_t(x)$ denotes the pressure, and $\xi_t(x)$ is an
external force field acting on the fluid. In the stochastic setting the driving
force $\xi$ is a Gaussian field which is white in time and colored in space.

The existence of an invariant measure for the stochastic PDE~\eqref{E:NS-Eq} can
be proved by some ``soft'' techniques using the regularization and dissipativity
properties of the flow~\cite{cruzeiro:89:solutions, flandoli:94:dissipativity}.
However, showing the uniqueness of the invariant measure is a challenging
problem for at least two reasons:
\begin{enumerate}[wide, labelwidth=!, labelindent=0pt, label=\textbf{(\roman*)}]
  \setlength\itemsep{.05in}

  \item The proof of uniqueness requires a detailed analysis of the
    nonlinearity.

  \item In fact the nonlinearity in~\eqref{E:NS-Eq} can balance the degeneracy
    of $W$ and produces the unique invariant measure. It is thus really at the
    heart of the analysis.

\end{enumerate}
This section is devoted to a brief explanation of those mechanisms. It is
structured as follows:  we will first fix some notation and state the main
result of~\cite{hairer.mattingly:06:ergodicity} in Section~\ref{SS:NS-Setup}.
Section~\ref{SS:NS-Idea} will then be devoted to a brief explanation of the main
idea in the proof.

\subsection{Setup and ergodicity of Navier-Stokes equations}\label{SS:NS-Setup}

the framework used in~\cite{hairer.mattingly:06:ergodicity} is the so-called
vorticity formulation of the Navier-Stokes equation, which will yield a
formulation close to~\eqref{E:general}. Consider~\eqref{E:NS-Eq} on the torus
$\mathbb{T}^2 \coloneqq [-\pi,\pi]^2$ driven by a Gaussian noise $\xi$.  For a
divergence-free velocity field, the vorticity $X$ is defined by $X \coloneqq
\nabla\wedge u = \partial_2 u_1 - \partial_1 u_2$. Note that the velocity and
vorticity formulations are equivalent since $u$ can be recovered from $X$ and
the condition $\nabla\cdot u = 0$. With this notation it can be proved that the
vorticity formulation for the stochastic Navier-Stokes equation is
\begin{align}\label{E:NS-vort}
  \ud X_t = \nu\Delta X_t \, \ud t+B(X,X) \,\ud t+Q \, \ud W_t, \qquad
  X_{0}   = x_{0} \, ,
\end{align}
where $\Delta$ is the Laplacian with periodic boundary conditions and $B(u,w) =
-(u\cdot\nabla) w$ is the usual Navier-Stokes nonlinearity. In what follows, we
will focus on the vorticity formulation of the problem given by
equation~\eqref{E:NS-vort}. Our state space for $X$ will be $H =
L^2_0(\mathbb{T}^2),$ the space of real-valued square-integrable functions on
the torus with vanishing mean.

We now turn to a description of the finite dimensional noise $Q\ud W_t$
in~\eqref{E:NS-vort}. To this aim, we start by introducing a convenient way to
index the Fourier basis of $H$. Namely we write
$\mathbb{Z}^2\backslash\{(0,0)\}=\mathbb{Z}_{+}^2\cup\mathbb{Z}_{-}^2$, where
\begin{align*}
  \mathbb{Z}_-^2 & \coloneqq \left\{(k_1,k_2)\in\mathbb{Z}^2 | -k\in \mathbb{Z}_+^2\right\} \quad\text{and}\quad \\
  \mathbb{Z}_+^2 & \coloneqq \left\{(k_1,k_2)\in\mathbb{Z}^2 | k_2>0\right\}\cup\left\{ (k_1,0)\in\mathbb{Z}^2 | k_1>0\right\}.
\end{align*}
Our set of oscillatory functions is defined in the following way for all
$k\in\mathbb{Z}^2\backslash\{(0,0)\}$: for all $x\in\mathbb{T}^2$ we set
\begin{align}\label{D:f_k}
  f_k(x)= \begin{dcases}
    \sin(k\cdot x), & \mathrm{if}\ k\in\mathbb{Z}^2_+; \\
    \cos(k\cdot x), & \mathrm{if}\ k\in\mathbb{Z}^2_-.
  \end{dcases}
\end{align}
For the remainder of the section, we also fix a set
\begin{align}\label{E:Z0}
  \mathcal{Z}_0=\{k_n;  n=1,\ldots,m\}\subset \mathbb{Z}^2\backslash\{(0,0)\},
\end{align}
which corresponds to the set of driving modes of equation~\eqref{E:NS-vort}. We
are now ready to introduce our finite dimensional noise.

\begin{definition}\label{D:Noise}
  In equation~\eqref{E:NS-vort}, the finite dimensional noise $QW_t$ is defined
  as follows: we start from a $\mathbb{R}^m$-valued Wiener process $W$ defined
  on a probability space $(\Omega, \mathcal{F},\mathbb{P})$. Recall that $f_k$
  is given in~\eqref{D:f_k} and denote by $\{e_n; n=1,\ldots,m\}$ the standard
  basis of $\mathbb{R}^m$.  Define $Q:\mathbb{R}^m\to{H}$ by $Qe_n = q_n
  f_{k_n}$, where the $q_n$ are some strictly positive numbers, and the wave
  numbers $k_n$ are given by the set $\mathcal{Z}_0$. Then for $t \ge 0$ and $x
  \in \mathbb{T}^2$ we have
  \begin{align}\label{E:QW}
    QW_t(x)=\sum_{n=1}^m q_nW^n_tf_{k_n}(x).
  \end{align}
  Notice that $QW_t$ is an element of $H=L^2_0(\mathbb{T}^2)$.
\end{definition}
The main result of~\cite{hairer.mattingly:06:ergodicity}, achieving uniqueness
for the invariant measure in spite of having a finite-dimensional noise
in~\eqref{E:QW}, can now be summarized as follows.

\begin{theorem}\label{T:NS}
  Let $\mathcal{Z}_0$ satisfy the following assumptions:
  \begin{itemize}

    \item[\bf{A1.}] There exist at least two elements in $\mathcal{Z}_0$ with
      different Euclidean norms.

    \item[\bf{A2.}] The set of integer linear combinations of elements of
      $\mathcal{Z}_0$ generates $\mathbb{Z}^2$.

  \end{itemize}
  Then equation~\eqref{E:NS-vort} has a unique invariant measure on ${H}$.
\end{theorem}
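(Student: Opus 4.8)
The plan is to follow the by-now standard split between the (soft) existence of an invariant measure and the (hard) uniqueness, obtaining uniqueness through the \emph{asymptotic strong Feller} (ASF) property tailored to this problem. The abstract principle I would invoke is that if the Markov semigroup $\{P_t; t\ge 0\}$ generated by~\eqref{E:NS-vort} is asymptotically strong Feller and if a single point $x_\star\in H$ lies in the topological support of every invariant measure, then the invariant measure is unique. Existence itself is the soft part: the $L^2$ energy balance for~\eqref{E:NS-vort}, together with the smoothing of the linear semigroup $e^{\nu t\Delta}$ and the compact Sobolev embeddings on $\TT^2$, gives a uniform-in-time moment bound and tightness of the time-averaged laws, so that Krylov--Bogoliubov (exactly as in Step~1 of the proof of Theorem~\ref{T:MomCond}) yields at least one invariant measure. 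The entire difficulty is therefore concentrated in the two ingredients of the abstract criterion.

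For the support ingredient I would take $x_\star = 0$ and show that $0$ belongs to the support of \emph{every} invariant measure. This rests on two facts. First, the deterministic equation with zero forcing is globally dissipative, so every trajectory enters and remains in a fixed ball of $H$. Second, an approximate controllability statement: by suitably choosing the (finite-dimensional) noise path one can steer $X_t$ into an arbitrarily small neighborhood of $0$ with positive probability. Combining controllability with the Markov property shows that the transition kernels charge every neighborhood of $0$, whence $0\in\operatorname{supp}\mu$ for any invariant $\mu$.

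The heart of the matter is the ASF property, which I would establish through a gradient bound of the form
\[
  \|\nabla P_t\varphi(x)\|_H \le C(x)\big(\|\varphi\|_\infty + \delta_t\,\|\nabla\varphi\|_\infty\big), \qquad \delta_t \xrightarrow[t\to\infty]{} 0,
\]
valid for all bounded Fr\'echet-differentiable $\varphi$. Writing the directional derivative as $\langle \nabla P_t\varphi(x), \xi\rangle = \E\big[\langle \nabla\varphi(X_t), J_{0,t}\xi\rangle\big]$, where $J_{0,t}$ is the linearization (Jacobian) of the flow, the goal is an \emph{approximate} Malliavin integration by parts. One constructs a predictable control $v$ so that the Malliavin derivative $\mathcal{A}_{0,t}v \coloneqq \iot J_{s,t}Q\,v_s\,\ud s$ mimics $J_{0,t}\xi$ up to a residual $\rho_t \coloneqq J_{0,t}\xi - \mathcal{A}_{0,t}v$ whose norm tends to $0$ in $L^2(\Omega)$. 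Integrating by parts in the $\mathcal{A}_{0,t}v$ term then removes the derivative on $\varphi$ at the cost of a Malliavin weight, while the residual produces the small $\delta_t\|\nabla\varphi\|_\infty$ contribution. Producing such a $v$ with controlled energy requires inverting a regularization of the Malliavin matrix $\mathcal{M}_t \coloneqq \mathcal{A}_{0,t}\mathcal{A}_{0,t}^*$, together with an a priori estimate showing that $J_{0,t}$ contracts on high Fourier modes so that only the finitely many low modes must be corrected by $v$.

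The main obstacle is precisely this gradient estimate, and within it the nondegeneracy of $\mathcal{M}_t$, which is where Assumptions A1 and A2 enter through a H\"ormander-type computation. The constant forcing directions $f_{k_n}$, bracketed iteratively against the quadratic nonlinearity $B$, generate new Fourier modes of the form $f_{k\pm l}$; Assumption A2 guarantees that the integer span of $\mathcal{Z}_0$ eventually fills $\ZZ^2$, so that all modes are reached, while Assumption A1 (two modes of different Euclidean length) is what prevents the relevant bracket coefficients from vanishing identically. Converting this algebraic spanning statement into a quantitative lower bound on the smallest eigenvalue of $\mathcal{M}_t$, with moment estimates on $J_{0,t}$ sharp enough to keep both the residual $\rho_t$ and the Malliavin weight integrable, is the technically demanding core of the argument; once it is in hand, the existence, the controllability, and the final combination through the ASF criterion are comparatively routine.
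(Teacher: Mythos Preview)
Your proposal follows essentially the same route as the paper's sketch: existence by soft energy/compactness arguments and Krylov--Bogoliubov, uniqueness via the asymptotic strong Feller property established through an approximate Malliavin integration by parts, with the H\"ormander-type bracket computation on the quadratic nonlinearity explaining the role of Assumptions~A1 and~A2. You are in fact slightly more explicit than the paper in spelling out the second ingredient of the abstract uniqueness criterion (a common support point, taken to be $0$, for every invariant measure), which the paper only alludes to in passing.

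One technical point deserves correction. You write that one constructs a \emph{predictable} control $v$. In the degenerate setting this is precisely what one cannot do: the control $v$ produced in the Hairer--Mattingly argument is genuinely \emph{non-adapted}, and the stochastic integral $\int_0^t v(s)\,\ud W_s$ appearing in the approximate integration by parts must be interpreted as a Skorohod integral, not an It\^o integral. This is not cosmetic---it is the reason the moment bounds on the Malliavin weight require the full Malliavin-matrix analysis rather than a Girsanov-type argument, and it is singled out explicitly in the paper (Remark~\ref{R:v}). Apart from this slip, your outline matches the paper's.
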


\begin{remark}
  Condition \textbf{A2} above is equivalent to the easily verifiable condition
  that the greatest common divisor of the set $\{\det(k,l): k, l \in
  \mathcal{Z}_0\}$ is $1$, where $\det(k,l)$ is the determinant of the
  $2\times2$ matrix with columns $k$ and $l$.
\end{remark}

\begin{example}
  Let $\mathcal{Z}_0=\{\left(1,0\right), (-1,0), (1,1), (-1,-1)\}$. It is clear
  that $\mathcal{Z}_0$ satisfies the assumptions of Theorem~\ref{T:NS}.
  Therefore, equation~\eqref{E:NS-vort} is ergodic with degenerate driving noise
  \begin{align*}
    QW(t,x) = W_1(t)\sin(x_1)
            + W_2(t)\cos(x_1)
            + W_3(t)\sin(x_1+x_2)
            + W_4(t)\cos(x_1+x_2).
  \end{align*}
\end{example}

\subsection{Main idea in the proof of Theorem~\ref{T:NS}}\label{SS:NS-Idea}

The general criterion for uniqueness of the stationary measure is taken again
from Da Prato-Zabczyk's monograph~\cite{da-prato.zabczyk:96:ergodicity}. Namely
it is a well-known and much-used fact that the strong Feller property, combined
with some irreducibility of the transition probability, implies the uniqueness
of the invariant measure. Denote by $P_t$ the underlying semigroup as featured
in~\eqref{E:ave}. In order to establish the strong Feller property of $P_t$, one
usually resorts to an integration by parts argument in the Malliavin calculus
sense, which we now describe.

\subsubsection{Malliavin calculus approach in a non-degenerate case}

Having in mind that the strong Feller property is a crucial step towards
uniqueness of the invariant measure, let us recall a basic criterion allowing to
establish this type of result
(see~\cite[Lemma~7.1.5]{da-prato.zabczyk:96:ergodicity}).
\begin{proposition}\label{P:crt-Feller}
  A semigroup $P_t$ on a Hilbert space $H$ is strong Feller if, for all
  $\varphi: H \to \mathbb{R}$ with $\|\varphi\|_\infty$ and
  $\|\nabla\varphi\|_\infty$ finite and for all $t>0$ one has
  \begin{align}\label{E:crt-Feller}
    \left|\nabla P_t\varphi(x)\right| \le C(\|x\|) \, \|\varphi\|_\infty,
  \end{align}
  where $C:\mathbb{R}_+\to\mathbb{R}$ is a fixed nondecreasing function.
\end{proposition}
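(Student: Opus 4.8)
The statement to prove is that the gradient bound \eqref{E:crt-Feller} forces $P_t$ to send bounded Borel functions to bounded continuous ones, which is precisely the strong Feller property. Throughout I use that $P_t$ is a Markovian transition semigroup, so $P_t\varphi(x)=\int_H \varphi(z)\,P_t(x,\mathrm dz)$ with $P_t(x,\cdot)$ a probability measure; in particular $\|P_t\varphi\|_\infty\le\|\varphi\|_\infty$, so boundedness of $P_t\varphi$ is automatic and the whole content lies in proving continuity.

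The plan is to first upgrade the infinitesimal bound \eqref{E:crt-Feller} into a genuine Lipschitz estimate on balls. Fix $t>0$, a radius $R>0$, and a test function $\varphi$ with $\|\varphi\|_\infty,\|\nabla\varphi\|_\infty<\infty$. Since the ball $B(0,R)\subset H$ is convex, for $x,y\in B(0,R)$ the segment $\{(1-s)x+sy;\,s\in[0,1]\}$ stays in $B(0,R)$, and integrating the gradient along it gives
\[
  |P_t\varphi(x)-P_t\varphi(y)|\le \sup_{z\in B(0,R)}|\nabla P_t\varphi(z)|\,|x-y|\le C(R)\,\|\varphi\|_\infty\,|x-y|,
\]
where I used that $C$ is nondecreasing. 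The crucial feature is that this Lipschitz constant is controlled by $\|\varphi\|_\infty$ \emph{alone}, with no dependence on $\|\nabla\varphi\|_\infty$; this is exactly what will survive the approximation below.

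Next I would remove the smoothness assumption and reach all bounded Borel $\varphi$. The idea is to approximate $\varphi$ by a sequence $\varphi_n$ of the smooth test functions appearing in \eqref{E:crt-Feller}, with $\sup_n\|\varphi_n\|_\infty\le\|\varphi\|_\infty$ and $\varphi_n\to\varphi$ in the bounded-pointwise sense. For each fixed $x$, dominated convergence against the probability measure $P_t(x,\cdot)$ then yields $P_t\varphi_n(x)\to P_t\varphi(x)$. Since every $P_t\varphi_n$ is $C(R)\|\varphi\|_\infty$-Lipschitz on $B(0,R)$ by the previous step, and this constant is uniform in $n$, the pointwise limit $P_t\varphi$ is again $C(R)\|\varphi\|_\infty$-Lipschitz on $B(0,R)$. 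As $R>0$ is arbitrary, $P_t\varphi$ is continuous on all of $H$, hence lies in $C_b(H)$. To make the passage from the smooth test functions to all bounded Borel functions rigorous I would invoke the functional monotone class theorem: these test functions form a multiplicatively closed family containing the constants and generating the Borel $\sigma$-field, and the class of $\varphi$ for which $P_t\varphi$ obeys the ball-Lipschitz bound is stable under bounded monotone limits precisely because the Lipschitz constant is slaved to $\|\varphi\|_\infty$, which stays bounded along such limits.

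The main obstacle is the approximation step in infinite dimensions: one cannot simply mollify, since balls of $H$ are not compact and bounded continuous functions need not be uniformly continuous. The standard remedy is to mollify along finite-rank projections of $H$ and let the rank grow, producing the required smooth approximants with sup-norm control and the correct mode of convergence; the monotone-class bookkeeping must then be arranged so that only sup-norms (and not the non-additive constant $C(R)\|\varphi\|_\infty$) are tracked through the limit. Once this approximation is in place, everything else—the segment estimate, the dominated-convergence limit, and the local-to-global upgrade of continuity—is soft.
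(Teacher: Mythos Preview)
The paper does not prove this proposition; it simply records it as \cite[Lemma~7.1.5]{da-prato.zabczyk:96:ergodicity} and moves on. Your sketch is the standard argument behind that lemma: integrate the gradient bound along segments to get a local Lipschitz estimate with constant $C(R)\,\|\varphi\|_\infty$, then pass from the test class to all of $B_b(H)$ by approximation. You also correctly isolate the only genuinely nontrivial point, namely manufacturing smooth cylindrical approximants in infinite dimensions with sup-norm control.

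One comment on the extension step: the monotone-class bookkeeping you worry about (the constant $C(R)\|\varphi\|_\infty$ is not additive in $\varphi$) is avoided entirely if you phrase things dually. For fixed $x,y\in B(0,R)$, the total variation of the signed measure $P_t(x,\cdot)-P_t(y,\cdot)$ equals $\sup\{P_t\psi(x)-P_t\psi(y):\|\psi\|_\infty\le 1\}$, and on a Polish space this supremum may be computed over bounded Lipschitz $\psi$, hence (after your cylindrical mollification) over smooth $\psi$. Your segment estimate then gives
\[
  \|P_t(x,\cdot)-P_t(y,\cdot)\|_{\mathrm{TV}}\le C(R)\,|x-y|,
\]
and the strong Feller property for an arbitrary bounded Borel $\varphi$ follows immediately from $|P_t\varphi(x)-P_t\varphi(y)|\le\|\varphi\|_\infty\,\|P_t(x,\cdot)-P_t(y,\cdot)\|_{\mathrm{TV}}$, without any monotone-class hierarchy.
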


Interestingly enough, strong Feller properties can be deduced from Malliavin
calculus considerations. To this aim, let us introduce some additional notation.
First denote by $\Phi_t: C\left([0,t]; \mathbb{R}^m\right) \times H \to H$ the
It\^{o} map such that the solution to~\eqref{E:NS-vort} can be written
$X_t=\Phi_t(W,X_0)$ for every initial condition $X_0 \in H$ and almost every
realization of $W$. Then the infinitesimal variation of $X_t$ with respect to a
perturbation $\xi \in H$ of the initial condition is given by
\begin{align}\label{E:J_t}
  J_t\xi \coloneqq \lim_{\varepsilon\to0}\frac{\Phi_t(W,X_0+\varepsilon\xi)-\Phi_t(W,X_0)}{\varepsilon}.
\end{align}
In addition, the infinitesimal variation of $X_t$ with respect to a perturbation
of the Wiener process in the direction of $V(s) = \int_0^s v(r) \ud r$ with $v
\in L^2([0,T];H)$ is given by the Malliavin derivative
\begin{align}\label{E:D^vX_t}
  \mathcal{D}^v X_t = \lim_{\varepsilon\to0} \frac{\Phi_t(W+\varepsilon V,X_0)-\Phi_t(W,X_0)}{\varepsilon}.
\end{align}
We now state the lemma relating Malliavin derivatives and the strong Feller
property. Notice that it is proved here in a rather informal way.
\begin{lemma}\label{L:crt-Feller}
  Let us assume that for every direction $\xi\in H$ one can find $v = v(\xi)$
  lying in the space $L^2\left([0,T],H\right)$, such that $$\mathcal{D}^v X_t =
  J_t \xi.$$ Then the semigroup $P_{t}$ enjoys the strong Feller property as
  given in Proposition~\ref{P:crt-Feller}.
\end{lemma}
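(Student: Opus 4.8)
The plan is to verify the gradient estimate \eqref{E:crt-Feller} of Proposition~\ref{P:crt-Feller} head-on, using the hypothesis to turn the sensitivity of $P_t\varphi$ with respect to the initial datum into a Malliavin derivative, and then integrating by parts so that the gradient falls off $\varphi$ entirely. First I would differentiate $P_t\varphi(x) = \mathbb{E}[\varphi(\Phi_t(W,x))]$ in a direction $\xi \in H$. Assuming enough regularity to interchange differentiation and expectation, the chain rule together with the definition \eqref{E:J_t} of the Jacobian $J_t$ yields
\begin{align*}
  \langle \nabla P_t\varphi(x), \xi \rangle = \mathbb{E}\left[ \langle \nabla\varphi(X_t), J_t\xi \rangle \right].
\end{align*}
At this stage the expression still involves $\nabla\varphi$, which is exactly what must be removed in order to match the right-hand side of \eqref{E:crt-Feller}, namely $C(\|x\|)\,\|\varphi\|_\infty$.

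The second step exploits the hypothesis. By assumption there is a control $v = v(\xi) \in L^2([0,T];H)$ with $\mathcal{D}^v X_t = J_t\xi$, so substituting and recognising the right-hand side as a directional Malliavin derivative of $\varphi(X_t)$ via the chain rule gives
\begin{align*}
  \langle \nabla P_t\varphi(x), \xi \rangle
  = \mathbb{E}\left[ \langle \nabla\varphi(X_t), \mathcal{D}^v X_t \rangle \right]
  = \mathbb{E}\left[ \mathcal{D}^v\big( \varphi(X_t) \big) \right].
\end{align*}
I would then invoke the integration-by-parts (duality) formula of Malliavin calculus, which rewrites the expectation of a derivative in the direction $v$ as an expectation against the associated Skorokhod integral $\delta(v) = \int_0^T \langle v(s), \ud W_s\rangle$:
\begin{align*}
  \mathbb{E}\left[ \mathcal{D}^v\big( \varphi(X_t) \big) \right]
  = \mathbb{E}\left[ \varphi(X_t)\, \delta(v) \right].
\end{align*}
The derivative has now been transferred from $\varphi$ onto the noise. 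Since $\varphi$ is bounded, a Cauchy--Schwarz estimate gives $|\langle \nabla P_t\varphi(x), \xi\rangle| \le \|\varphi\|_\infty\,(\mathbb{E}[\delta(v)^2])^{1/2}$, and taking the supremum over $\|\xi\| \le 1$ produces \eqref{E:crt-Feller} as soon as the cost of the control obeys a bound $\mathbb{E}[\delta(v)^2] \le C(\|x\|)^2$ with $C$ nondecreasing; such an estimate follows from the explicit construction of $v(\xi)$ together with moment bounds on the Jacobian $J_t$ and on the Malliavin matrix of $X_t$.

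The main obstacle is twofold. The purely technical point is regularity: the criterion only grants $\varphi$ bounded with bounded gradient, whereas the chain rule and the duality formula are transparent only for smooth cylindrical functionals, so a density/approximation argument is required --- importantly, once the integration by parts is carried out the final estimate no longer sees $\nabla\varphi$, which is precisely what makes the approximation harmless and yields a bound in $\|\varphi\|_\infty$ alone. The genuinely hard point, which the statement conveniently hides inside its hypothesis, is the existence of an admissible control $v(\xi)$ realising $\mathcal{D}^v X_t = J_t\xi$ while keeping $\mathbb{E}[\delta(v)^2]$ under control: for the degenerate Navier--Stokes noise of Definition~\ref{D:Noise} this equation cannot in general be solved exactly, and one is forced to settle for $\mathcal{D}^v X_t \approx J_t\xi$ up to an error decaying as $t\to\infty$. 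That relaxation is exactly what drives the passage from the strong Feller property to the \emph{asymptotic} strong Feller property in~\cite{hairer.mattingly:06:ergodicity}; the present lemma isolates the idealised exactly-solvable case so that the integration-by-parts mechanism stands out cleanly.
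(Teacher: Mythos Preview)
Your proposal is correct and follows essentially the same route as the paper's proof: differentiate $P_t\varphi$ in the direction $\xi$, replace $J_t\xi$ by $\mathcal{D}^v X_t$ via the hypothesis, apply the Malliavin chain rule and integration by parts to transfer the derivative onto the noise, and bound by $\|\varphi\|_\infty$ times the cost of the control. The paper presents this informally and bounds the stochastic integral directly by $\|v\|_{L^2([0,T],H)}$ rather than going through Cauchy--Schwarz and $\mathbb{E}[\delta(v)^2]$, but this is a cosmetic difference; your additional remarks on approximation and on the degenerate case are accurate context that the paper either omits or defers to later sections.
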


\begin{proof}
  By definition of the semigroup $P_t$, we have
  \begin{align*}
    \langle\nabla{P}_t\varphi(X_0),\xi\rangle
       =\mathbb{E}\left[\nabla\varphi(X_t) \, J_t\xi\right]
       =\mathbb{E}\left[\nabla\varphi(X_t) \, \mathcal{D}^vX_t\right],
  \end{align*}
  where the second identity stems from our assumption $\mathcal{D}^v X_t = J_t
  \xi$. Now it is easily seen that $\nabla \varphi(X_t) \mathcal{D}^v X_t =
  \mathcal{D}^v\left(\varphi(X_t)\right)$. Hence a standard integration by parts
  in the Malliavin calculus sense yields
  \begin{align*}
      \langle\nabla{P}_t\varphi(X_0),\xi\rangle
    = \mathbb{E}\left[\varphi(X_t)\int_0^tv(s)\ud W_s\right].
  \end{align*}
  Therefore we get
  \begin{align*}
    \left| \langle\nabla{P}_t\varphi(X_0),\xi\rangle\right|
    \le \|\varphi\|_\infty\|v\|_{L^2([0,T],H)},
  \end{align*}
  from which inequality~\eqref{E:crt-Feller} is easily deduced. This concludes
  the proof of Lemma~\ref{L:crt-Feller}.
\end{proof}

Thanks to Lemma~\ref{L:crt-Feller}, the strong Feller property for $X_t$ is
reduced to a relation involving its Malliavin derivative. However, the ability
to find a $v$ such that $\mathcal{D}^v X_t = J_t \xi$ relies on some
invertibility of the Malliavin matrix on a proper space. This requirement poses
some major technical difficulty in an infinite dimensional setting. In addition,
a more fundamental question is whether one should expect the strong Feller
property for an infinitely dimensional system at all when the noise is
degenerate. Indeed, it seems that the only result showing the strong Feller
property for an infinite dimensional system where the covariance of the noise
does not have a dense range is given in~\cite{hairer.mattingly:06:ergodicity}.
However, this still requires the forcing to act in a non-degenerate way on a
subspace of finite codimension. A different approach is thus necessary for
degenerate noises like the one described in Definition~\ref{D:Noise}.

\subsubsection{Malliavin calculus approach in the degenerate case}

A key observation of Hairer and Mattingly~\cite{hairer.mattingly:06:ergodicity}
is that the strong Feller property is neither essential nor natural for the
study of ergodicity in dissipative infinite-dimensional systems. To provide an
alternative, they introduced the following weaker \textit{asymptotic strong
Feller} property which is satisfied by the system under consideration and is
sufficient to give ergodicity.

Let $d$ be a pseudo-metric on $\mathcal{X}$. Given two positive finite Borel
measures $\mu_1, \mu_2$ on $\mathcal{X}$ with equal mass, denote by
$\mathcal{C}(\mu_1,\mu_2)$ the set of positive measure on $\mathcal{X}^2$ with
marginals $\mu_1$ and $\mu_2$. Define
\begin{align*}
  \|\mu_1-\mu_2\|_d \coloneqq \inf_{\mu\in\mathcal{C}(\mu_1,\mu_2)}\int_{\mathcal{X}^2}d(x,y)\mu(\ud x,\ud y).
\end{align*}

\begin{definition}\label{D:asy-Feller}
  A Markov transition semigroup $P_t$ on a Polish space $\mathcal{X}$ is called
  asymptotically strong Feller at $x$ if there exists a totally separating
  system of pseudo-metrics $\left\{d_n; n\ge1\right\}$ for $\mathcal{X}$ and a
  sequence $\left\{t_n; n\ge 1\right\}$ with $t_n>0$ such that
  \begin{align*}
    \liminf_{U\in \mathcal{U}_x}\: \limsup_{n\to\infty}\: \sup_{y\in U}\Norm{P_{t_n}(x,\cdot)-P_{t_n}(y,\cdot)}_{d_n}=0,
  \end{align*}
  where $\mathcal{U}_x$ is the collection of all open sets containing $x$. It is
  called asymptotically strong Feller if this property holds at every $x \in
  \mathcal{X}$.
\end{definition}

\begin{remark}
  It is proved in~\cite[Corollary 3.5]{hairer.mattingly:06:ergodicity} that if
  $\{d_n; n\ge 1\}$ is a total separating system of pseudo-metrics for
  $\mathcal{X}$, then $\|\mu_1-\mu_2\|_{\mathrm{TV}} = \lim_{n\to\infty}
  \|\mu_1-\mu_2\|_{d_n}$ for any two positive measures $\mu_1$ and $\mu_2$ with
  equal mass on $\mathcal{X}$.
\end{remark}
\begin{remark}
  Note that when $t_n=t$ for all $n$, the transition probabilities
  $P_t(x,\cdot)$ are continuous in the total variation topology and thus $P_s$
  is strong Feller at times $s\ge t$. In order to show~\eqref{E:NS-vort}
  satisfies the asymptotic strong Feller property, one will take $t_n = n$ as
  will be seen below.
\end{remark}

The following criterion for the asymptotic strong Feller property, generalizing
Proposition~\ref{P:crt-Feller}, is then proposed
in~\cite{hairer.mattingly:06:ergodicity}.

\begin{proposition}\label{P:crt-asy-Feller}
  Let $\left\{t_{n}; n \ge 1\right\}$ and $\left\{\delta_n; n \ge 1\right\}$ be
  two positive sequences with $\{t_n\}$ nondecreasing and $\{\delta_n\}$
  converging to zero. A semigroup $P_t$ on a Hilbert space $\mathcal{H}$ is
  asymptotic strong Feller if, for all $\varphi: \mathcal{H} \to \mathbb{R}$
  with $\|\varphi\|_\infty$ and $\|\nabla\varphi\|_\infty$ finite, we have
  \begin{align}\label{E:crt-asy-Feller}
    \left\lvert\nabla{P}_{t_n}\varphi(x)\right\rvert
    \le C(\|x\|)\left(\|\varphi\|_\infty+\delta_n\|\nabla\varphi\|_\infty\right),
  \end{align}
  where $C:\mathbb{R}_+\to\mathbb{R}$ is a fixed nondecreasing function.
\end{proposition}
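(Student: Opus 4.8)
The plan is to verify Definition~\ref{D:asy-Feller} directly, keeping the sequence $\{t_n\}$ from the hypothesis as the sequence of times and building a totally separating system of pseudo-metrics tailored to the gradient bound~\eqref{E:crt-asy-Feller}. Since enlarging $\delta_n$ only weakens~\eqref{E:crt-asy-Feller}, I may first assume without loss of generality that $\{\delta_n\}$ is nonincreasing. I would then set
\[
d_n(x,y) \coloneqq 1 \wedge \frac{\|x-y\|}{\delta_n}, \qquad x,y\in\ch .
\]
Because $\delta_n\to 0$, for every fixed pair $x\neq y$ one has $d_n(x,y)\to 1$, while $d_n$ is nondecreasing in $n$ and bounded by $1$; hence $\{d_n\}$ is a totally separating system of pseudo-metrics as required in Definition~\ref{D:asy-Feller}. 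The entire design of $d_n$ is dictated by the wish to make the scale $\delta_n$ built into the metric cancel exactly the factor $\delta_n$ multiplying $\|\nabla\varphi\|_\infty$ in~\eqref{E:crt-asy-Feller}.

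The engine of the argument is Kantorovich--Rubinstein duality. For two probability measures of equal mass and a pseudo-metric bounded by $1$, I would use
\[
\Norm{P_{t_n}(x,\cdot)-P_{t_n}(y,\cdot)}_{d_n}
= \sup_{\varphi}\big(P_{t_n}\varphi(x)-P_{t_n}\varphi(y)\big),
\]
the supremum being over all $\varphi$ that are $1$-Lipschitz with respect to $d_n$. The key observation is that any such $\varphi$ obeys two bounds for free: its total oscillation is at most $\sup d_n\le 1$, so after subtracting a constant (which leaves the integral against the mass-zero measure $P_{t_n}(x,\cdot)-P_{t_n}(y,\cdot)$ unchanged) one may take $\|\varphi\|_\infty\le 1$; and for nearby points $|\varphi(x)-\varphi(y)|\le \|x-y\|/\delta_n$, whence $\|\nabla\varphi\|_\infty\le 1/\delta_n$. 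Substituting these two facts into the hypothesis~\eqref{E:crt-asy-Feller} produces the decisive $n$-uniform estimate
\[
\big|\nabla P_{t_n}\varphi(z)\big|
\le C(\|z\|)\Big(1+\delta_n\cdot\tfrac{1}{\delta_n}\Big)
= 2\,C(\|z\|).
\]

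The conclusion is then a mean value inequality followed by two nested limits. Integrating the last bound along the segment joining $x$ and $y$, and using that $C$ is nondecreasing while $\|z\|\le\|x\|\vee\|y\|$ along that segment, I would obtain
\[
\Norm{P_{t_n}(x,\cdot)-P_{t_n}(y,\cdot)}_{d_n}
\le 2\,C\big(\|x\|\vee\|y\|\big)\,\|x-y\|,
\]
a bound that no longer depends on $n$. Taking $\limsup_{n\to\infty}$ leaves it intact, and taking $\liminf$ over shrinking open neighborhoods $U$ of $x$ drives $\sup_{y\in U}\|x-y\|$ to $0$ while $C(\|x\|\vee\|y\|)$ stays bounded, so the triple expression in Definition~\ref{D:asy-Feller} vanishes and the asymptotic strong Feller property follows at every $x\in\ch$.

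The one genuinely delicate step is the regularity needed to invoke~\eqref{E:crt-asy-Feller}: a function that is only $d_n$-Lipschitz need not be Fr\'echet differentiable on $\ch$, so I expect the main work to lie in first establishing the gradient bound for a dense class of smooth bounded Lipschitz functions and then approximating a general $d_n$-Lipschitz $\varphi$ pointwise while preserving the two controls $\|\varphi\|_\infty\le 1$ and $\|\nabla\varphi\|_\infty\le 1/\delta_n$. Everything surrounding this---the duality, the cancellation of scales, and the passage to the two limits---is soft.
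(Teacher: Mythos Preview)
The paper does not actually prove this proposition; it merely states it as a criterion quoted from~\cite{hairer.mattingly:06:ergodicity}, so there is no ``paper's own proof'' to compare against. Your argument is the standard one from that reference (Proposition~3.12 there): the choice $d_n(x,y)=1\wedge \delta_n^{-1}\|x-y\|$, Kantorovich--Rubinstein duality, the observation that $d_n$-Lipschitz test functions satisfy $\|\varphi\|_\infty\le 1$ and $\|\nabla\varphi\|_\infty\le 1/\delta_n$, and the mean value estimate along the segment are exactly the steps used in the original. Your identification of the one nontrivial technical point---that $d_n$-Lipschitz functions need not be Fr\'echet differentiable and must be approximated---is accurate and is handled in~\cite{hairer.mattingly:06:ergodicity} precisely by such a smoothing.
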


\begin{remark}\label{R:IBP-asyt}
  The asymptotic strong Feller property can be related to Malliavin calculus
  notions. The point is that the Malliavin matrix is not invertible in our
  degenerate context. Therefore we are not able to construct a $v \in
  L^2([0,T],\mathbb{R}^m)$ for a fixed time $T$ that produces the same
  infinitesimal shift in the solution as a perturbation $\xi$ in the initial
  condition. Instead, we can construct a $v \in L^2([0,\infty); \mathbb{R}^m)$
  such that an infinitesimal shift of the noise in the direction $v$ provides
  asymptotically the same effect as an infinitesimal perturbation in the
  direction $\xi$. In other words, one has $\| J_t \xi - \mathcal{D}^{v_{0,t}}
  X_t \| \to 0$ as $t \to \infty$, where $v_{0,t}$ is the restriction of $v$ on
  the interval $[0,t]$. Set $\rho_t \coloneqq J_t\xi - \mathcal{D}^{v_{0,t}}
  X_t$. Then one has the approximate integration by parts formula:
  \begin{align}\label{E:approx-iby}
    \langle\nabla{P}_t\varphi(X_0),\xi\rangle \nonumber
    & =\mathbb{E}\left(\left(\nabla\varphi\right)(X_t)J_t\xi\right)                                                                   \\
    & =\mathbb{E}\left((\nabla\varphi)(X_t)\mathcal{D}^{v_{0,t}}X_t\right)+\mathbb{E}\left((\nabla\varphi)(X_t)\rho_t\right)\nonumber \\
    & =\mathbb{E}\left(\varphi(X_t)\int_0^tv(s)\ud W_s\right)+\mathbb{E}\left((\nabla\varphi)(X_t)\rho_t\right),
  \end{align}
  from which it follows that
  \begin{align*}
    \left\lvert\langle\nabla{P}_t\varphi(X_0),\xi\rangle\right\rvert
    & \le\|\varphi\|_\infty \times \mathbb{E} \left(\left|\int_0^tv(s)\ud W_s\right|\right) + \|\nabla\varphi\|_\infty \times \mathbb{E}\left(\|\rho_t\|\right).
  \end{align*}
  If $\mathbb{E}\left(\|\rho_t\|\right)$ is small, one easily
  derives~\eqref{E:crt-asy-Feller} from~\eqref{E:approx-iby}. Notice that the
  explicit construction of $v$ in~\eqref{E:approx-iby} is highly non-trivial and
  rather technical. We refer the interested readers to the original paper for
  details. Instead, we make the following remark regarding the construction of
  $v$.
\end{remark}

\begin{remark}\label{R:v}
  In order to construct a suitable $v$ one needs to better incorporate the
  pathwise smoothing which the dynamics possesses at small scales. Due to the
  degenerate nature of the noise, $v$ will be constructed as a non-adapted
  process. The first term in~\eqref{E:approx-iby} is thus understood as a
  Skorohod integral. In order to provide a good estimate of this term, one has
  to have good control of the ``low modes'' when they are not directly forced by
  the noise and when Girsanov's theorem cannot be used directly (cf.
  Theorem~4.12 in~\cite{hairer.mattingly:06:ergodicity}).  This is the heart of
  the analysis for the structure of the Malliavin matrix for
  equation~\eqref{E:NS-vort}. It exploits the algebraic structure of the
  nonlinearity, which transmits the randomness to the non-directly exited
  unstable directions. This results in an associated diffusion which in the end
  is hypoelliptic (see~\cite{mattingly.pardoux:06:malliavin} for more details).
\end{remark}

With Remarks~\ref{R:IBP-asyt} and~\ref{R:v} in hand, let us conclude by stating
a proposition which yields the asymptotic Feller property for the stochastic
Navier-Stokes equation.

\begin{proposition}\label{P:syp-feller-NS}
  Let $P_t$ be the semigroup related to equation~\eqref{E:NS-vort}. Then for all
  $\eta > 0$, there exist constants $C, \delta>0$ such that for every
  Fr\'{e}chet differentiable function $\varphi$ from $H$ to $\mathbb{R}$ one has
  the bound
  \begin{align}\label{E:NS-asy-Feller}
    \|\nabla{P}_n\vp(X_0)\| \le C\exp\left(\eta\|X_0\|^2\right) \times \left(\|\varphi\|_\infty+\|\nabla\varphi\|_\infty e^{-\delta n}\right),
  \end{align}
  for every $X_0 \in H$ and $n \in \mathbb{N}$. Otherwise stated, the asymptotic
  strong Feller property of Proposition~\ref{P:crt-asy-Feller} holds true for
  equation~\eqref{E:NS-vort} by considering $t_n = n$ and $\delta_n = e^{-\delta
  n}$.
\end{proposition}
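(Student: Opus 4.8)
The plan is to establish the gradient bound \eqref{E:NS-asy-Feller} via the approximate integration by parts formula from Remark~\ref{R:IBP-asyt}. The starting point is the identity
\begin{align*}
  \langle \nabla P_n \varphi(X_0), \xi \rangle
  = \mathbb{E}\left(\varphi(X_n) \int_0^n v(s)\, \ud W_s\right)
    + \mathbb{E}\left((\nabla \varphi)(X_n)\, \rho_n\right),
\end{align*}
where $\rho_n = J_n \xi - \mathcal{D}^{v_{0,n}} X_n$ and $v$ is the (non-adapted) control alluded to in Remark~\ref{R:v}. Taking absolute values and using $\|\varphi\|_\infty$, $\|\nabla\varphi\|_\infty$ as the two relevant norms, the task reduces to bounding the two expectations by $C\exp(\eta\|X_0\|^2)$ and $C\exp(\eta\|X_0\|^2)e^{-\delta n}$ respectively. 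I would therefore state the bound for a fixed direction $\xi$ with $\|\xi\|\le 1$ and then take the supremum over such $\xi$ to obtain the operator-norm statement.

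First I would treat the remainder term $\mathbb{E}(\|\rho_n\|)$, which is where the hypoelliptic smoothing must be exploited. The construction of $v$ is designed precisely so that the residual $\rho_t$ between the true Jacobian flow $J_t\xi$ and the Malliavin-induced shift $\mathcal{D}^{v_{0,t}}X_t$ decays exponentially; concretely one arranges $\mathbb{E}(\|\rho_n\|)\le C\exp(\eta\|X_0\|^2)e^{-\delta n}$ for some $\delta>0$. This accounts for the $\|\nabla\varphi\|_\infty e^{-\delta n}$ contribution in \eqref{E:NS-asy-Feller}. Second I would bound the stochastic-integral term. Because $v$ is not adapted, $\int_0^n v(s)\,\ud W_s$ must be read as a Skorohod integral, and its $L^1$ (or $L^2$) norm is controlled by the Malliavin-Sobolev norm of $v$, i.e.\ by $\|v\|_{L^2([0,n];\mathbb{R}^m)}$ together with the norm of its Malliavin derivative. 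One then shows these norms are bounded by $C\exp(\eta\|X_0\|^2)$, uniformly in $n$, giving the $\|\varphi\|_\infty$ contribution.

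The main obstacle, as flagged in Remark~\ref{R:v}, is the construction and control of the non-adapted control $v$ and hence the Skorohod integral estimate. The difficulty is genuinely algebraic-analytic: one must show that the Malliavin matrix, while not invertible, is nondegenerate enough on the relevant finite-dimensional ``low mode'' subspaces so that randomness injected in the forced directions $\mathcal{Z}_0$ propagates, through the bilinear nonlinearity $B(X,X)$, to all Fourier modes. This is exactly where assumptions \textbf{A1} and \textbf{A2} of Theorem~\ref{T:NS} enter: they guarantee that iterated Lie-bracket-type combinations of the driving modes span $\mathbb{Z}^2$, yielding the hypoellipticity that makes $\mathbb{E}(\|\rho_n\|)$ small while keeping $\|v\|$ controlled. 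The uniform-in-$n$ nature of the bound, together with the exponential decay, rests on the dissipativity of $\nu\Delta$, which supplies the a priori moment estimates $\mathbb{E}\exp(\eta\|X_t\|^2)<\infty$ needed to absorb all the $\exp(\eta\|X_0\|^2)$ prefactors. I would regard the delicate splitting of the time horizon into smoothing blocks and the attendant Skorohod-integral bound as the technically hardest step, and I would defer its full details to the original argument in \cite{hairer.mattingly:06:ergodicity}.
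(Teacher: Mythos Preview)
Your proposal is correct and follows precisely the approach the paper itself takes: the paper does not give a self-contained proof of this proposition but instead sketches the strategy in Remarks~\ref{R:IBP-asyt} and~\ref{R:v} (the approximate integration by parts identity, the decay of $\rho_t$, the Skorohod interpretation of the stochastic integral, and the hypoelliptic propagation via the nonlinearity under \textbf{A1}--\textbf{A2}) and defers the technical details to~\cite{hairer.mattingly:06:ergodicity}. Your write-up is essentially an expanded paraphrase of those two remarks, with the same deferral at the end.
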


We close this section by recalling that the asymptotic strong Feller property
stated in Proposition~\eqref{P:syp-feller-NS} leads to ergodicity. We skip those
considerations for sake of conciseness and we refer
to~\cite{hairer.mattingly:06:ergodicity} for more details. Also notice the
reference~\cite{kuksin.shirikyan:12:mathematics}, based on control type
arguments.

\section{Phase transition in ergodicity in $\R^d$ with $d\ge 3$}\label{S:PAM}

This section is devoted to another very important degenerate case for
equation~\eqref{E:G-mild}. That is, instead of considering a degenerate noise
$\dot W$ in~\eqref{E:G-mild}, we will assume that the diffusion coefficient $B$
can vanish. This situation occurs in particular when considering an important
system called \textit{parabolic Anderson model} (referred to as PAM in the
sequel). Our main message here is that in $\R^d$ when $d\ge 3$, one can observe
phase transitions (in terms of the noise intensity) between a situation where an
invariant measure exists and a very different case with no invariant measure. We
begin with a series of preliminary remarks in Section~\ref{SS:PAM-Prelim-rmk}.
Then we state some phase transition results for the parabolic Anderson model and
related equations in Section~\ref{SS:Phase-Trans}. Eventually we will derive a
new result about convergence in distribution to the invariant measure in
Section~\ref{SS:Gu-Li}.

\subsection{Preliminary remarks}\label{SS:PAM-Prelim-rmk}
So far we have expressed our stochastic PDE's in the Da Prato-Zabczyk infinite
dimensional setting. However, most of the results concerning parabolic Anderson
models use the multiparametric setting for SPDEs popularized by
Walsh~\cite{walsh:86:introduction} and Dalang~\cite{dalang:99:extending,
dalang.frangos:98:stochastic}. The current section will thus use this framework.
The reader is referred to~\cite{dalang.quer-sardanyons:11:stochastic} for a
correspondence between the infinite-dimensional and the multiparametric
settings. With this notational warning in mind, in this section we review
existence results for the invariant measure of the following stochastic heat
equation in $\R^d$ with $d\ge 3$:
\begin{equation}\label{E:SHE}
  \left(\dfrac{\partial}{\partial t}-\dfrac{1}{2}\Delta\right) u(t,x) = b(x,u(t,x))\dot W(t,x),\qquad
  \text{for $x\in \R^d$ and $t>0$}.
\end{equation}
In equation~\eqref{E:SHE}, $b(x,u)$ is uniformly bounded in the first variable
and globally Lipschitz continuous in the second variable, i.e., for some
constants $L_b>0$ and $L_0 \ge 0$,
\begin{equation}\label{E:lipcon}
  \lvert b(x,u)-b(x,v) \rvert \le L_b \lvert u-v \rvert \quad \text{and} \quad
  \lvert b(x,0)\rvert         \le L_0       \quad \text{for all $u,v \in \R$ and $x \in \R^d.$ }
\end{equation}
Notice that our hypothesis~\eqref{E:lipcon} allows the degenerate value
$b(x,0)=0$. In particular, it covers the linear case $b(x,u) = \lambda u$, which
is the aforementioned PAM (see~\cite{carmona.molchanov:94:parabolic}). In this
note, we will specifically focus on the degenerate case:
\begin{align}\label{E:b-vanish-0}
  b(x,0) \equiv 0, \quad \text{for all $x\in\R^d$.}
\end{align}

The noise $\dot{W}(t,x)$ featured in equation~\eqref{E:SHE} is defined in a
standard way within the random field framework for stochastic PDE's.
Specifically, $\dot W$ is defined on a complete probability space $(\Omega,
\mathcal{F}, \mathbb{P})$ with the natural filtration $\{\mathcal{F}_t \}_{t\ge
0}$ generated by the noise, it is a centered Gaussian noise that is white in
time and homogeneously colored in space. Its covariance structure is given by
\begin{align}\label{E:Cor}
  \EE \left[ W\left(\psi\right) W\left(\phi\right) \right]
  = \int_0^\infty \ud s \int_{\R^d} \Gamma(\ud x)(\psi(s,\cdot)*\widetilde{\phi}(s,\cdot))(x),
\end{align}
where $\psi$ and $\phi$ are continuous and rapidly decreasing functions,
$\widetilde{\phi}(x) \coloneqq \phi(-x)$, ``$*$'' refers to the convolution in
the spatial variable, and $\Gamma$ is a nonnegative and nonnegative definite
tempered measure on $\R^d$ that is commonly referred to as the
\textit{correlation measure}. The Fourier transform\footnote{We use the
  following convention and notation for Fourier transform: $\mathcal{F}\psi(\xi)
  = \widehat{\psi}(\xi) \coloneqq \int_{\R^d} e^{-i x\cdot \xi}\psi(x) \ud x$
for any Schwarz test function $\psi\in \mathcal{S}(\R^d)$.} of $\Gamma$ (in the
generalized sense) is also a nonnegative and nonnegative definite tempered
measure, which is usually called the \textit{spectral measure} and is denoted by
$\widehat{f}(\ud \xi)$.  Moreover, in the case where $\Gamma$ has a density $f$,
namely $\Gamma(\ud x) = f(x) \ud x$, we write $\widehat{f}(\ud \xi)$ as
$\widehat{f}(\xi) \ud \xi$. Existence and uniqueness of~\eqref{E:SHE} for
bounded (resp.~rough) initial conditions were established
in~\cite{dalang:99:extending} (resp.~\cite{chen.kim:19:nonlinear}) under
\textit{Dalang's condition}
\begin{align}\label{E:Dalang}
  \Upsilon(\beta) \coloneqq \left(2\pi\right)^{-d} \int_{\R^d} \frac{\widehat{f}(\ud\xi)}{\beta+\lvert\xi\rvert^2}<\infty
  \quad \text{for some (and hence all) $\beta>0$.}
\end{align}
Moreover, existence and uniqueness results in this setting rely on It\^o type
estimates which are elaborations of~\eqref{E:Cor}. Namely, for an adapted
process $v = \{v (t,x); t \ge 0, x \in \R^d\}$, we have
\begin{align}\label{E:inner-v}
  \E \left[\left(\int_s^t \int_{\R^d} v(r,y) W(\ud r,\ud y)\right)^2\right]
  = \int_s^t \ud r \iint_{\R^{2d}} f(y - y') \, \E[v(r,y) v(r,y')] \, \ud y \ud y'.
\end{align}
Ideally one would like to consider general space-time covariance structures like
in~\cite{hu.huang.ea:15:stochastic}. However fractional dependences in time yield
a much more complicated picture in terms of ergodic behavior, as assessed e.g in~\cite{deya.panloup.ea:19:rate}.

In the literature (starting from Da Prato-Zabczyk's contributions summarized in
Section~\ref{SS:Ergodic}), the existence of invariant measures for SHE is often
studied with a drift term and a general second order differential operator
$\mathcal{A}$:
\begin{align}\label{E:Drift}
  \left(\dfrac{\partial}{\partial t}- \mathcal{A} \right) u(t,x)
   = g(x,u(t,x)) + b(x,u(t,x)) \dot W(t,x), \quad \text{ for $x\in \mathcal{O}$, $t>0$}.
\end{align}
In order to control the growth of the solution's moments, both the drift term
$g$ and the differential operator $\mathcal{A}$ (paired with its domain
$\mathcal{O}$ and boundary conditions) have to be sufficiently dissipative to
counterbalance the diffusion part that is governed by the diffusion coefficient
$b$ and the noise correlation function/measure $f$. We claim that the setup as
given in~\eqref{E:SHE} (or the one in Chen and
Eisenberg~\cite{chen.eisenberg:24:invariant}) is among the most challenging
ones. This is because:

\begin{enumerate}[wide, labelwidth=!, labelindent=0pt, label=\textbf{(\roman*)}]
  \setlength\itemsep{.1in}

  \item The semigroup for a bounded domain $\mathcal{O}\subseteq\R^d$ usually
    has stronger contraction properties than that for the whole space $\R^d$.
    This bounded domain assumption is the setup for
    Cerrai~\cite{cerrai:01:second, cerrai:03:stochastic}, Brze\'{z}niak and
    G\c{a}tarek~\cite{brzezniak.gatarek:99:martingale}, and Bogachev and
    R\"ockner~\cite{bogachev.rockner:01:elliptic}. One can find more related
    references from the above three papers. For the random field approach,
    Mueller~\cite{mueller:93:coupling} studied the equation on a torus with the
    operator $\frac{\partial^2}{\partial x^2} - \alpha$, with $\alpha>0$. This
    parameter $\alpha$ provides an exponential dissipative effect; see Theorem
    1.2 (\textit{ibid.}).

\end{enumerate}

In the following, we will focus on works where the spatial domain is the entire
space $\R^d$. There are much fewer works in this setting; notably, we will
comment on the following references~\cite{assing.manthey:03:invariant,
  chen.eisenberg:24:invariant, eckmann.hairer:01:invariant,
gu.li:20:fluctuations, misiats.stanzhytskyi.ea:16:existence,
misiats.stanzhytskyi.ea:20:invariant, tessitore.zabczyk:98:invariant}. Most of
the works in this line have been carried out in some weighted $L^2(\R^d)$ space
with a weight function $\rho$, referred as $L_\rho^2(\R^d)$. This setup was
initially introduced by Tessitore and
Zabczyk~\cite{tessitore.zabczyk:98:invariant}. However, there are two
exceptions: Eckmann and Hairer used the $L^\infty\left(\R\right)$ space and Gu
and Li~\cite{gu.li:20:fluctuations} used the space of continuous functions
denoted by $C\left(\R^d\right)$.

\begin{enumerate}[wide, labelwidth=!, labelindent=0pt, label=\textbf{(\roman*)}, start=2]
  \setlength\itemsep{.1in}

  \item It is, in general, much harder to handle the degenerate diffusion
    coefficient case: $b(u) = u$. Eckmann and
    Hairer~\cite{eckmann.hairer:01:invariant} studied the SHE on $\R$, albeit
    with an additive noise (that is $b(\cdot) \equiv 1$ in~\eqref{E:Drift}). In
    that case, a nonlinear drift term of the form $g(u) = u(1-u^2)$ contributes
    to a sufficient amount of dissipativeness. A different setting is provided
    by Misiats, Stanzhytskyi \textit{et
    al}~\cite{misiats.stanzhytskyi.ea:20:invariant}. Namely, they assume that
    the diffusion coefficient $b$ satisfies the following condition:
    \begin{align*}
      \lvert b(x,u_1) - b(x,u_2)\rvert \le L \, \varphi(x) \lvert u_1 - u_2 \rvert,
    \end{align*}
    for some function $\varphi(x)$ that decays fast enough. In a slightly
    earlier paper of Misiats, Stanzhytskyi \textit{et
    al}~\cite{misiats.stanzhytskyi.ea:16:existence}, they also require $b$ to be
    bounded.

\end{enumerate}

Now we will further concentrate on works that allow the degenerate case $b(u) =
u$ in order to be able to cover the parabolic Anderson model.

\begin{enumerate}[wide, labelwidth=!, labelindent=0pt, label=\textbf{(\roman*)}, start=3]
  \setlength\itemsep{.1in}

  \item Lacking a dissipative drift term makes the problem more challenging.
    Indeed, Assing and Manthey's work~\cite{assing.manthey:03:invariant} allows
    $b(u) = u$. Within their framework, they can afford a space-time white noise
    in $d=1$. However, a trace-class noise (i.e., $Q$ in~\eqref{E:Noise-STD}
    being a trace-class operator) has to be considered in $d\ge 2$. Such general
    setup is achieved by a strong dissipative condition on the drift term $g$ in
    the following form: for some constants $C$ and $\kappa>0$, it holds that
    \begin{align*}
      u g(u) \le C - \kappa u^2, \quad \text{for all $u \in \R$}.
    \end{align*}
    In particular, $g$ cannot vanish.

\end{enumerate}

Now let us further narrow down to works without a drift term. In this case, only
the following papers are left: \cite{chen.eisenberg:24:invariant,
gu.li:20:fluctuations,tessitore.zabczyk:98:invariant}. In all these works,
without the help of the drift term, one has to use the weak dissipative property
of the heat kernel on the whole space coming from the factor $t^{-d/2}$ of the
heat kernel in $\R^d$. If we want this dissipative effect to be strong enough,
this imposes $d\ge 3$.

\begin{enumerate}[wide, labelwidth=!, labelindent=0pt, label=\textbf{(\roman*)}, start=4]
  \setlength\itemsep{.1in}

  \item The analysis of a model like PAM is simplified when one considers a
    trace-class noise in~\eqref{E:SHE}. This is what is assumed in Gu and
    Li~\cite{gu.li:20:fluctuations}: the spatial covariance measure $\Gamma$
    in~\eqref{E:Cor} is of the form $\Gamma(\ud x) = f(x)\ud x$, with
    \begin{align}\label{E:Corr-GuLi}
      f(x) = \int_{\R^d} \phi(x+y)\phi(y) \ud y, \quad \text{for some $\phi\in C_c^\infty(\R^d;\R_+)$} \, .
    \end{align}
    This ensures that the noise is trace-class, since
    $\int_{\R^d}\widehat{f}(\xi)\ud \xi = f(0) =
    \Norm{\phi}_{L^2(\R^d)}^{2}<\infty$. It is much more challenging to include
    more singular noises which are not trace-class. This is achieved in
    Tessitore and Zabczyk~\cite{tessitore.zabczyk:98:invariant}, where the
    conditions on the noise are given by Hypothesis 2.1 (i) and especially (3.4)
    of their paper. However, those conditions are very involved and sometimes
    difficult to check; see Section 5.2 of~\cite{chen.eisenberg:24:invariant}
    for a detailed discussion. In contrast, the work by Chen and
    Eisenberg~\cite{chen.eisenberg:24:invariant} provides the following concise
    condition on the correlation function in order to get a phase transition for
    the existence of an invariant measure:
    \begin{align}\label{E:SpectralCond}
      \int_{\R^d} \frac{\widehat{f}(\ud\xi)}{\lvert \xi \rvert^2 \wedge \lvert \xi \rvert^{2(1-\alpha)}}<\infty,
      \quad \text{for some $\alpha\in \left(4\Upsilon(0)L_b^2,1\right)$,}
    \end{align}
    where $L_b$ is the Lipschitz constant as given in~\eqref{E:lipcon}; see
    Remark 1.2 or (1.19) (\textit{ibid.}). Note that the above
    condition~\eqref{E:SpectralCond} implicitly requires that
    \begin{align}\label{E:Upsilon_0}
      \Upsilon(0) \coloneqq \left(2\pi\right)^{-d} \int_{\R^d} \frac{\widehat{f}(\ud \xi)}{\lvert\xi\rvert^2} <\infty.
    \end{align}
    Note also that the correlation in~\eqref{E:Corr-GuLi} satisfies
    condition~\eqref{E:SpectralCond}, by noticing that $\widehat{\phi}$ is a
    Schwarz function. It is worth pointing out that the trace-class noise
    condition $f(0) <\infty$ neither implies nor is implied by
    condition~\eqref{E:SpectralCond}. In particular, one easily finds some
    noises that are not trace-class but satisfy
    condition~\eqref{E:SpectralCond}. We briefly discuss Bessel and Riesz
    kernels below.

    \vspace{.05in}

  \begin{enumerate}[wide, labelwidth=!, labelindent=1em, label=\textbf{(\alph*)}]
    \setlength\itemsep{.1in}

    \item Let $f_s(\cdot)$ denote the family of \textit{Bessel
      kernels\footnote{Interested readers can refer Section 1.2.1
    of~\cite{grafakos:14:modern} for more details of the Bessel kernel.} with
  parameter $s>0$} on $\R^d$, i.e.,
      \begin{align*}
        f_s(x) = \mathcal{F}^{-1}\left[\frac{1}{\left(1+\left\lvert \xi \right\rvert^2\right)^{s/2}}\right](x),
        \quad \text{for $x\in \R^d$}.
      \end{align*}
      Since $f_s$ is both nonnegative and nonnegative definite, it can be served
      either as correlation function or the spectral measure. On the one hand
      (see Proposition~5.2 of~\cite{chen.eisenberg:24:invariant}), assuming
      $d\ge 3$ and using $f_s$ as the correlation function,
      condition~\eqref{E:Upsilon_0} (resp.~condition~\eqref{E:SpectralCond}) is
      satisfied for all $s>d-2$ (resp. $s>d-2(1-\alpha)$). On the other hand,
      $f_s$ is a trace-class operator if and only if $s > d$ since
      \begin{align*}
        \int_{\R^d} \widehat{f}_s(\xi)\ud \xi
        = \int_{\R^d} \left(1+\left\lvert \xi \right\rvert^2\right)^{-s/2} \ud \xi< \infty
        \quad \Longleftrightarrow \quad s > d.
      \end{align*}
      Hence, all Bessel kernels $f_s(\cdot)$ with $s\in(d-2(1-\alpha),d]$
      provide examples of correlation functions that are not trace-class
      operators but can ensure existence of a nontrivial invariant measure. On
      the other hand, when using the Bessel kernel $f_s$ as the spectral
      measure, as explained in Proposition~5.3
      of~\cite{chen.eisenberg:24:invariant}, assuming again $d\ge 3$,
      conditions~\eqref{E:Upsilon_0} and~\eqref{E:SpectralCond} are satisfied
      for all $s>2$ and for all $s>2(1-\alpha)$, respectively. Hence,
      correlation functions $\widehat{f_s}(x) = \left(1+|x|^2\right)^{-s/2}$
      with $s>2(1-\alpha)$ provide examples of trace-class operators that have
      long-range heavy-tail correlations, in contrast to the finite-range
      correlation in~\eqref{E:Corr-GuLi}. Note that $f_s$, used as correlation
      functions, also provides long-range correlation but with a light tail,
      i.e., some exponential tail.

    \item The Riesz kernel $f(x)= |x|^{-\beta}$ plays a prominent role of in the
      study of the homogeneous Gaussian noise, since it provides heavy-tailed
      correlations and is singular at zero. In particular, it is not a
      trace-class operator. However, it is easy to check that it won't satisfy
      either conditions~\eqref{E:SpectralCond} or~\eqref{E:Upsilon_0}.
      Example~5.10 of~\cite{chen.eisenberg:24:invariant} provides kernel
      functions that have similar behaviors to the Riesz kernel with power
      blow-up near zero at some rate and power decay near infinity at possibly a
      different rate. To be more precise, for any $s_1$ and $s_2\in (0,d)$, the
      \textit{Riesz-type kernel with parameters $(s_1,s_2)$} is defined as a
      combination of the Bessel kernel and its Fourier transform:
      \begin{align}\label{E:Riesz-type}
        f_{s_1,s_2}(x)\coloneqq f_{s_1}(x) + \widehat{f}_{s_2}(x) \quad \text{or equivalently} \quad
        \widehat{f}_{s_1,s_2}(\xi)\coloneqq \widehat{f}_{s_1}(\xi) + f_{s_2}(\xi).
      \end{align}
      It is clear that $f_{s_1,s_2}$ is both nonnegative and nonnegative
      definite. From the properties of the Bessel kernel and its Fourier
      transform, we see that
      \begin{align*}
        f_{s_1,s_2} (x) \asymp
          \begin{cases}
            |x|^{s_1-d} & |x| \to 0, \\
            |x|^{-s_2}  & |x| \to \infty.
          \end{cases}
      \end{align*}
      Assume $d\ge 3$. As explained in Example~5.10
      of~\cite{chen.eisenberg:24:invariant}, condition~\eqref{E:Upsilon_0} holds
      provided $s_1>d-2$ and $s_2>2$ and condition~\eqref{E:SpectralCond} holds
      provided that $s_1>d-2(1-\alpha)$ and $s_2>2(1-\alpha)$. Similar to the
      Bessel kernel case, $f_{s_1,s_2}(x)$ is a trace-class operator if and only
      if $s_1>d$. Let us assume $d\ge 3$. Then, when used as the correlation
      function, the Riesz-type kernel $f_{s_1,s_2}$ with $s_1 \in
      (d-2(1-\alpha),d)$ and $s_2 > 2(1-\alpha)$ provides non trace-class
      operators that both have heavy-tail correlations and can ensure existence
      of a nontrivial invariant measure.

  \end{enumerate}

  \item Due to the importance of the Dirac delta initial
    condition~\cite{amir.corwin.ea:11:probability} and the stationary initial
    condition (i.e., the two-sided Brownian motion as the initial data) pointed
    out in~\cite{bertini.cancrini:95:stochastic}, it would be preferable to be
    able to include more general initial conditions, that are neither bounded at
    one point nor at infinity. Chen and Dalang~\cite{chen.dalang:15:moments}
    introduced the so-called \textit{rough initial conditions} to cover these
    unbounded initial conditions; see also~\cite{chen.huang:19:comparison,
    chen.kim:19:nonlinear}. To be more precise, a rough initial condition $\mu$
    refers to a deterministic, locally finite, regular, signed Borel measure
    that satisfies the following integrability condition at infinity:
    \begin{equation}\label{A:icon}
      \int_{\R^d} \left\lvert\mu\right\rvert(\ud x) \exp\left(-a\lvert x\rvert^2\right) < \infty \quad \text{for all $a>0$},
    \end{equation}
    where $\lvert\mu\rvert = \mu_+ + \mu_-$ and $\mu = \mu_+-\mu_-$ refer to the
    \textit{Hahn decomposition} of the measure $\mu$. The work by Chen and
    Eisenberg~\cite{chen.eisenberg:24:invariant} allows the rough initial
    condition.

\end{enumerate}

\begin{remark}\label{R:GuLi}
  We would like to mention that, in terms of techniques, most of the above works
  are based on the Krylov-Bogoliubov theorem
  (see~\cite{da-prato.zabczyk:96:ergodicity, da-prato.zabczyk:14:stochastic,
  kryloff.bogoliouboff:37:theorie} as well as relations~\eqref{E:ave}
  and~\eqref{E:tight} above). However, the work by Gu and
  Li~\cite{gu.li:20:fluctuations} uses a different argument by shifting the
  initial time to negative infinity. As a result, they obtain a stronger result.
  The convergence there is a strong convergence. The arguments rely critically
  on the stationarity of the solution $u(t,x)$, which is a consequence of the
  constant one initial condition. Nevertheless, Gu and Li pointed out that this
  constant initial condition can be perturbed by an $L^\infty(\R^d) \cap
  L^1(\R^d)$ function; see Remark~3.6 (\textit{ibid.}). In particular, it is not
  clear in their framework whether the constant initial condition can be
  perturbed by a Dirac delta measure. We will go back to this approach in
  Section~\ref{SS:Gu-Li}.
\end{remark}

\subsection{Phase transition for the moments and invariant measures}\label{SS:Phase-Trans}

In this part, let us state precisely the phase transition phenomenon under
conditions~\eqref{E:Upsilon_0} and~\eqref{E:SpectralCond}. Recall that the
solution to~\eqref{E:SHE} is understood as the mild solution:
\begin{align}\label{E:mild}
  u(t,x) = J_0(t,x) + \int_0^t \int_{\R^d} p(t-s,x-y)\, b(y, u(s,y))W(\ud s,\ud y),
\end{align}
where the stochastic integral is interpreted as the Walsh integral
(\cite{dalang:99:extending, walsh:86:introduction}) and $J_0(t,x)$ refers to the
solution to the homogeneous equation, namely,
\begin{align}\label{E:def-J0}
  J_0(t,x) = J_0(t,x;\mu) \coloneqq \int_{\R^d} p(t,x-y)\mu(\ud y) = [p_t*\mu] (x).
\end{align}
Here and throughout the rest of the paper, we use $p(t,x)$, or sometimes
$p_t(x)$, to denote the heat kernel: $p(t,x) = p_t(x) \coloneqq (2\pi
t)^{-d/2}\exp\left(- |x|^2/(2t)\right)$.

The first result asserts that under the cone condition~\eqref{E:Cone} below, the
second moments of the solution exhibit a phase transition depending on the noise
intensity, namely, the values of $L_b$ and $l_p$ in~\eqref{E:lipcon}
and~\eqref{E:Cone}, respectively. Let us first define the \textit{upper and
lower (moment) Lyapunov exponents of order $p$} ($p\ge 2$) by
\begin{align}\label{E:Lypnv-x}
  \overline{m}_p (x) \coloneqq \mathop{\lim\sup}_{t\rightarrow+\infty} \frac{1}{t}\log\E\left(|u(t,x)|^p\right),\quad
  \underline{m}_p(x) \coloneqq \mathop{\lim\inf}_{t\rightarrow+\infty} \frac{1}{t}\log\E\left(|u(t,x)|^p\right).
\end{align}
Then we get the following bounds on the exponents
$\sup_{x\in\R^d}\overline{m}(x)$ and $\inf_{x\in\R^d}\underline{m}(x)$ under
hypothesis~\eqref{E:Upsilon_0}.

\begin{theorem}[Theorem 1.3 of~\cite{chen.kim:19:nonlinear}]\label{T:Phase}
  Let $u$ be the solution to~\eqref{E:SHE} starting from an initial condition
  $\mu$, which is a nonnegative Borel measure on $\R^d$ such that
  \begin{align}\label{E:LinearExpID}
    \int_{\R^d} e^{-\gamma |x|} \mu(\ud x) < \infty \, , \quad \text{for all $\gamma >0$.}
  \end{align}
  Assume $b(x,u) = b(u)$ in~\eqref{E:SHE} satisfies the cone condition:
  \begin{align}\label{E:Cone}
    l_b \coloneqq \inf_{x\in\R} \frac{b(x)}{|x|} > 0.
  \end{align}
  Recall that the covariance of the noise is given by~\eqref{E:Cor} and is
  specified by a correlation measure $f$ satisfying~\eqref{E:Dalang}. Also
  recall that $\Upsilon(0)$ is given by~\eqref{E:Upsilon_0}. Then the following
  holds true:

  \noindent \emph{(i)} If $\Upsilon(0)<\infty$, then for some nonnegative
  constants $0<\underline{\lambda}_c\le \overline{\lambda}_c<\infty$, it holds
  that
  \begin{align}\label{E:Phase}
    \begin{dcases}
      \sup_{x\in\R^d} \overline{m}_2 (x) = 0, & \text{if $L_b < \underline{\lambda}_c$}, \\
      \inf_{x\in\R^d} \underline{m}_2(x) > 0, & \text{if $l_b > \overline{\lambda}_c$},
    \end{dcases}
  \end{align}
  where $L_b$ is the Lipschitz constant as given in~\eqref{E:lipcon}.

  \noindent \emph{(ii)} If $\Upsilon(0)=\infty$, then $u(t,x)$ is fully
  intermittent, i.e., $\overline{m}_1(x)\equiv 0$ and
  $\inf_{x\in\R^d}\underline{m}_2(x)>0$.

  \noindent \emph{(iii)} The following two conditions are equivalent:
  \begin{equation}\label{d1}
    \Upsilon(0)<\infty  \quad \Longleftrightarrow \quad
    d\ge 3 \quad \text{and} \quad \int_{\R^d} \frac{f(z)}{|z|^{d-2}}\ud z <\infty.
  \end{equation}
\end{theorem}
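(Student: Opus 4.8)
The plan is to reduce everything to a second-moment analysis and then run a renewal (Laplace transform) argument in which the analytic quantity $\Upsilon(\beta)$ turns out to be \emph{exactly} the Laplace transform of the natural Volterra kernel. First I would square the mild formula~\eqref{E:mild} and apply the It\^o-type isometry~\eqref{E:inner-v} to the correlation $H(t,x,y):=\E[u(t,x)u(t,y)]$, obtaining
\[
H(t,x,y) = J_0(t,x)J_0(t,y) + \int_0^t \ud s \iint_{\R^{2d}} p(t-s,x-z)\,p(t-s,y-z')\, f(z-z')\, \E\!\left[b(u(s,z))\,b(u(s,z'))\right] \ud z\, \ud z'.
\]
The key computation is that the spatially integrated heat kernel reproduces $\Upsilon$: setting $K(r):=\iint p_r(z)p_r(z')f(z-z')\,\ud z\,\ud z'$ and using Plancherel with $\widehat{p_r}(\xi)=e^{-r|\xi|^2/2}$,
\[
K(r) = (2\pi)^{-d}\int_{\R^d} e^{-r|\xi|^2}\,\widehat f(\ud\xi), \qquad \int_0^\infty e^{-\beta r} K(r)\, \ud r = \Upsilon(\beta).
\]
This identity is the bridge between the dynamics and hypothesis~\eqref{E:Upsilon_0}.

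For the upper bound in (i) I would use the Lipschitz bound $|b(u)|\le L_b|u|$ from~\eqref{E:lipcon}--\eqref{E:b-vanish-0} together with Cauchy--Schwarz to get $\E[b(u(s,z))b(u(s,z'))]\le L_b^2\,\mathcal M(s)$, where $\mathcal M(s):=\sup_{x}\E[u(s,x)^2]$. Since $f\ge0$ and $p\ge0$, taking the supremum over $x$ yields the scalar Volterra inequality
\[
\mathcal M(t) \le \sup_{x\in\R^d} J_0(t,x)^2 + L_b^2 \int_0^t K(t-s)\, \mathcal M(s)\, \ud s .
\]
Comparing $\mathcal M$ with the resolvent of $L_b^2 K$, whose Laplace transform is $L_b^2\Upsilon(\beta)/(1-L_b^2\Upsilon(\beta))$, shows that if $L_b^2\Upsilon(0)<1$ the resolvent is integrable, $\mathcal M$ stays bounded, and hence $\sup_x\overline m_2(x)\le0$. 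The matching inequality $\ge0$ is free: since $u\ge0$ one has $\E[u(t,x)^2]\ge J_0(t,x)^2$ and $J_0$ decays only polynomially, so $\tfrac1t\log J_0(t,x)^2\to0$. This identifies $\underline\lambda_c$ with (a constant multiple of) $\Upsilon(0)^{-1/2}$.

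The delicate part, and the main obstacle, is the lower bound $\inf_x\underline m_2>0$ in (i) and (ii), because the second-moment formula carries the \emph{signed} product $b(u(s,z))b(u(s,z'))$ and the off-diagonal correlations cannot simply be discarded. My plan is first to establish nonnegativity of the solution (from $\mu\ge0$ and the cone condition~\eqref{E:Cone}, which forces $b\ge0$), so that $\E[b(u(s,z))b(u(s,z'))]\ge l_b^2\,\E[u(s,z)u(s,z')]\ge0$. This converts the Volterra equation into a lower-bound inequality with the \emph{same} positive kernel, and a Perron--Frobenius/renewal analysis then produces exponential growth at rate $\beta^*>0$ solving $l_b^2\Upsilon(\beta^*)=1$, valid whenever $l_b^2\Upsilon(0)>1$; this yields $\overline\lambda_c$. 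Extracting a clean lower bound on the off-diagonal correlations \emph{uniformly in $x$} (so as to reach $\inf_x$) is where the technical losses sit and likely explains the possible gap $\underline\lambda_c\le\overline\lambda_c$. Part (ii) is the limiting case: when $\Upsilon(0)=\infty$ one has $l_b^2\Upsilon(0)=\infty>1$ for every $l_b>0$, so intermittency is unconditional, while $\overline m_1\equiv0$ is immediate from $\E[u(t,x)]=J_0(t,x)$ and the polynomial decay of $J_0$.

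Finally, part (iii) is a self-contained Fourier computation. Recognizing $|\xi|^{-2}$ as the symbol of $(-\Delta)^{-1}$, whose inverse Fourier transform is the Riesz kernel $c_d|z|^{-(d-2)}$ precisely when $0<2<d$, Parseval applied to the pairing of the spectral measure $\widehat f$ with $|\xi|^{-2}$ gives
\[
\Upsilon(0) = (2\pi)^{-d}\int_{\R^d}\frac{\widehat f(\ud\xi)}{|\xi|^2} = c_d \int_{\R^d} \frac{f(z)}{|z|^{d-2}}\, \ud z .
\]
For $d\le2$ the symbol $|\xi|^{-2}$ fails to be locally integrable at the origin, so $\Upsilon(0)=\infty$ for any nontrivial noise; this forces $d\ge3$, after which the Riesz integral records the remaining integrability condition, establishing the claimed equivalence.
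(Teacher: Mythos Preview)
The paper does not prove this theorem; it is quoted verbatim as Theorem~1.3 of Chen--Kim~\cite{chen.kim:19:nonlinear}, and the surrounding text (Remarks~\ref{R:Phase-Significant} and~\ref{R:Phase-Additional}, together with the short proof of Theorem~\ref{T:BddMnt}) only points back to that reference. So there is no in-paper proof to compare against directly.

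That said, your renewal/Laplace-transform strategy for the \emph{upper} moment bound in~(i) is exactly the mechanism behind Chen--Kim's argument and the sketch given here for Theorem~\ref{T:BddMnt}: the identity $\int_0^\infty e^{-\beta r}K(r)\,\ud r=\Upsilon(\beta)$ is precisely what drives the function $H(t;\gamma)$ invoked in that proof, and your threshold $L_b^2\Upsilon(0)<1$ matches (up to the constant discussed in Remark~\ref{R:Factor64}) condition~\ref{it:iii-InvMea} of Theorem~\ref{T:InvMea}. Part~(iii) via Parseval and the Riesz potential is likewise the standard route.

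For the \emph{lower} bound your plan has the right shape but two genuine gaps. First, nonnegativity of $u$ is not a consequence of $\mu\ge0$ and the cone condition alone; it requires a comparison principle for the stochastic heat equation (e.g.\ \cite{chen.huang:19:comparison}), which you should invoke explicitly. Second, and more seriously, turning the two-point inequality $\E[b(u(s,z))b(u(s,z'))]\ge l_b^2\,\E[u(s,z)u(s,z')]$ into exponential growth via a ``Perron--Frobenius/renewal analysis'' is substantially harder than your sketch conveys: the Volterra operator now acts on the full two-point function $H(s,z,z')$ rather than on a scalar, and extracting a growth rate that survives the infimum over~$x$ is exactly what Remark~\ref{R:Phase-Additional} flags as the most challenging part of~\cite{chen.kim:19:nonlinear}, carried out there without access to Feynman--Kac moment formulas. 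You have correctly located the difficulty, but the argument you outline does not yet close it.
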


\begin{remark}\label{R:Phase-Significant}
  Moments estimates like those in Theorem~\ref{T:Phase} are an important step
  towards ergodicity, as assessed by Theorem~\ref{T:MomCond}. In the current
  case, relation~\eqref{E:Phase} for $L_{b}\le\underline{\lambda}_c$ is an
  indication that a Krylov-Bogoliubov type argument towards ergodicity can be
  applied. On the other hand, relation~\eqref{E:Phase} for $l_b >
  \overline{\lambda}_c$ rules out any type of ergodic result. This is the
  announced phase transition phenomenon, in terms of coefficients which
  represent \emph{noise intensity}.
\end{remark}

\begin{remark}
  As part of Theorem~\ref{T:Phase}, relation~\eqref{d1} asserts that the phase
  transition phenomenon can only occur in dimension 3 or higher.
\end{remark}

In Theorem~\ref{T:Phase}, we have only considered initial conditions with power
growth at infinity, which are tempered/Schwarz measures. The choice of such
initial conditions is due to the fact that the corresponding solution to the
homogeneous equation $J_0(t,x)$ will only induce trivial contributions to the
Lyapunov exponent. However, if we do not focus only on Lyapunov exponents, the
paper~\cite{chen.kim:19:nonlinear} establishes the following pointwise moment
bounds in case of an initial condition with exponential growth:

\begin{theorem}\label{T:BddMnt}
  Let $u$ be the solution to~\eqref{E:SHE} starting from a nonnegative rough
  initial data $\mu$, namely, $\mu$ is is a nonnegative Borel measure on $\R^d$
  such that~\eqref{A:icon} holds. Suppose that the diffusion coefficient
  $b(x,u)=b(u)$ satisfies the following degenerate condition:
   \begin{equation}\label{E:Cond-b}
        b(0) = 0\;, \quad \text{and} \quad
        |b(u_2)-b(u_1)|\le L_b |u_2-u_1|.
      \end{equation}
  Moreover, suppose that $4 L_b^2 \Upsilon (0) < 1$, namely,
  condition~\eqref{E:Upsilon_0} holds and $L_b<(2\Upsilon(0))^{-1/2}$. Then
  \begin{align}\label{E:BddMnt}
    \E\left[u(t,x)^2\right] \le \frac{J_0^2(t,x)}{L_b^2\left[1-4L_b^2\Upsilon(0)\right]}, \quad \text{for all $t\ge 0$ and $x\in\R^d$},
  \end{align}
  where we recall that $J_0 (t,x)$ is defined by~\eqref{E:def-J0}.
\end{theorem}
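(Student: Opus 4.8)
The plan is to run the Picard scheme for the mild formulation~\eqref{E:mild} and to control the second moments of the successive iterates by a geometric series whose ratio is exactly $4L_b^2\Upsilon(0)$. Writing $u^{(0)}\coloneqq J_0$ and $u^{(n+1)}(t,x)\coloneqq J_0(t,x)+\int_0^t\int_{\R^d}p(t-s,x-y)\,b(u^{(n)}(s,y))\,W(\ud s,\ud y)$, I would set $\mathcal{M}_n(t,x)\coloneqq\E[u^{(n)}(t,x)^2]$. Since the Walsh integral has zero mean, the It\^o isometry~\eqref{E:inner-v} gives
\[
  \mathcal{M}_{n+1}(t,x)=J_0(t,x)^2+\int_0^t\ud s\iint_{\R^{2d}}p(t-s,x-y)p(t-s,x-y')f(y-y')\,\E\!\left[b(u^{(n)}(s,y))b(u^{(n)}(s,y'))\right]\ud y\,\ud y'.
\]
The degenerate condition~\eqref{E:Cond-b}, namely $b(0)=0$ together with the Lipschitz bound, yields $|b(w)|\le L_b|w|$, so by Cauchy--Schwarz the expectation inside is at most $L_b^2\sqrt{\mathcal{M}_n(s,y)\,\mathcal{M}_n(s,y')}$.

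Next I would prove the pointwise bound $\mathcal{M}_n(t,x)\le C_n\,J_0(t,x)^2$ by induction, where $C_0=1$ and $C_{n+1}=1+4L_b^2\Upsilon(0)\,C_n$. Feeding the inductive hypothesis $\sqrt{\mathcal{M}_n(s,y)\mathcal{M}_n(s,y')}\le C_n\,J_0(s,y)J_0(s,y')$ into the display above reduces the whole induction to the single deterministic estimate
\[
  \mathcal{I}(t,x)\coloneqq\int_0^t\ud s\iint_{\R^{2d}}p(t-s,x-y)p(t-s,x-y')f(y-y')J_0(s,y)J_0(s,y')\,\ud y\,\ud y'\ \le\ 4\,\Upsilon(0)\,J_0(t,x)^2.
\]
Granting this, $\mathcal{M}_{n+1}\le(1+4L_b^2\Upsilon(0)C_n)J_0^2$ closes the induction, and the hypothesis $4L_b^2\Upsilon(0)<1$ makes $C_n=\sum_{k=0}^{n}(4L_b^2\Upsilon(0))^k$ converge to $(1-4L_b^2\Upsilon(0))^{-1}$. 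Passing to the limit (the Picard iterates converge in $L^2(\Omega)$ to $u$ under Dalang's condition~\eqref{E:Dalang}, which is implied by $\Upsilon(0)<\infty$ in~\eqref{E:Upsilon_0}) then delivers the bound~\eqref{E:BddMnt}, with the singular prefactor $(1-4L_b^2\Upsilon(0))^{-1}$ encoding the critical noise intensity.

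The heart of the argument, and the step I expect to be most delicate, is the estimate on $\mathcal{I}(t,x)$. The plan is to substitute $J_0(s,y)=\int_{\R^d}p(s,y-w)\mu(\ud w)$ and to use the Gaussian product identity $p(t-s,x-y)p(s,y-w)=p(t,x-w)\,\rho_{s,w}(y)$, where $\rho_{s,w}$ is a normalized Gaussian density in $y$ with variance $\tfrac{s(t-s)}{t}$ and a mean depending on $w$ and $x$. Integrating the two inner Gaussians against $f$ turns the spatial double integral into $[f*p(2v,\cdot)]$ evaluated at the difference of the two means, with $v=\tfrac{s(t-s)}{t}$; since the spectral measure $\widehat f$ is nonnegative, the cosine in the Fourier representation is bounded by $1$, so this quantity is maximized at zero shift and is dominated by $(2\pi)^{-d}\int_{\R^d}e^{-v|\xi|^2}\widehat f(\ud\xi)$. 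Because $\mu\ge0$, every cross term carries a nonnegative weight, and the $w,w'$ integrations collapse back into $J_0(t,x)^2$, leaving the scalar quantity $(2\pi)^{-d}\int_{\R^d}\widehat f(\ud\xi)\int_0^t e^{-\frac{s(t-s)}{t}|\xi|^2}\ud s$.

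The final quantitative input is the elementary bound $\int_0^t e^{-\frac{s(t-s)}{t}|\xi|^2}\ud s\le 4|\xi|^{-2}$, obtained by splitting the integral at $s=t/2$ and using $\tfrac{s(t-s)}{t}\ge\tfrac{s}{2}$ on $[0,t/2]$ and symmetrically on $[t/2,t]$; this is exactly where both the constant $4$ and the quantity $\Upsilon(0)=(2\pi)^{-d}\int_{\R^d}|\xi|^{-2}\widehat f(\ud\xi)$ emerge. The main obstacle is therefore the uniform-in-$(t,x)$ control of the $f$-correlated double convolution by $J_0(t,x)^2$ for a \emph{general} nonnegative initial measure, rather than for a point mass: the nonnegativity of both $\mu$ and $\widehat f$ is precisely what legitimizes the shift-maximization and keeps the resulting constant dimension-free and independent of the initial data.
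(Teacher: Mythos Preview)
Your argument is correct and carries out explicitly what the paper's proof defers to citations. The paper simply invokes moment formulae from~\cite{chen.kim:19:nonlinear}---an upper bound $\E[u(t,x)^2]\le L_b^{-2}J_0^2(t,x)\,H(t;2L_b^2)$ together with the estimate $H(t;2L_b^2)\le(1-4L_b^2\Upsilon(0))^{-1}$---whereas you run the Picard scheme directly and isolate the key deterministic inequality $\mathcal I(t,x)\le 4\Upsilon(0)\,J_0(t,x)^2$ via the Gaussian product identity and the elementary Fourier bound $\int_0^t e^{-s(t-s)|\xi|^2/t}\,\ud s\le 4|\xi|^{-2}$. Both routes rest on the same contraction mechanism (the paper itself says the proof ``uses the contraction coefficient $L_b$ in a Gronwall-type lemma''), but yours is self-contained and yields the constant $(1-4L_b^2\Upsilon(0))^{-1}$ without the extra factor $L_b^{-2}$. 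One caveat: this means your bound is \emph{sharper} than~\eqref{E:BddMnt} when $L_b\le 1$, but for $L_b>1$ (which is permitted whenever $\Upsilon(0)<1/4$) it does not literally imply the stated inequality---it is still a valid upper bound, just with a different constant than the one written in the theorem.
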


\begin{remark}
  From this point onward, we have chosen to work with a coefficient $b$ which
  only depends on the variable $u$. Generalizations to coefficients $b(x,u)$ are
  possible.
\end{remark}

\begin{proof}[Proof of Theorem~\ref{T:BddMnt}]
  The proof uses the contraction coefficient $L_b$ in a Gronwall type lemma,
  very similarly to what will be done in~\eqref{E:b1}. We thus spare the details
  for sake of conciseness. Moreover, this type of estimate has already been
  carried out in~\cite{chen.kim:19:nonlinear}. Namely note that the degenerate
  condition~\eqref{E:Cond-b} implies that $|b(x,u)|\le L_b|u|$ for all $u\in\R$
  uniformly in $x\in\R^d$. Hence, we can invoke part (2) of Theorem 2.4
  in~\cite{chen.kim:19:nonlinear}. After applying (2.10)
  of~\cite{chen.kim:19:nonlinear} with $x=x'$ followed by (2.19)
  of~\cite{chen.kim:19:nonlinear} with $\nu=1$ and $\lambda = L_b$, we see that
  \begin{align*}
    \E\left[u(t,x)^2\right] \le L_b^{-2} J_0^2(t,x) H(t; 2L_b^2),
  \end{align*}
  where $H(t;\gamma)$ is defined in (2.14) of~\cite{chen.kim:19:nonlinear} with
  $y=0$ and $\nu=1$. Then by the second part of Lemma 2.5
  of~\cite{chen.kim:19:nonlinear} with $\nu=1$ and $\gamma = 2 L_b^2$, when
  $\Upsilon(0)<\infty$ and $4L_b^2\Upsilon(0)<1$, then
  \begin{align*}
    H(t; 2L_b^2) \le \frac{1}{1-4L_b^2\Upsilon(0)}, \quad \text{for all $t\ge 0$.}
  \end{align*}
  Combining these two bounds proves Theorem~\ref{T:BddMnt}.
\end{proof}

\begin{remark}\label{R:Phase-Additional}
  Here are some additional comments: (1) The most challenging aspect of the
  paper~\cite{chen.kim:19:nonlinear} is the demonstration that the
  \textit{lower} bound of the Lyapunov exponent is strictly positive, i.e., the
  second relationship in~\eqref{E:Phase}. The difficulty comes from the rough
  initial condition and the lack of Feynman-Kac representations for the moments
  for the nonlinear SHE. (2) By the Burkholder-Gundy-Davis inequality (see the
  version in~\cite[Theorem 1.4]{conus.khoshnevisan:12:on}), one can easily
  extend the boundedness of the second moment to the case of $p$-th moments with
  $p \ge 2$ and $L_0>0$ in~\eqref{E:lipcon}; see Theorem~1.7
  of~\cite{chen.huang:19:comparison} for more details.
\end{remark}

Boundedness of the moments in~\eqref{E:BddMnt} paves the way for the existence
of invariant measures. This was already asserted in Da Prato-Zabczyk's setting,
as recalled in~\eqref{E:MomCond} and Remark~\ref{R:Phase-Significant}. However,
one needs to embed the random field solution in some Hilbert space. The moment
estimates in this setup are a straightforward application of
Theorem~\ref{T:Phase}, which is given by Theorem~\ref{T:MomBdd} below. For any
locally integrable and nonnegative function $\rho:\R^d\to\R_+$, denote by
$L^2_{\rho}(\R^d)$ the Hilbert space of $\rho$-weighted square integrable
functions. We will use $\langle \cdot, \cdot \rangle_{\rho}$ and
$\Norm{\cdot}_{\rho}$ to denote the corresponding inner product and norm,
respectively:
\begin{align}\label{E:WeightedNorm}
  \langle f,g \rangle_{\rho} \coloneqq \int_{\R^d} f(x) g(x) \rho(x) \ud x \, , \quad \text{and} \quad
  \Norm{f}_{\rho} \coloneqq \int_{\R^d} |f(x)|^2 \rho(x) \ud x.
\end{align}
The next theorem states that, under proper conditions on the noise structure and
the intensity of the noise (namely, the value of $L_b$), the time dependence of
the moments of the solution in any weighted $L^2(\R^d)$ space comes solely from
the contribution of the initial condition.

\begin{theorem}[Theorem 1.1 of~\cite{chen.eisenberg:24:invariant} for the degenerate diffusion coefficient case]\label{T:MomBdd}

  Let $u(t,x; \mu)$ be the solution to~\eqref{E:SHE} starting from a rough
  initial condition $\mu$ which satisfies~\eqref{A:icon}. Assume that
  \begin{enumerate}[wide, labelwidth=!, labelindent=0pt, label=\emph{(\roman*)}]
    \setlength\itemsep{.1in}

    \item the diffusion coefficient $b$ satisfies the degenerate
      condition~\eqref{E:Cond-b};

    \item $\rho: \R^d \to \R_+$ is a nonnegative $L^1 (\R^d)$ function;

    \item for all $t > 0$, the initial condition $\mu$ satisfies the condition
      that $\mathcal{G}_{\rho}(t;|\mu|) < \infty$, where
      \begin{equation}\label{E:G}
        \mathcal{G}_{\rho}(t;\mu)
        \coloneqq \int_{\R^d} \left[\left(p(t,\cdot) * \mu\right)(x) \right]^2 \rho(x)\: \ud x;
      \end{equation}

    \item the spectral measure $\widehat{f}$ satisfies~\eqref{E:Upsilon_0},
      i.e., $\Upsilon(0)<\infty$, and the Lipschitz constant $L_b$ is small
      enough so that $4 L_b^2\Upsilon(0) < 1$.

  \end{enumerate}
  Then there exists a unique $L^2 (\Omega)$-continuous solution $u(t,x)$ such
  that for some constant $C>0$, which does not depend on $t$, the following
  holds:
  \begin{equation}\label{E:uInLrho}
    \E\left(\Norm{u(t,\cdot; \mu)}_{\rho}^2\right)
    \le C \mathcal{G}_{\rho}\left(t; |\mu|\right)
    < \infty, \quad \text{for any $t > 0$},
  \end{equation}
  where we recall that the norm $\Norm{\cdot}_\rho$ is given
  by~\eqref{E:WeightedNorm}.
\end{theorem}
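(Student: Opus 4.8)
The plan is to deduce the weighted bound~\eqref{E:uInLrho} from a \emph{pointwise} second-moment estimate and then integrate against the weight $\rho$. Reading $\Norm{u(t,\cdot;\mu)}_\rho^2$ as the weighted $L^2$ energy $\int_{\R^d}|u(t,x)|^2\rho(x)\ud x$ intended by~\eqref{E:WeightedNorm}, Tonelli's theorem gives $\E\Norm{u(t,\cdot;\mu)}_\rho^2=\int_{\R^d}\E[u(t,x)^2]\rho(x)\ud x$, while by~\eqref{E:G} and~\eqref{E:def-J0} the target right-hand side is $C\int_{\R^d}J_0(t,x;|\mu|)^2\rho(x)\ud x$. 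Hence it suffices to establish the pointwise inequality
\begin{align}\label{E:pointwise-target}
  \E\left[u(t,x)^2\right]\le C\, J_0(t,x;|\mu|)^2,\qquad\text{for all }t>0,\ x\in\R^d,
\end{align}
with a constant $C$ independent of $t$ and $x$, and then integrate in $x$.

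The first, and main, step is therefore to prove~\eqref{E:pointwise-target}. For a \emph{nonnegative} initial measure this is exactly the content of Theorem~\ref{T:BddMnt}, which yields~\eqref{E:pointwise-target} with the explicit value $C=\big(L_b^2[1-4L_b^2\Upsilon(0)]\big)^{-1}$. To extend this to a signed rough initial condition $\mu=\mu_+-\mu_-$ obeying~\eqref{A:icon}, I would start from the mild formulation~\eqref{E:mild}, apply the It\^o-type identity~\eqref{E:inner-v}, and use the degenerate Lipschitz bound $|b(y,u)|\le L_b|u|$ (immediate from~\eqref{E:Cond-b}) together with Cauchy--Schwarz on the covariance of $b$ to obtain the renewal inequality
\begin{align*}
  \E\left[u(t,x)^2\right]
  \le J_0(t,x;\mu)^2
   + L_b^2\iot\ud s\iint_{\R^{2d}} f(y-y')\,p(t-s,x-y)\,p(t-s,x-y')\,\sqrt{\E[u(s,y)^2]\,\E[u(s,y')^2]}\,\ud y\,\ud y'.
\end{align*}
Since $|J_0(t,x;\mu)|\le (p_t*|\mu|)(x)=J_0(t,x;|\mu|)$, the inhomogeneous term is dominated by $J_0(t,x;|\mu|)^2$, so the whole inequality is controlled by the one driven by $|\mu|$. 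Feeding this into the Gronwall-type lemma of Appendix~\ref{S:Gronwall} — exactly as in the proof of Theorem~\ref{T:BddMnt}, cf.\ the argument around~\eqref{E:b1} — the Picard iteration closes and sums to a finite, $t$-independent multiple of $J_0(t,x;|\mu|)^2$ \emph{precisely because} $4L_b^2\Upsilon(0)<1$, giving~\eqref{E:pointwise-target}.

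With~\eqref{E:pointwise-target} in hand the conclusion is immediate: multiplying by $\rho(x)\ge0$ and integrating over $\R^d$, Tonelli's theorem gives
\begin{align*}
  \E\left[\int_{\R^d}|u(t,x)|^2\rho(x)\,\ud x\right]
  \le C\int_{\R^d} J_0(t,x;|\mu|)^2\rho(x)\,\ud x
  = C\,\mathcal{G}_\rho(t;|\mu|),
\end{align*}
which is~\eqref{E:uInLrho}. Finiteness of the right-hand side for every $t>0$ is exactly hypothesis~(iii); the integrability condition~\eqref{A:icon} together with $\rho\in L^1(\R^d)$ ensures that the convolution $J_0(t,x;|\mu|)=(p_t*|\mu|)(x)$ is finite for each $(t,x)$, so that~\eqref{E:pointwise-target} is meaningful. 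Existence, uniqueness and $L^2(\Omega)$-continuity of the solution are not part of the moment estimate: they follow from the well-posedness theory for~\eqref{E:SHE} under rough initial data developed in~\cite{chen.kim:19:nonlinear}, applicable here because $\Upsilon(0)<\infty$ forces Dalang's condition~\eqref{E:Dalang}.

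I expect the only real obstacle to lie in the first step, namely upgrading the pointwise moment identity to~\eqref{E:pointwise-target} with a constant \emph{uniform in} $t$. The difficulty is structural: the variance term reproduces the convolution profile $J_0(\cdot;|\mu|)^2$ at each stage of the iteration, and it is solely the smallness condition $4L_b^2\Upsilon(0)<1$ that forces the resulting geometric series (whose ratio is, after Plancherel, governed by the factor $\Upsilon(0)$) to converge to a $t$-independent sum — a merely $t$-dependent finite bound would be useless for the ergodic analysis that follows. The subordinate points are handling the signed initial condition by domination through $|\mu|$ and verifying that the heat-semigroup algebra interacts with the spatial covariance $f$ so as to produce exactly $\Upsilon(0)$; once these are secured, the passage to the weighted space is routine.
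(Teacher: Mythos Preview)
Your proposal is correct and follows the same route the paper indicates: just before Theorem~\ref{T:MomBdd} the paper states that ``the moment estimates in this setup are a straightforward application'' of the preceding pointwise bound, and you carry out precisely this---integrate the pointwise estimate~\eqref{E:BddMnt} against the weight~$\rho$ via Tonelli. Your additional care with signed initial data, dominating $J_0(t,x;\mu)^2$ by $J_0(t,x;|\mu|)^2$ before iterating, fills the small gap between Theorem~\ref{T:BddMnt} (stated for nonnegative~$\mu$) and the signed case allowed here.
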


Recall that for the existence of an invariant measure, we will need moments
bounded in $t$ (see~\eqref{E:MomCond}). However, the moment upper bound
in~\eqref{E:uInLrho} may still blow up as $t\to\infty$; see Proposition 5.1
of~\cite{chen.eisenberg:24:invariant} for one example. Given the continuity of
$t\to \mathcal{G}_\rho \left(t; |\mu|\right)$, one needs to impose
condition~\eqref{E:InitData} below.

Note that Theorem~\ref{T:MomBdd} above imposes no additional restrictions on the
weight function, as long as it is nonnegative and integrable. However, due to
the non-compactness of the ambient space $\R^d$, in order to extend the heat
semigroup to a $C_0$-semigroup on a weighted $L^2(\R^d)$ space one cannot use
arbitrary weight functions. The notion of admissible weight functions plays an
important role.

\begin{definition}[\cite{tessitore.zabczyk:98:invariant}]\label{D:Rho}
  A function $\rho:\R^d\mapsto \R$ is called an \textup{admissible weight
  function} if it is a strictly positive, bounded, continuous, and $L^1
  (\R^d)$-integrable function such that for all $T > 0$, there exists a constant
  $C_\rho(T)$ satisfying
  \begin{equation}\label{E:aw}
    \big(p(t,\cdot)*\rho(\cdot)\big)(x) \le C_\rho(T) \rho(x)
    \quad \text{for all $t\in[0,T]$ and $x\in\R^d$.}
  \end{equation}
\end{definition}
As proved in Proposition 2.1 of~\cite{tessitore.zabczyk:98:invariant}, the
weighted spaces of Definition~\ref{D:Rho} have nice compactness properties.
Namely, for any admissible functions $\rho$ and $\tilde{\rho}$, if
\begin{align}\label{E:rhorho}
  \int_{\R^d}\dfrac{\rho(x)}{\tilde{\rho}(x)}\ud x < \infty,
\end{align}
then for all $t > 0$, the heat semigroup is compact from
$L_{\tilde{\rho}}(\R^d)$ to $L_\rho(\R^d)$. Canonical choices of the admissible
weight functions include the following examples:
\begin{equation}\label{E:admRho}
  \rho(x)             = \exp(-a|x|)               \quad a>0, \quad \text{and} \quad
  \widetilde{\rho}(x) = \left(1+|x|^a\right)^{-1} \quad a>d.
\end{equation}

Finally, the next theorem integrates various components, including initial data,
the noise structure (via the correlation function), the intensity of the noise
(via the value of $L_b$), and admissible weight functions, to establish the
existence of an invariant measure.

\begin{theorem}[Theorem 1.3 of~\cite{chen.eisenberg:24:invariant} for the degenerate diffusion coefficient case]\label{T:InvMea}

  Let $u(t,x)$ be the solution to~\eqref{E:SHE} starting from a rough initial
  condition $\mu$, namely, $\mu$ is a signed Borel measure on $\R^d$ such
  that~\eqref{A:icon} holds. Assume that
  \begin{enumerate}[wide, labelwidth=!, labelindent=0pt, label=\emph{(\roman*)}]
    \setlength\itemsep{.1in}

    \item \label{it:i-InvMea} $b(x,u)=b(u)$ and it verifies condition~\eqref{E:Cond-b}.

    \item \label{it:ii-InvMea} there are two admissible weight functions $\rho$
      and $\tilde{\rho}$ such that~\eqref{E:rhorho} holds and
      \begin{equation}\label{E:InitData}
        \limsup_{t\to\infty}\: \mathcal{G}_{\tilde{\rho}}(t;|\mu|) < \infty;
      \end{equation}

    \item \label{it:iii-InvMea} the spectral measure $\widehat{f}$
      satisfies~\eqref{E:Upsilon_0}, i.e., $\Upsilon(0)<\infty$ and the
      Lipschitz constant $L_b$ is small enough such that $4 L_b^2 \Upsilon(0)
      <1$;

    \item \label{it:iv-InvMea} for some $\alpha \in
      \left(4L_b^2\Upsilon(0),1\right)$, the spectral measure $\widehat{f}$
      satisfies the following condition
      \begin{align}\label{E:Dalang-ab}
        \Upsilon_\alpha(\beta) \coloneqq \left(2\pi\right)^{-d} \int_{\R^d} \frac{\widehat{f}(\ud\xi)}{\left(\beta+\lvert\xi\rvert^2\right)^{1-\alpha}}<\infty
        \quad \text{for some (hence all) $\beta>0$.}
      \end{align}

  \end{enumerate}
  Then we have that
  \begin{enumerate}[wide, labelwidth=!, labelindent=0pt, label=\emph{(\arabic*)}] \setlength\itemsep{.1in}

    \item for any $\tau>0$, the sequence of laws of
      $\{\mathcal{L}u(t,\cdot;\mu)\}_{t\ge \tau}$ is tight, i.e., for any
      $\epsilon \in (0,1)$, there exists a compact set $\mathscr{K}\subset
      L^2_{\rho}(\R^d)$ such that
      \begin{equation}\label{E:cpt}
        \mathscr{L}u(t,\cdot;\mu)(\mathcal{K}) \ge 1-\epsilon,
        \qquad \text{for all $t\ge \tau>0$};
      \end{equation}

    \item there exists a nontrivial invariant measure for~\eqref{E:SHE}.

  \end{enumerate}
\end{theorem}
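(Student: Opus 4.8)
The plan is to prove the tightness statement~(1) and then obtain~(2) from the Krylov--Bogoliubov criterion exactly as in Step~1 of the proof of Theorem~\ref{T:MomCond}; once $\{\mathscr{L}u(t,\cdot;\mu)\}_{t\ge\tau}$ is tight on $L^2_\rho(\R^d)$, the time-averaged measures admit a weakly convergent subsequence whose limit is invariant for~\eqref{E:SHE}. The whole argument rests on two a priori estimates. First, Theorem~\ref{T:MomBdd} applied with the weight $\tilde\rho$ gives $\E(\Norm{u(t,\cdot;\mu)}_{\tilde\rho}^2)\le C\,\mathcal{G}_{\tilde\rho}(t;|\mu|)$, so that hypothesis~\eqref{E:InitData} yields the crucial uniform-in-time bound $\sup_{t\ge\tau}\E(\Norm{u(t,\cdot;\mu)}_{\tilde\rho}^2)<\infty$. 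Second, condition~\eqref{E:rhorho} (granted by hypothesis~\ref{it:ii-InvMea}) ensures, via Proposition~2.1 of~\cite{tessitore.zabczyk:98:invariant}, that the heat semigroup $p_t$ is \emph{compact} from $L^2_{\tilde\rho}(\R^d)$ into $L^2_\rho(\R^d)$ for every $t>0$. Compactness in the target space $L^2_\rho$ will come from this embedding, while uniform control of moments is done in the larger space $L^2_{\tilde\rho}$.

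Next I would localize the solution using Da Prato--Zabczyk's factorization trick over a fixed window. Restarting the mild formulation~\eqref{E:mild} at time $t-\tau$, write $u(t,\cdot)=p_\tau*u(t-\tau,\cdot)+I(t,\cdot)$, where $I(t,\cdot)$ is the stochastic convolution over $[t-\tau,t]$. Choosing $\alpha$ as in hypothesis~\ref{it:iv-InvMea}, the factorization identity recasts this convolution as
\[
I(t,\cdot)=\frac{\sin(\alpha\pi)}{\pi}\int_{t-\tau}^t (t-s)^{\alpha-1}\big(p_{t-s}*Y_s\big)(\cdot)\,\ud s,
\qquad
Y_s(\cdot)=\int_{t-\tau}^s\!\int_{\R^d}(s-r)^{-\alpha}p(s-r,\cdot-y)\,b(y,u(r,y))\,W(\ud r,\ud y).
\]
The point of this rewriting, exactly as around~\eqref{E:X1}, is that the operator $g\mapsto\int_{t-\tau}^t(t-s)^{\alpha-1}p_{t-s}*g(s,\cdot)\,\ud s$ from $L^p([t-\tau,t];L^2_{\tilde\rho})$ to $L^2_\rho$ inherits compactness from the compactness of $p_{t-s}$; likewise the deterministic term $p_\tau*u(t-\tau,\cdot)$ is a compact operator applied to an element of $L^2_{\tilde\rho}$. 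Hence, as soon as $u(t-\tau,\cdot)$ sits in a ball of $L^2_{\tilde\rho}$ and $Y$ sits in a ball of $L^p([t-\tau,t];L^2_{\tilde\rho})$, the random field $u(t,\cdot)$ is confined to a fixed compact subset $\mathscr{K}\subset L^2_\rho$ independent of $t$.

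The technical heart --- and the step I expect to be the main obstacle --- is the uniform-in-time second-moment estimate for $Y$ in $L^2_{\tilde\rho}$. Applying the Walsh isometry~\eqref{E:inner-v} and the linear bound $|b(y,u)|\le L_b|u|$ afforded by the degenerate condition~\eqref{E:Cond-b}, one bounds $\E\Norm{Y_s}_{\tilde\rho}^2$ by a space-time convolution of $\E[u(r,\cdot)^2]$ against the spatial covariance $f$, weighted by the singular factor $(s-r)^{-2\alpha}$. It is precisely here that condition~\eqref{E:Dalang-ab}, namely $\Upsilon_\alpha(\beta)<\infty$, is needed: on the Fourier side it renders the extra $(s-r)^{-2\alpha}$ singularity integrable against the spectral measure, the exponent $\alpha>4L_b^2\Upsilon(0)$ being compatible with the contraction constant $4L_b^2\Upsilon(0)<1$ of hypothesis~\ref{it:iii-InvMea}. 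Closing the resulting inequality with the Gronwall-type lemma of Appendix~\ref{S:Gronwall} then produces $\E\Norm{Y_s}_{\tilde\rho}^2\le C\,\mathcal{G}_{\tilde\rho}(s;|\mu|)$, which by~\eqref{E:InitData} is bounded uniformly for $s\ge\tau$.

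With both uniform bounds in hand, Markov's inequality applied to $\Norm{u(t-\tau,\cdot)}_{\tilde\rho}$ and to $\Norm{Y}_{L^p([t-\tau,t];L^2_{\tilde\rho})}$ shows that $u(t,\cdot)$ escapes the compact set $\mathscr{K}$ with probability at most $\epsilon$, uniformly in $t\ge\tau$; this is~\eqref{E:cpt} and hence claim~(1). Krylov--Bogoliubov then delivers an invariant measure, establishing~(2). To see it is \emph{nontrivial}, note that since $b(0)=0$ the stochastic integral in~\eqref{E:mild} has zero mean, so $\E[u(t,x)]=J_0(t,x)$ and $\E\Norm{u(t,\cdot)}_\rho^2\ge\mathcal{G}_\rho(t;\mu)$; for the spread-out initial data covered by the hypotheses, for which $\mathcal{G}_\rho(t;\mu)$ does not dissipate as $t\to\infty$, the averaged laws retain a nonzero second moment and so cannot collapse to the Dirac mass at the zero state.
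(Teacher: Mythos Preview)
Your outline is essentially the approach the paper points to: the statement is quoted from \cite{chen.eisenberg:24:invariant} without proof here, but the surrounding discussion (Theorem~\ref{T:MomCond}, the factorization identity~\eqref{E:Factor}, the compactness of $p_t$ from $L^2_{\tilde\rho}$ to $L^2_\rho$ via Proposition~2.1 of \cite{tessitore.zabczyk:98:invariant}, and Remark~\ref{R:InvMeasure}) makes clear that the intended strategy is exactly the Krylov--Bogoliubov/factorization scheme you describe. Two small points deserve attention.

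First, the appeal to Lemma~\ref{L:Gronwall} for the $Y$ estimate is misplaced: that lemma is tailored to show $f(t)\to 0$ and is used only in the proof of Theorem~\ref{T:L2Lim}. The bound $\E\Norm{Y_s}_{\tilde\rho}^2\le C\,\mathcal{G}_{\tilde\rho}(s;|\mu|)$ should instead come directly from the pointwise moment bound~\eqref{E:BddMnt} together with the integrability~\eqref{E:Dalang-ab}, which absorbs the extra $(s-r)^{-2\alpha}$ singularity on the Fourier side; no iteration is needed.

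Second, your nontriviality argument has a gap. Hypothesis~\ref{it:ii-InvMea} only imposes $\limsup_{t\to\infty}\mathcal{G}_{\tilde\rho}(t;|\mu|)<\infty$, not $\liminf_{t\to\infty}\mathcal{G}_\rho(t;\mu)>0$; for instance if $\mu=\delta_0$ then $J_0(t,\cdot)\to 0$ in $L^2_\rho$ and your lower bound collapses. The paper (Remark~\ref{R:InvMeasure}) instead refers to Theorem~4.1 of \cite{tessitore.zabczyk:98:invariant}, where nontriviality is obtained by a separate argument---in effect, by running the construction from a flat initial condition $\mu\equiv c\ne 0$, for which $J_0(t,x)\equiv c$ and $\mathcal{G}_\rho(t;\mu)=c^2\Norm{\rho}_{L^1}$ is bounded away from zero. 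Conclusion~(2) is a statement about the equation, not about the particular $\mu$ in the hypotheses.
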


\begin{remark}
   Conditions~\emph{\ref{it:ii-InvMea}} and~\emph{\ref{it:iii-InvMea}} in
   Theorem~\ref{T:InvMea} can be more compactly written
   as~\eqref{E:SpectralCond}.  Note that condition~\eqref{E:SpectralCond}
   implicitly implies that $4\Upsilon(0) L_b^2 < 1$, in order to find an
   $\alpha$ such that~\eqref{E:Dalang-ab} is satisfied. Once again, let us
   recall that it has to be interpreted as a small intensity condition on the
   noisy forcing term.
\end{remark}

In the literature, condition~\eqref{E:Dalang-ab} is often called the
\textit{strengthened Dalang's condition}. Since this condition first appeared in
the paper by Sanz-Sol\'e and Sarr\`a~\cite{sanz-sole.sarra:02:holder}, we may
also call it the \textit{Sanz-Sol\'e-Sarr\`a condition}. The conditions on the
structure and intensity of the noise, as specified in
Conditions~\emph{\ref{it:iii-InvMea}} and~\emph{\ref{it:iv-InvMea}} of the above
theorem, can be consolidated into a single
condition---condition~\eqref{E:SpectralCond}.

\begin{remark}\label{R:InvMeasure}
  Note that under the degenerate condition~\eqref{E:Cond-b}, i.e., $b(0) \equiv
  0$, the zero function is an invariant measure for~\eqref{E:SHE}. This
  invariant measure is called the \textit{trivial invariant measure}. Once the
  existence of an invariant measure is obtained via the Bogoliubov theorem (see
  the proof of Theorem~\ref{T:MomCond}), it is routine to show that there exists
  a nontrivial one; see Theorem~4.1 of~\cite{tessitore.zabczyk:98:invariant}.
\end{remark}

\begin{remark}\label{R:Factor64}
  The expression $4L_b^2\Upsilon(0)$ in both Theorems~\ref{T:MomBdd}
  and~\ref{T:InvMea} takes the form of $2^7 L_b^2\Upsilon(0)$
  in~\cite{chen.eisenberg:24:invariant} (see (1.10b) and (1.11) \textit{ibid.}).
  This discrepancy arises from applying the general $p$-th moment bounds (for $p
  \ge 2$), as given in (1.14) of~\cite{chen.huang:19:comparison}, to the second
  moment. These $p$-th moment bounds were derived using the
  Burkholder-Gundy-Davis inequality for cases where $b(x,0)$ may not vanish.
  However, when focusing specifically on the second moment, its application is
  unnecessary. Moreover, the degenerate condition~\eqref{E:Cond-b} further
  simplifies the moment bounds.
\end{remark}

Interested readers may refer to Section~5 of~\cite{chen.eisenberg:24:invariant},
where Theorem~\ref{T:InvMea} is discussed in detail from several perspectives
(initial data, weight functions, correlation functions, and a comparison with
the conditions outlined by Tessitore and
Zabczyk~\cite{tessitore.zabczyk:98:invariant}).

\subsection{The asymptotic behavior via Gu-Li's approach}\label{SS:Gu-Li}

This section is devoted to establishing a new result (to the best of our
knowledge). It establishes convergence in law to the invariant distribution
under the ``\textit{weak disorder}'' condition given
by~\emph{\ref{it:iii-InvMea}} in Theorem~\ref{T:InvMea}. Specifically, suppose
the correlation function $f$ is given by~\eqref{E:Corr-GuLi} and suppose the
diffusion coefficient $b$ depends only on the second argument. Gu and Li proved
in~\cite{gu.li:20:fluctuations} that, when the spatial dimension $d \ge 3$, the
solution $u(t, \cdot)$ to the SHE~\eqref{E:SHE} at time $t$ with a flat initial
condition converges in law to a stationary random field in space as $t \to
\infty$. In this part, we will show that one can extend this result to a broader
class of Gaussian noises (namely, those satisfying
condition~\eqref{E:SpectralCond}), adapting the methods
in~\cite{gu.li:20:fluctuations}. A similar question is also considered
in~\cite{gerolla.hairer.ea:23:fluctuations}, where the spatial correlation of
the driving noise is uniformly bounded with a Riesz type tail. With respect to
the aforementioned references~\cite{gerolla.hairer.ea:23:fluctuations,
gu.li:20:fluctuations} we will also consider a broader class of initial
conditions.

\begin{theorem}\label{T:L2Lim}
  Suppose that conditions~\ref{it:i-InvMea} and~\ref{it:iii-InvMea} of
  Theorem~\ref{T:InvMea} hold. Let the following condition~\ref{it:ii'-InvMea}
  replace condition~\ref{it:ii-InvMea} of Theorem~\ref{T:InvMea}:

  \begin{enumerate}[wide, labelwidth=!, labelindent=0pt, label=\emph{(\roman*')}]
    \setcounter{enumi}{1}
    \setlength\itemsep{.1in}

    \item\label{it:ii'-InvMea} For $t \ge 0$, $x \in \R$ and a Borel measure
      $\mu$ on $\R^d$, let $J_0(t,x;\mu)$ be the solution to the homegenous
      equation (see~\eqref{E:def-J0}). We assume that the following limit
      exists:
      \begin{align}\label{E:InitData-Limit}
        \lim_{t\to\infty} \sup_{x\in\R^d} \left| J_0(t,x;\mu)\right| <\infty.
      \end{align}

  \end{enumerate}

  \noindent We also assume that the nonnegative weight function $\rho$ is an
  element in $L^1(\R^d)$, which does not necessarily need to be admissible (see
  Definition~\ref{D:Rho}). Recall that $u(t,\cdot)$ is the solution
  to~\eqref{E:SHE}. Then the following existence and uniqueness statements hold:

  \begin{enumerate}[wide, labelwidth=!, labelindent=0pt, label=\emph{(\arabic*)}]
    \setlength\itemsep{.1in}

    \item\label{it:L2Lim-1} There exists a random field $Z=\{Z(x);\;x\in\R^d\}$
      such that $Z(\cdot)\in L_\rho^2(\R^d)$ a.s. and
      \begin{equation}\label{E:L2Lim}
        u(t,\cdot) \stackrel{\text{(d)}}{\longrightarrow} Z,
          \quad \text{as $t\to\infty$,}
          \quad \text{in $L_\rho^2(\R^d)$.}
      \end{equation}

    \item\label{it:L2Lim-2} Suppose $u_1$ and $u_2$ are two solutions
      to~\eqref{E:SHE} starting from $\mu_1$ and $\mu_2$, respectively. Assume
      that both $\mu_i$ satisfy~\eqref{E:InitData-Limit}. Let $Z_1$ and $Z_2$ be
      the respective limiting random fields given in part~\ref{it:L2Lim-1}. Then
      we have
      \begin{align}\label{E:L2Unique}
        \lim_{t\to\infty} \sup_{x\in\R^d} \left|J_0(t,x;\mu_1-\mu_2)\right| = 0
        \quad \Longrightarrow \quad Z_1 \stackrel{(d)}{=} Z_2\:,
      \end{align}
      where the notation $Z_1 \stackrel{(d)}{=} Z_2$ denotes that the two random
      fields follow the same law.

  \end{enumerate}
\end{theorem}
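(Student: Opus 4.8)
The plan is to construct the limiting field $Z$ through Gu and Li's device of sending the initial time to $-\infty$, exploiting the invariance of the noise under time shifts. For $t>0$ let $v_t$ denote the mild solution of~\eqref{E:SHE} issued at time $-t$ from $\mu$, so that for $r\ge -t$,
\begin{align*}
  v_t(r,x) = J_0(r+t,x;\mu) + \int_{-t}^r\int_{\R^d} p(r-s,x-y)\,b(y,v_t(s,y))\,W(\ud s,\ud y).
\end{align*}
Since space-time white noise is stationary in time, one has the identity in law $v_t(0,\cdot)\stackrel{(d)}{=}u(t,\cdot)$, recalling that $J_0$ is given by~\eqref{E:def-J0}. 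It therefore suffices to prove that the family $\{v_t(0,\cdot)\}_{t>0}$, all built from a single noise $W$ on $(-\infty,0]\times\R^d$, converges in $L^2(\Omega;L^2_\rho(\R^d))$ to some $Z$; convergence in probability in $L^2_\rho$ then yields~\eqref{E:L2Lim} and establishes part~\ref{it:L2Lim-1}.

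The core is a Cauchy estimate. For $t>s>0$, subtracting the two mild formulations at $r=0$ under the common noise gives $v_t(0,x)-v_s(0,x)=A_1(x)+A_2(x)+A_3(x)$, where
\begin{align*}
  A_1(x) &= J_0(t,x;\mu)-J_0(s,x;\mu), \\
  A_2(x) &= \int_{-t}^{-s}\int_{\R^d} p(-r,x-y)\,b(y,v_t(r,y))\,W(\ud r,\ud y), \\
  A_3(x) &= \int_{-s}^0\int_{\R^d} p(-r,x-y)\big[b(y,v_t(r,y))-b(y,v_s(r,y))\big]\,W(\ud r,\ud y).
\end{align*}
The remote-past term $A_2$ is the heart of the matter. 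Using the Itô isometry~\eqref{E:inner-v}, the bound $|b(y,v)|\le L_b|v|$ coming from~\eqref{E:Cond-b}, and the second-moment bound of Theorem~\ref{T:BddMnt} (which under~\ref{it:ii'-InvMea} keeps $\sup_y\E[v_t(r,y)^2]$ under control), one reduces $\E[A_2(x)^2]$ to a multiple of $\int_s^{t}\int_{\R^d}e^{-\tau|\xi|^2}\widehat f(\ud\xi)\,\ud\tau = \int_{\R^d}|\xi|^{-2}\big(e^{-s|\xi|^2}-e^{-t|\xi|^2}\big)\widehat f(\ud\xi)$, uniformly in $x$. This is dominated by $|\xi|^{-2}\widehat f(\ud\xi)$, which is integrable precisely because $\Upsilon(0)<\infty$ as in~\eqref{E:Upsilon_0}, and tends to $0$ as $s\to\infty$ by dominated convergence; this is exactly where $d\ge 3$ is indispensable. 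The term $A_3$ is a genuine contraction: the Lipschitz bound and~\eqref{E:inner-v} bound $\E[A_3(x)^2]$ by $L_b^2$ times a space-time convolution of $\E[(v_t-v_s)(r,\cdot)^2]$, so the Gronwall-type lemma of Appendix~\ref{S:Gronwall}, together with $4L_b^2\Upsilon(0)<1$, lets this term be absorbed. Finally $A_1$ is deterministic, and $\|A_1\|_\rho\to0$ follows from the stabilization of the homogeneous profile ensured by condition~\ref{it:ii'-InvMea}, combined with $\rho\in L^1(\R^d)$ and dominated convergence. Collecting the three bounds shows $\E\Norm{v_t(0,\cdot)-v_s(0,\cdot)}_\rho^2\to0$, which produces $Z$.

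For the uniqueness statement~\ref{it:L2Lim-2}, I would couple the two solutions $u_1,u_2$ by driving them with the same noise on $[0,\infty)$. Their difference $w=u_1-u_2$ solves a mild equation whose source is $J_0(t,\cdot;\mu_1-\mu_2)$ and whose stochastic term obeys the same Lipschitz/contraction bound as $A_3$. Applying~\eqref{E:inner-v} and the Gronwall lemma of Appendix~\ref{S:Gronwall} under $4L_b^2\Upsilon(0)<1$, the hypothesis $\sup_x|J_0(t,x;\mu_1-\mu_2)|\to0$ forces $\E\Norm{w(t,\cdot)}_\rho^2\to0$, i.e. $u_1(t,\cdot)-u_2(t,\cdot)\to0$ in probability in $L^2_\rho$. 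Since part~\ref{it:L2Lim-1} gives $u_i(t,\cdot)\to Z_i$ in law, a Slutsky-type argument (uniqueness of limits in distribution) yields $Z_1\stackrel{(d)}{=}Z_2$.

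The main obstacle is the remote-past term $A_2$: one must show that the noise accumulated before time $-s$, propagated forward over a span of length at least $s$, leaves a vanishing trace at time $0$. This rests on the $t^{-d/2}$ decay of the heat kernel expressed through the integrability of $|\xi|^{-2}\widehat f$, and is the precise point where the dimensional restriction $d\ge 3$ (equivalently $\Upsilon(0)<\infty$) cannot be dropped. A secondary difficulty is running the Gronwall argument of Appendix~\ref{S:Gronwall} uniformly in $x$, so that the contraction term $A_3$ is absorbed while the decay of $A_1$ and $A_2$ is preserved.
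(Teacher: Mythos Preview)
Your approach is essentially the same as the paper's: the backward-in-time device of Gu--Li, the decomposition of $v_t(0,\cdot)-v_s(0,\cdot)$ into a homogeneous term, a remote-past stochastic integral, and a contraction term, followed by the Gronwall lemma of Appendix~\ref{S:Gronwall}. Your treatment of part~\ref{it:L2Lim-2} via a Slutsky argument is slightly more direct than the paper's (which re-runs the backward construction for the difference $u_1-u_2$, viewing it as a solution to~\eqref{E:SHE} with modified coefficient $\widetilde b(u)=b(u+u_2)-b(u_2)$), but both are correct.

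There is one technical gap in your handling of $A_2$. You invoke Theorem~\ref{T:BddMnt} to assert that $\sup_y\E[v_t(r,y)^2]$ is uniformly bounded, and then reduce the estimate to $\int_s^t\int_{\R^d}e^{-\tau|\xi|^2}\widehat f(\ud\xi)\,\ud\tau$. But Theorem~\ref{T:BddMnt} only yields $\E[v_t(r,y)^2]\le C\,J_0^2(r+t,y;|\mu|)$, and condition~\ref{it:ii'-InvMea} controls $\sup_y J_0(\tau,y;|\mu|)$ only for $\tau$ bounded away from $0$; it says nothing about $\tau\downarrow 0$. For genuinely rough initial data---for instance $\mu=\delta_0$, where $J_0(\tau,0;|\mu|)=(2\pi\tau)^{-d/2}$---the bound blows up as $r\downarrow -t$, and your reduction fails on the portion of the $r$-integral near $-t$. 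The paper patches this in Step~1 of its proof by first running the equation for one unit of time: it starts an auxiliary process $u_K^*$ at time $-K-1$ from $\mu$, then defines $u_K$ by restarting at time $-K$ from the smoothed data $u_K^*(-K,\cdot)$. This one-unit buffer guarantees that every subsequent moment bound is evaluated at age $\ge 1$, where $J_0(\tau,\cdot;|\mu|)\le\widehat C_\mu$ uniformly (see~\eqref{E:Cmu}). Once you insert this restart, your argument goes through and coincides with the paper's.
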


Before the proof, we first make some remarks.

\begin{remark}[Perturbation condition]\label{R:Pertubation}
  Recall that in Gu and Li~\cite{gu.li:20:fluctuations}, the initial condition
  needs to be of the form $u(0,x) = \lambda + g(x)$ with $g\in L^1(\R^d)\cap
  L^\infty(\R^d)$; see Remark 3.6 (\textit{ibid.}). The above
  Theorem~\ref{T:L2Lim} relaxes this perturbation term $g$. Indeed, the
  perturbation term $g$ can go well beyond functions in $L^1(\R^d)\cap
  L^\infty(\R^d)$.  To state the most general perturbation type assumption on
  the initial condition, suppose that the rough initial measure $\mu$ can be
  decomposed into two parts $\mu = \mu_0+\mu_1$ such that $\mu_0$ is a rough
  initial measure that satisfies~\eqref{E:InitData-Limit}, and $\mu_1$, another
  rough initial measure, is treated as the perturbation. Here is the
  \textit{perturbation condition}:
  \begin{align}\label{E:Pertubation}
    \lim_{t\to\infty} \sup_{x\in\R^d} \left|J_0(t,x;\mu_1)\right| = 0.
  \end{align}
  Thanks to part~\emph{\ref{it:L2Lim-2}} of Theorem~\ref{T:L2Lim}, provided the
  above perturbation condition~\eqref{E:Pertubation} holds, or equivalently, if
  \begin{align*}
    \lim_{t\to\infty} \sup_{x\in\R^d} \left|J_0(t,x;\mu)  \right|=
    \lim_{t\to\infty} \sup_{x\in\R^d} \left|J_0(t,x;\mu_0)\right|,
  \end{align*}
  then the limiting random field $Z$ follows the same law no matter whether the
  system starts from $\mu$ or $\mu_0$. The perturbation
  condition~\eqref{E:Pertubation} gives a partition of all possible rough
  initial conditions into equivalent classes with respect to the law of the
  asymptotic random field $Z$.
\end{remark}

\begin{example}\label{Ex:Pertubation}
  Following the same setup as Remark~\ref{R:Pertubation}, we provide some
  examples for which the perturbation condition~\eqref{E:Pertubation} is
  satisfied:

  \begin{enumerate}[wide, labelwidth=!, labelindent=0pt, label=\textbf{(\alph*)}]
    \setlength\itemsep{.1in}

    \item $\mu_1$ is such that $|\mu_1|$ is a finite Borel measure. One can
      consider for instance a Dirac delta measure $\mu_1 = \delta_0(x)$, or a
      linear combination of the form $\mu_1 = \delta_{-1}(x) - \delta_1(x)$.
      This is clear because $J_0(t,x;|\mu_1|) \le \left(2\pi
      t\right)^{-d/2}|\mu_1|(\R^d)$, and this last quantity converges to $0$ as
      $t\to\infty$.

    \item $|\mu_1|$ does not need to be a finite measure. Consider for example
      $\mu_1 (\ud x) = |x|^{-\alpha}\ud x$, for a given $\alpha\in (0,d)$. In
      this case, $\mu_1$ is a nonnegative and locally finite measure with total
      variation being infinity. However, the perturbation
      condition~\eqref{E:Pertubation} is still satisfied. This fact is proved in
      Example~5.7 of~\cite{chen.eisenberg:24:invariant},
      \begin{align*}
        \sup_{x\in\R^d} J_0(t,x;|\cdot|^{-\alpha})
        \le C_\alpha t^{-\alpha/2} \to 0, \quad
        \text{as $t\to\infty$, with $C_\alpha\coloneqq 2^{-\alpha/2}\Gamma((d-\alpha)/2)\Gamma(d/2)$.}
      \end{align*}

    \item Let $\mu_1 = \sum_{k\in \mathbb{Z}^d} \delta_{2\pi k}(x) -
      (2\pi)^{-d}$. We claim that $\mu_1$ satisfies the perturbation
      condition~\eqref{E:Pertubation}. Indeed, in this case, for all $x\in\R^d$
      and $t> 0$,
      \begin{align*}
        J_0(t,x;\mu_1) = G(t,x) - (2\pi)^{-d}, \quad \text{with $G(t,x)\coloneqq \sum_{k\in \mathbb{Z}^d} p(t,x+2\pi k)$.}
      \end{align*}
      Since $J_0(t,x;\mu_1)$ is $2\pi$ periodic in each direction of $x$, we see
      that for $t\ge 1$,
      \begin{align*}
        \sup_{x\in\R^d}\left|J_0(t,x;\mu_1)\right|
        = \sup_{x\in[-\pi,\pi]^d} \left|G(t,x) - (2\pi)^{-d}\right|
        \le C e^{-t /2} \to 0, \quad \text{as $t \to \infty$,}
      \end{align*}
      where the inequality follows from, e.g., part (4) of Lemma 2.1
      in~\cite{chen.ouyang.ea:23:parabolic}. This proves the claim. As a
      consequence, the solution $u(t,x)$ to~\eqref{E:SHE} starting from the flat
      initial condition $(2\pi)^{-d}$ and from $\sum_{k\in \mathbb{Z}^d}
      \delta_{2\pi k}(x)$ both yield the limiting field $Z$ of the same
      distribution.

  \end{enumerate}
\end{example}

\begin{remark}[Comparison with Theorem~\ref{T:InvMea}]
  Assumption~\emph{\ref{it:ii'-InvMea}} of Theorem~\ref{T:L2Lim} is stronger
  than the assumption~\emph{\ref{it:ii-InvMea}} of Theorem~\ref{T:InvMea}.
  However, Theorem~\ref{T:L2Lim} does not require the Sanz-Sol\'e-Sarr\`a
  condition~\eqref{E:Dalang-ab}, nor does it impose additional requirements on
  the weight function $\rho$ beyond being nonnegative and integrable.
  In~\cite{chen.huang.ea:19:dense}, the authors constructed a correlation
  function $f(x)$ in $\R^3$ that does not satisfy the Sanz-Sol\'e-Sarr\`a
  condition~\eqref{E:Dalang-ab} by applying a logarithmic correction to the
  critical case. They established solutions with unbounded oscillations.
  Nevertheless, the correlation constructed there fulfills the condition
  $\Upsilon(0)<\infty$; see Theorem~2.1 and its proof \textit{ibid}.
\end{remark}

\begin{remark}
  Hypothesis~\emph{\ref{it:iii-InvMea}} in Theorem~\ref{T:InvMea} has been named
  \emph{weak disorder condition} at the beginning of Section~\ref{SS:Gu-Li}.
  This type of condition is related to the random polymer literature and leads
  to central limit type theorems for the polymer measure; see, e.g.,
  \cite{comets:17:directed}. We plan on pursuing this type of result for general
  environments $W$ in a subsequent publication.
\end{remark}

Before proving Theorem \ref{T:L2Lim}, let us introduce some additional notation.

\begin{notation}
  The function $J_0(t,x;|\mu|)$ defined by~\eqref{E:def-J0} is smooth on
  $(0,\infty) \times \R^d$. Hence, condition~\eqref{E:InitData-Limit} implies
  that the following two constants are finite:
  \begin{align}\label{E:Cmu}
    C_\mu  \coloneqq \sup_{x\in \R^d} J_0\left(1,x;|\mu|\right) < \infty \, ,
    \quad \text{and} \quad
    \widehat{C}_\mu \coloneqq \sup_{(t,x)\in [1,\infty) \times \R^d} J_0\left(t,x;|\mu|\right) < \infty.
  \end{align}
  It is clear that $C_\mu \le \widehat{C}_\mu$. For all $t\ge 0$, we also define
  \begin{align}\label{E:k}
    k(t) \coloneqq \iint_{\R^{2d}} \ud y\ud y'\:  p(t,y)p(t,y')f(y-y')
    =  (2\pi)^{-d} \int_{\R^d}\widehat{f}(\ud \xi)\: e^{-t|\xi|^2},
  \end{align}
  where the second equality is an application of the Plancherel theorem.
  Eventually, we set
  \begin{align}\label{E:H}
    H(t) \coloneqq \int_t^\infty k(s) \ud s.
  \end{align}
\end{notation}

We also label a preliminary result for further use. Its elementary proof is left
to the reader for sake of conciseness.

\begin{lemma}\label{L:H}
  Assume that the assumption~\eqref{E:Upsilon_0} holds, i.e.,
  $\Upsilon(0)<\infty$. Then the function $H(t)$ defined in~\eqref{E:H} is
  monotone decreasing, such that
  \begin{align}\label{E:Limits-H}
    H(t) \downarrow 0 \quad \text{as $t\to\infty$}, \quad \text{and} \quad
    H(t) \uparrow (2\pi)^{-d} \int_{\R^{d}} \frac{\widehat{f}( \ud \xi )}{|\xi|^2} = \Upsilon(0), \quad \text{as $t\downarrow 0$.}
  \end{align}
\end{lemma}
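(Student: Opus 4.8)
The plan is to reduce everything to a single explicit representation of $H$ obtained by swapping the two integrations, and then to read off both the monotonicity and the two limits from the monotone convergence theorem. First I would note that the integrand defining $k$ in~\eqref{E:k} is nonnegative: since $\widehat f$ is a nonnegative measure and $e^{-s|\xi|^2}\ge 0$, we have $k(s)\ge 0$ for every $s\ge 0$. Consequently $H(t)=\int_t^\infty k(s)\,\ud s$ is nonincreasing in $t$, which settles the monotonicity claim before any limit is computed.

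The key step is to apply Tonelli's theorem to the nonnegative integrand $(s,\xi)\mapsto e^{-s|\xi|^2}$ and interchange the $s$- and $\xi$-integrals in
$$H(t)=(2\pi)^{-d}\int_t^\infty\int_{\R^d}\widehat f(\ud\xi)\,e^{-s|\xi|^2}\,\ud s.$$
Carrying out the inner $s$-integral via $\int_t^\infty e^{-s|\xi|^2}\,\ud s=|\xi|^{-2}e^{-t|\xi|^2}$ for $\xi\neq 0$ yields the representation
$$H(t)=(2\pi)^{-d}\int_{\R^d}\frac{e^{-t|\xi|^2}}{|\xi|^2}\,\widehat f(\ud\xi).$$
Here one must check that the point $\xi=0$ causes no trouble: the hypothesis $\Upsilon(0)<\infty$ says that $|\xi|^{-2}$ is $\widehat f$-integrable, which in particular forces $\widehat f(\{0\})=0$, so the origin contributes nothing to any of these integrals.

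With this formula in hand both limits are immediate. As $t\downarrow 0$ the integrand $|\xi|^{-2}e^{-t|\xi|^2}$ increases pointwise (for $\xi\neq0$) up to $|\xi|^{-2}$, so monotone convergence gives $H(t)\uparrow(2\pi)^{-d}\int_{\R^d}|\xi|^{-2}\,\widehat f(\ud\xi)=\Upsilon(0)$, the last equality being the definition~\eqref{E:Upsilon_0}. As $t\to\infty$ the same integrand decreases pointwise to $0$ while remaining dominated by the $\widehat f$-integrable function $|\xi|^{-2}$, so dominated (equivalently monotone) convergence gives $H(t)\downarrow 0$.

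The only point requiring care—rather than a genuine obstacle—is the justification of the Tonelli interchange together with the disposal of the singularity at $\xi=0$. Once $\Upsilon(0)<\infty$ is invoked to guarantee both the integrability of $|\xi|^{-2}$ against $\widehat f$ and the absence of an atom at the origin, the remainder of the argument is entirely mechanical, which is why the elementary proof can be left to the reader.
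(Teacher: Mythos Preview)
Your proof is correct and is precisely the elementary argument the paper has in mind; the paper itself omits the proof entirely, stating only that it is ``left to the reader for sake of conciseness.'' The Tonelli interchange leading to $H(t)=(2\pi)^{-d}\int_{\R^d}|\xi|^{-2}e^{-t|\xi|^2}\,\widehat f(\ud\xi)$, together with monotone convergence in each limit, is exactly the computation one expects here, and your handling of the potential atom at $\xi=0$ via $\Upsilon(0)<\infty$ is the right way to dispose of that technicality.
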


We are now ready to prove the main result of this section.

\begin{proof}[Proof of item \ref{it:L2Lim-1} in Theorem~\ref{T:L2Lim}]
  Recall that we assume the coefficient $b$ to depend on the $u$ variable only.
  Thanks to the well-posedness results, as established
  in~\cite{chen.kim:19:nonlinear}, one can find a unique solution
  to~\eqref{E:SHE} starting from the rough initial condition $\mu$. Also note
  that in the proof, we use $\Norm{\cdot}_p$ to denote $L^p(\Omega)$ norms. We
  now divide our proof in several steps. \smallskip

  \noindent \textit{Step 1:~Dynamical system in negative time.} Since we
  consider the large time behavior, one can restart the system at time $1$.
  Indeed, the Cauchy-Schwarz inequality and the moment bound~\eqref{E:BddMnt}
  imply that, for all $t>0$ and $x\in\R^d$,
  \begin{align}\label{E:InitL2Bdd}
    \E & \left(\left[u(1,\cdot)*p(t,\cdot)(x)\right]^2\right) \le \int_{\R^d} \Norm{u(1,y)}_2^2 p(t,x-y)\ud y \notag \\
       & \le \frac{1}{L_b^2\left(1-4L_b^2\Upsilon(0)\right)}\int_{\R^d} J_0^2(1,y) p(t,x-y)\ud y
     \le \frac{C_\mu^2}{L_b^2\left(1-4L_b^2\Upsilon(0)\right)} <\infty,
  \end{align}
  where $C_\mu$ is defined by~\eqref{E:Cmu}. Hence,
  \begin{align*}
    \left[ u(1,\cdot)*p(t,\cdot)\right]\!(x) < \infty \quad
    \text{a.s.~for all $t>0$ and $x\in\R^d$} .
  \end{align*}
  This relation implies that $u(1,\cdot)$, considered as an initial condition,
  satisfies assumption~\eqref{A:icon} a.s. In the following, we will thus
  restart our system after one unit of time. \medskip

  For any parameter $K>0$, we now extend the white noise $\dot{W}$ in time to a
  two-sided noise indexed by $t\in \R$. In addition, define a process $u_K^*$ by
  \begin{align*}
    u_K^* = \left\{u_K^*(t,x):\: (t,x) \in [-K-1, \infty) \times \R^d\right\},
  \end{align*}
  as the unique solution to the following equation
  \begin{align}\label{E:SHE-K*}
  \begin{dcases}
    \left(\dfrac{\partial}{\partial t}-\dfrac{1}{2}\Delta\right) u_K^* (t,x) = b\left(u_K^* (t, x)\right) \dot W(t,x), \quad (t,x) \in [-K-1, \infty) \times \R^d, \\
    u_K^* (-K-1, \cdot) = \mu.
  \end{dcases}
  \end{align}
  We will restart the system at time $-K$, namely,
  \begin{align}\label{E:u_K-K}
    u_K^*(-K,x) = J_0(1,x; \mu) + \int_{-K-1}^{-K}\int_{\R^d} p\left(-K-s,x-y\right) b\left(u_K^*(s,y)\right)W(\ud s,\ud y).
  \end{align}
  According to this procedure, the process $u_K$ is now defined as
  \begin{align*}
    u_K = \left\{u_K(t,x):\: (t,x) \in [-K, \infty) \times \R^d\right\} \, ,
  \end{align*}
  given as the unique solution to the following restarted equation:
  \begin{align}\label{E:SHE-K}
  \begin{dcases}
    \left(\dfrac{\partial}{\partial t}-\dfrac{1}{2}\Delta\right) u_K (t,x) = b\left(u_K (t, x)\right) \dot W_1(t,x), & (t,x) \in [-K, \infty) \times \R^d, \\
    u_K (-K, x) = u_K^*(-K,x),                                                                                       & x\in\R^d, \quad \text{a.s.},
  \end{dcases}
  \end{align}
  where $\dot{W}_1(t,x) = \dot{W}(1+t,x)$ is a time-shifted noise. Note that
  equivalently, $u_K$ is defined through the following mild formulation:
  \begin{align}\label{E:Mild-uK}
    u_K(t,x) = J_0\left(t+K,x; u_K^*(-K,\cdot)\right) + \int_{-K}^t \int_{\R^d} p(t-s,x-y) b(u_K(s,y)) W_1(\ud s, \ud y).
  \end{align}
  Due to the time stationarity of noise $\dot W$, we see that the random field
  \begin{align*}
    \widetilde{u} = \left\{\widetilde{u} (t,x) = u_K \left(t-K-1, x\right); (t,x) \in [0, \infty) \times \R^d\right\}
  \end{align*}
  shares the same distribution as $u$---the solution to~\eqref{E:SHE}. In the
  following, we are thus reduced to prove the following claim in order to
  establish the theorem: \smallskip

  \noindent\textit{Claim:~} $\left\{u_K (0, x); K \ge 0, \;x\in\R^d\right\}$ is
  a Cauchy sequence in $L^\infty\left(\R^d; L^2\left(\Omega\right)\right) \cap
  L^2\left(\Omega; L_\rho^2\left(\R^d\right)\right)$. \smallskip

 \noindent The remainder of the proof is now devoted to prove the above claim.
 \smallskip

  \noindent\textit{Step 2:~Reduction to an $L^2(\Omega)$-estimate.} Let $K > 0$.
  Denote
  \begin{align}\label{E:JK(t,x)}
    J (t + K,x) \coloneqq \sup_{L \ge K}\E \left[ \left(u_{L} (t,x) - u_{K} (t,x)\right)^2\right],
    \quad \text{for all $t \ge -K$ and $x\in\R^d$.}
  \end{align}
  We also define
  \begin{align}\label{E:CalJK(t,x)}
    \mathcal{J}(t + K) \coloneqq \sup_{x\in\R^d} J (t + K,x).
  \end{align}
  Then, the convergence in $L^\infty\left(\R^d;L^2\left(\Omega\right)\right)$,
  as stated in our \emph{Claim} above, holds if and only if, for any finite
  $t\in\R$,
  \begin{align}\label{E:JK->0}
    \lim_{K \to \infty} \mathcal{J} (K + t) = 0.
  \end{align}
  Moreover, relation \eqref{E:JK->0} implies the convergence in
  $L^2_\rho\left(\R^d;L^2\left(\Omega\right)\right)$, as stated in the
  conclusion~\eqref{E:L2Lim} of Theorem~\ref{T:L2Lim}. This implication is
  observed by noticing that
  \begin{align*}
    \int_{\R^d} \E \left[ \left(u_L (0,x) - u_K (0,x)\right)^2\right]\rho(x)\ud x
    \le \mathcal{J} (K) \Norm{\rho}_{L^1(\R^d)} \to 0,\quad \text{as $L>K\to\infty$.}
  \end{align*}
  Hence, the Claim is proved once the limit in~\eqref{E:JK->0} is established.
  \smallskip

  \noindent\textit{Step 3: Some additional notation.} In the following, we will
  focus on proving~\eqref{E:JK->0}. Let us first set up some notation. Since
  $|b(u)|\le L_b|u|$ thanks to~\eqref{E:Cond-b}, we see that the quantity
  $J(t+K,x)$ defined by~\eqref{E:JK(t,x)} can be decomposed as
  \begin{align}\label{E:J-n}
    J (t + K,x) \le 4 I_0^K(t,x) + 4 I_1^K (t,x) + 2 I_2^K (t,x),
  \end{align}
  where
  \begin{equation}\label{E:I_0-d}
  I_0^K(t,x) \coloneqq \sup_{L \ge K}\E\left[\left|J_0\left(t+L,x;u_{L}^*(1,\cdot)\right) - J_0\left(t+K,x;u_{K}^*(1,\cdot)\right)\right|^2\right]  \, ,
  \end{equation}
  \begin{align}\label{E:I_1-e}
    \begin{aligned}
     I_1^K (t,x) \coloneqq L_b^2\: \sup_{L \ge K}\E\Bigg[ \int_{- L}^{-K}\ud s \iint_{\R^{2d}} \ud y \ud y'\: p(t - s, x-y) p(t - s, x-y') &  \\
     \times  |u_{L}(s, y) u_{K}(s, y')| f\left(y - y'\right)                                                                               & \:\Bigg],
    \end{aligned}
  \end{align}
  and
  \begin{align}\label{E:I_2-f}
    \begin{aligned}
     I_2^K (t,x) \coloneqq L_b^2\: \sup_{L \ge K}\E \Bigg[ \int_{- K}^t \ud s \iint_{\R^{2d}} \ud y \ud y'\: p(t - s, x-y ) \left|u_{L}(s,y ) - u_{K}(s,y)\right| &  \\
     \times f(y - y')  p(t - s, x-y') \left|u_{L}(s,y') - u_{K}(s,y')\right|                                                                                      & \:\Bigg].
    \end{aligned}
  \end{align}
   We now bound the terms $I_0^K$, $I_1^K$, and $I_2^K$ separately.
   \smallskip

  \noindent{\textit{Step 4:~Term $I_0^K(t,x)$.}} Let us first bound the term
  $I_0^K$ defined by~\eqref{E:I_0-d}. To this aim, we first recall
  from~\eqref{E:def-J0} that for a given $K > 0$, we have
  \begin{align*}
     J_0 \left(t+K,x; u_{K}^*(-K,\cdot)\right) = \int_{\R^d} p\left(t+K,x-y\right) u_{K}^*(-K,y)\ud y.
  \end{align*}
  Then plug in expression \eqref{E:u_K-K} for $u_K^*$, and use the semigroup
  property for the heat kernel. This allows to write
  \begin{align}\label{E_:J_0}
    J_0 \left(t+K,x; u_{K}^*(-K,\cdot)\right)  = & J_0(t+K+1,x;\mu) + S_K(t,x),
  \end{align}
  where the stochastic integral in~\eqref{E_:J_0} is defined by
  \begin{align}\label{E:Def-SK}
    S_{K}(t,x) \coloneqq \int_{-K-1}^{-K} \int_{\R^d} p\left(t-s,x-y\right) b\left(u_{K}^*(s,y)\right) W(\ud s, \ud y).
  \end{align}
  Reporting this expression into the definition \eqref{E:I_0-d} of $I_0^K
  (t,x)$, some elementary manipulations show that
  \begin{align}\label{E:I_0-g}
    I_0^K(t,x) \le 4 \Theta(t + K)  + 8 \sup_{L \ge K}\Norm{S_{L}(t,x)}_2^2
  \end{align}
  where for all $\tau \ge 0$, the term $\Theta(\tau)$ can be further decomposed
  as
  \begin{align}\label{E:Def-Theta}
    \Theta(\tau) \coloneqq \sup_{s,r \ge \tau}\sup_{x\in\R^d} \left| J_0(s+1,x;\mu) - J_0(r+1,x;\mu)\right|^2.
  \end{align}

  We now proceed to bound the terms in relation \eqref{E:I_0-g}. To begin with,
  it is clear from condition~\eqref{E:InitData-Limit} that the term $\Theta (t)$
  in \eqref{E:Def-Theta} is such that
  \begin{align}\label{E:Theta->0}
    \sup_{\tau\ge 0} \Theta(\tau) < \infty; \quad\text{and}\quad
    \Theta(\tau) \downarrow 0, \text{ as $\tau \to\infty$}.
  \end{align}
  Therefore we immediately get $\lim_{K\to\infty}\Theta(t + K)=0$. Our main task
  is thus to bound the quantity $S_K (t,x)$ introduced in~\eqref{E:Def-SK}.
  However, this term can be estimated thanks to a direct application of
  relation~\eqref{E:inner-v} plus Cauchy-Schwarz's inequality as follows:
  \begin{align*}
    \Norm{S_{K}(t,x)}_2^2
    \le L_b^2 \int_{-K-1}^{-K}\ud s \iint_{\R^{2d}} \ud y \ud y'\:
    & p\left(t-s,x-y\right)  \Norm{u_{K}^*(-K,y)}_2 \\ \times f(y-y')
    & p\left(t-s,x-y'\right) \Norm{u_{K}^*(-K,y')}_2.
  \end{align*}
  Plugging our moment bound~\eqref{E:BddMnt} into the above inequality, we end
  up with
  \begin{align}\label{ieq_sk-l}
    \Norm{S_{K}(t,x)}_2^2 \le \frac{1}{1-4 L_b^2\Upsilon(0)}
    \int_{-K-1}^{-K}\ud s \iint_{\R^{2d}} \ud y  \ud y'\:
    & p\left(t-s,x-y\right)  J_0(1,y;|\mu|) \nonumber \\
      \times f(y-y')
    & p\left(t-s,x-y'\right) J_0(1,y';|\mu|).
  \end{align}
  Therefore, using the constant $C_\mu$ defined in~\eqref{E:Cmu} and the
  function $H(t)$ in \eqref{E:H}, we have
  \begin{align}\label{E:SK-k}
     \sup_{L \ge K}
    & \Norm{S_{L}(t,x)}_2^2 \nonumber\\
    & \le \frac{C_\mu^2}{1-4 L_b^2\Upsilon(0)}
      \sup_{L \ge K}\int_{t+L}^{t+L+1}\ud s \iint_{\R^{2d}} \ud y  \ud y'\:
      p\left(s,x-y\right) p\left(s,x-y'\right) f(y-y')\nonumber\\
    & = \frac{C_\mu^2}{1-4 L_b^2\Upsilon(0)} \sup_{L \ge K}\left[ H(t+L) - H(t+L+1) \right]\nonumber \\
    & \le \frac{C_\mu^2}{1-4 L_b^2\Upsilon(0)}  H(t+K),
  \end{align}
  where the last equality is due to Lemma~\ref{L:H}. Summarizing our
  considerations for the term $I_0^K$, we gather relations \eqref{E:I_0-g},
  \eqref{E:Theta->0}, and \eqref{E:SK-k}, and get
  \begin{align}\label{E_:I_0}
    I_0^K(t,x) \le 4 \Theta (t + K) + \frac{8C_\mu^2H(t+K)}{1-4 L_b^2\Upsilon(0)}.
  \end{align}
  \medskip

  \noindent{\textit{Step 5:~Term $I_1^K(t,x)$.}} For the term $I_1^K$ defined
  by~\eqref{E:I_1-e}, one proceeds similarly to $I_0^K$. That is we start by
  using relation~\eqref{E:inner-v}, which yields
  \begin{align}\label{E:I_1-m}
    I_1^K (t,x) \le L_b^2 \sup_{L \ge K} \int_{-L}^{-K} \ud s \iint_{\R^{2d}}\: \ud y \ud y'
    & p(t-s, x-y)  \Norm{u_{L}(s,y )}_2 \nonumber \\
    \times f(y - y')
    & p(t-s, x-y') \Norm{u_{L}(s,y')}_2 .
  \end{align}
  Next, we notice that $u_{K}(s,y) \coloneqq u(s + K + 1, y)$, where $u$ is the
  solution to~\eqref{E:SHE}. Hence, applying the moment bound~\eqref{E:BddMnt}
  as for~\eqref{E:I_1-e}, we see that
  \begin{align*}
    \Norm{u_{L}(s,y)}_2^2 = \Norm{u(s+L+1,y)}_2^2 \le
    \frac{J_0^2(s+L+1,y;|\mu|)}{L_b^2(1-4L_b^2 \Upsilon(0))}.
  \end{align*}
  Reporting his information into \eqref{E:I_1-m}, we have thus obtained an
  inequality which mimics relation~\eqref{ieq_sk-l}:
  \begin{align*}
    I_1^K (t,x) \le \frac{1}{1-4L_b^2\Upsilon(0)} \sup_{L \ge K}\int_{-L}^{-K} \ud s \iint_{\R^{2d}}\: \ud y \ud y'
    & p(t-s, x-y)  J_0(s+L+1,y;|\mu|) \\ \times f(y - y')
    & p(t-s, x-y') J_0(s+L+1,y';|\mu|) .
  \end{align*}
  From there, we basically repeat the calculations leading to \eqref{E:SK-k}. We
  let the patient reader check that
  \begin{align}\label{E_:I_1}
    I_1^K (t,x) \le \frac{\widehat{C}_\mu^2\: H(t+K)}{1-4L_b^2\Upsilon(0)},
  \end{align}
  where we recall that $\widehat{C}_\mu$ is introduced in \eqref{E:Cmu} and the
  function $H$ is given by~\eqref{E:H}. \medskip

  \noindent\textit{Step 6:~Term $I_2^K(t,x)$.} This term is treated very
  similarly to $I_0^K (t,x)$ and $I_1^K(t,x)$. Let us just summarize briefly
  the computations for the sake of completeness. Specifically, starting from the
  definition of $I_2^K$, applying relation~\eqref{E:inner-v} and the
  Cauchy-Schwarz inequality, we get
  \begin{align*}
    I_2^K (t,x) \le L_b^2\: \sup_{L \ge K} \int_{- K}^t \ud s \iint_{\R^{2d}} \ud y \ud y'\:
                     & p(t - s, y ) \Norm{u_{L}(s,x-y ) - u_{K}(s,x-y )}_2 \\
    \times f(y - y') & p(t - s, y') \Norm{u_{L}(s,x-y') - u_{K}(s,x-y')}_2.
  \end{align*}
  Recalling the notation for $\mathcal{J}(t + K)$ in~\eqref{E:CalJK(t,x)} and
  the definition of $k(s)$ in~\eqref{E:k}, we see that
  \begin{align}\label{E_:I_2}
    I_2^K (t,x) & \le L_b^2\: \int_{- K}^t \ud s\: \mathcal{J}\left(s + K\right) \iint_{\R^{2d}} \ud y \ud y'\:  p(t - s, y ) f(y - y') p(t - s, y') \notag \\
                & =   L_b^2 \int_0^{t + K} \ud s\: \mathcal{J}(t + K - s) k(s) .
  \end{align}

  \noindent{\textit{Step 7:~Conclusion}} In order to conclude the proof
  of~\eqref{E:L2Lim}, combine relations~\eqref{E_:I_0}, \eqref{E_:I_1},
  and~\eqref{E_:I_2} into~\eqref{E:J-n}. Also, recall that $\mathcal{J} (t + K)$
  is defined by~\eqref{E:CalJK(t,x)}. We get that for all $t > K$,
  \begin{equation}\label{E:b1}
    \mathcal{J} (t + K) \le  g_{K}(t) + 2L_b^2\int_0^{t + K} \mathcal{J}(t + K - s) k(s) \ud s,
  \end{equation}
  where the function $g_{K}$ is defined as
  \begin{equation}\label{E:b2}
    g_{K}(t) \coloneqq
    16\Theta(t + K)
    + \frac{4\left(8C_\mu^2 + \widehat{C}_\mu^2\right) H(t + K)}{1-4L_b^2\Upsilon(0)} .
  \end{equation}
  Now recall that $\Theta$ and $H$ are respectively introduced
  in~\eqref{E:Def-Theta} and~\eqref{E:H}. Moreover, owing to~\eqref{E:Theta->0}
  and~\eqref{E:Limits-H}, we have that $g_K$ in~\eqref{E:b2} decreases to 0 as
  $t\to\infty$. Hence one can safely apply~\eqref{E:f-to=0} in order to get
  $\lim_{K \to \infty} \mathcal{J} (t + K) = 0$ for any fixed $t \in \R$
  provided that $4L_b^2 \Upsilon(0) < 1$, which is guaranteed by
  condition~\emph{\ref{it:iii-InvMea}} of Theorem~\ref{T:L2Lim}. This thereby
  confirms~\eqref{E:JK->0}. This proves part~\emph{\ref{it:L2Lim-1}} of
  Theorem~\ref{T:L2Lim}.
\end{proof}

\begin{proof}[Proof of item~\ref{it:L2Lim-2} in Theorem~\ref{T:L2Lim}]
  For $i=1,2$, let $u_i$ be the solution to~\eqref{E:SHE} starting from the
  rough initial condition $\mu_i$ satisfying~\eqref{E:InitData-Limit}.   Let
  $Z_i$ be the respective limiting random fields. Set $u \coloneqq u_1-u_2$,
  $\mu \coloneqq \mu_1 - \mu_2$, $\widetilde{b}(u) \coloneqq b(u+u_2)-b(u_2)$,
  and $Z \coloneqq Z_1 - Z_2$. Assume that
  \begin{align}\label{E_:Unique-mu}
    \lim_{t\to\infty}\sup_{x\in \R^d} \left\lvert J_0(t,x;\mu)\right\rvert = 0.
  \end{align}
  Hence, $u$ is the solution to~\eqref{E:SHE} with the diffusion coefficient $b$
  replaced by $\widetilde{b}$ starting from $\mu$. Note that $\widetilde{b}(0) =
  0$ and $\widetilde{b}$ and $b$ share the same Lipschitz constant $L_b$. Thus,
  the arguments in \textit{Steps~1--7} (proof of Theorem~\ref{T:L2Lim}
  item~\ref{it:L2Lim-1}) still apply for the current $u$. We thus get a limiting
  random field called $Z$. Under this setup, it suffices to prove that $Z
  \stackrel{(d)}{=} 0$, i.e., the limiting random field follows the same law as
  the trivial (vanishing) field. To this aim, we can resort again to a similar
  seven-step proof as for part~\emph{\ref{it:L2Lim-1}}. Below, we only highlight
  the modifications. In particular, \textit{Step~1} remains unchanged, and we
  recall that $u_K$ denotes the solution restarted from negative time $-K$ after
  the system has run for one unit of time from $\mu$. In \textit{Step~2}, all
  occurrences of $u_L$ and suprema over $L\ge K$ should be removed. Notably, the
  quantity $\mathcal{J}(t+K)$ defined in~\eqref{E:CalJK(t,x)} now takes the
  following form
  \begin{equation}\label{E:e1}
    \mathcal{J}(t+K) = \sup_{x\in\R^d} \E\left[u_K^2(t,x)\right].
  \end{equation}
  Proceed similarly in \textit{Step~3}. Now, in the expression of $I_0^K$
  (see\eqref{E:I_0-d}), only one term remains---the one involving $u_K^*$.
  Additionally, $I_1^K \equiv 0$ (see~\eqref{E:I_1-e}). In \textit{Step~4}, the
  upper bound for $\Norm{S_K}_2^2$ as given in~\eqref{E:SK-k} remains valid. The
  expression $\Theta(\tau)$ given in~\eqref{E:Def-Theta} should be replaced by
  \begin{align}\label{E:Def-tilde-Theta}
    \widetilde{\Theta} (\tau) \coloneqq \sup_{s \ge \tau}\sup_{x\in\R^d} \left| J_0(s+1,x;\mu)\right|^2.
  \end{align}
  Condition~\eqref{E_:Unique-mu} implies that both properties
  in~\eqref{E:Theta->0} with $\Theta$ replaced by $\widetilde{\Theta}$ still
  hold. One can skip \textit{Step~5} since $I_1^K\equiv 0$. One can also repeat
  \textit{Step~6} in the proof of item~\emph{\ref{it:L2Lim-1}} above, with
  $\cj(r)$ replaced by the quantity~\eqref{E:e1}. This yields an estimate for
  $I_{2}^{K}$ which is similar to~\eqref{E_:I_2}. Then, in \textit{Step~7} we
  obtain~\eqref{E:b1} and~\eqref{E:b2}, but with $\Theta$ replaced by
  $\widetilde{\Theta}$. Finally, since $\widetilde{\Theta}$ now vanishes as
  $\tau\to\infty$, we conclude in the same way as in \textit{Step~7} that
  $\lim_{K\to\infty} \mathcal{J}(t+K) = 0$. Thus, for any $t\in\R$ and
  $x\in\R^d$ fixed, $u_K(t,x)\to 0$ in $L^2(\Omega)$ as $K\to\infty$ and hence
  the convergence also holds in law. By part~\emph{\ref{it:L2Lim-1}} of
  Theorem~\ref{T:L2Lim}, we also know that $u_K(t,x)\to Z(x)$ in law as
  $K\to\infty$. Therefore, the two limits have to be equal a.s., namely,
  \begin{align*}
     Z(x) = 0, \quad \text{a.s. for all $x\in\R^d$,}
  \end{align*}
  which immediately implies that the finite dimensional distributions of the
  field $Z$ are the same as those of the zero field. This completes the proof of
  part~\emph{\ref{it:L2Lim-2}} of Theorem~\ref{T:L2Lim}.
\end{proof}

\begin{remark}
  Our result in Theorem~\ref{T:L2Lim} is stated as a convergence in
  $L_\rho^{2}(\R^d)$. Under condition~\emph{\ref{it:iv-InvMea}} of
  Theorem~\ref{T:InvMea}, and considering some weighted
  \textit{Garsia-Rodemich-Rumsey} type lemmas, we could have improved the
  topology in which the convergence holds. Namely one can obtain a convergence
  in distribution in weighted H\"older spaces of the form $C_\rho^\beta(\R^d)$,
  for a proper H\"older exponent $\beta$. However, since the use of weighted
  Garsia-Rodemich-Rumsey inequalities is cumbersome, we have postponed this
  objective to a further publication for sake of conciseness.
\end{remark}

\appendix
\section{A Gronwall-type lemma}\label{S:Gronwall}

In this appendix, we prove a generic Gronwall-type lemma, which plays a key role
in the proof of Theorem~\ref{T:L2Lim}.

\begin{lemma}\label{L:Gronwall}
  Let $g(\cdot)$ be a nonnegative and monotone function on $[0,\infty)$ that
  decreases to $0$ as $t\to\infty$, and such that $g(0)<\infty$. Let $k(\cdot)$
  be a nonnegative and integrable function on $[0,\infty)$ and define
  \begin{align}\label{E:def-gnr-H}
    h(t) \coloneqq \int_t^{\infty} k (s) \ud s.
  \end{align}
  Then, we have:

  \begin{enumerate}[wide, labelwidth=!, labelindent=0pt, label=\emph{(\roman*)}]
    \setlength\itemsep{.1in}

    \item\label{it:Int-Hh-i} For any nonnegative integer $n$ and $t\ge 0$, it
      holds that
      \begin{align}\label{E:g*h}
        \int_0^t g \left(\frac{t - s}{2^n}\right) k(s) \ud s
        \le g(0) h \left(\frac{t}{2^{n + 1}}\right) + h(0) g \left(\frac{t}{2^{n + 1}}\right).
      \end{align}
      In particular, when $g = h$ as defined in \eqref{E:def-gnr-H}, the above
      inequality~\eqref{E:g*h} reduces to
      \begin{align}\label{E:h*h}
        \int_0^t h \left(\frac{t - s}{2^n}\right) k(s) \ud s
        \le 2 h(0) h \left(\frac{t}{2^{n + 1}}\right).
      \end{align}

    \item\label{it:Int-Hh-ii} For any nonnegative integer $n$ and $t\ge 0$, it
      holds that
      \begin{align}\label{E:int_Hhn}
        \int_{[0, t]^{n}_<}
            & \ud u_n \cdots \ud u_1\:  g \left(t - u_n \right) \prod_{i = 1}^n k(u_{i} - u_{i - 1} ) \nonumber \\
        \le & \left( 2^{n} - 1\right) g(0) h(0)^{n-1} h \left(\frac{t}{2^{n}}\right) + h(0)^n g \left(\frac{t}{2^{n}}\right),
      \end{align}
      where $[0, t]_<^{n} \coloneqq \left\{(s_1,\dots, s_n) \in [0,t]^n; 0 <
      s_1< \dots < s_n <t\right\}$ and we have used the convention that $u_0
      \equiv 0$.

    \item\label{it:Int-Hh-iii} Let $f$ be a function on $[0,\infty)$ such that
      \begin{align*}
        f(t) \le g(t) + \beta \int_{0}^t f(t - s) k(s) \ud s, \quad \text{for all $t\ge 0$ with $\beta>0$}.
      \end{align*}
      Then, whenever $\beta < [2 h(0)]^{-1}$, the following convergence is true,
      \begin{align}\label{E:f-to=0}
        \lim_{t\to \infty} f(t) = 0.
      \end{align}

  \end{enumerate}

\end{lemma}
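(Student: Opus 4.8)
The plan is to prove the three items in order, using item~\ref{it:Int-Hh-i} as the computational engine for item~\ref{it:Int-Hh-ii}, and then feeding the iterated-kernel bound of~\ref{it:Int-Hh-ii} into a Neumann-series (Picard iteration) argument for~\ref{it:Int-Hh-iii}. For item~\ref{it:Int-Hh-i} I would split the integral in~\eqref{E:g*h} at the midpoint $s=t/2$. On $[0,t/2]$ one has $(t-s)/2^n \ge t/2^{n+1}$, so the monotonicity of $g$ gives $g((t-s)/2^n)\le g(t/2^{n+1})$, and bounding $\int_0^{t/2}k\le\int_0^\infty k = h(0)$ produces the term $h(0)\,g(t/2^{n+1})$. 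On $[t/2,t]$ I use the crude bound $g((t-s)/2^n)\le g(0)$ together with $\int_{t/2}^t k \le h(t/2)\le h(t/2^{n+1})$, the last step holding because $h$ is decreasing and $t/2^{n+1}\le t/2$; this yields $g(0)\,h(t/2^{n+1})$. Adding the two pieces gives~\eqref{E:g*h}, and specializing $g=h$ collapses both prefactors to $h(0)$, producing~\eqref{E:h*h}.

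For item~\ref{it:Int-Hh-ii} I would first observe that, after the change of variables $w_i = u_i - u_{i-1}$, the iterated integral is exactly the convolution $A_n \coloneqq g * k^{*n}$, so that $A_{n+1}(t) = \int_0^t A_n(t-s)k(s)\,\ud s$. I then induct on $n$: the case $n=0$ reads $A_0 = g$ (empty product), which matches the right-hand side of~\eqref{E:int_Hhn} since the coefficient $2^n-1$ vanishes. Assuming the bound at level $n$, I insert the inductive estimate for $A_n(t-s)$ and apply item~\ref{it:Int-Hh-i} with iteration index $n$: the resulting $h$-integral is controlled by~\eqref{E:h*h} and the $g$-integral by~\eqref{E:g*h}. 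Collecting coefficients, the prefactor of $h(t/2^{n+1})$ becomes $2(2^n-1)g(0)h(0)^n + g(0)h(0)^n = (2^{n+1}-1)g(0)h(0)^n$, while the prefactor of $g(t/2^{n+1})$ becomes $h(0)^{n+1}$, which is precisely~\eqref{E:int_Hhn} at level $n+1$.

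For item~\ref{it:Int-Hh-iii} I would iterate the hypothesis $f\le g + \beta Tf$, where $(Tf)(t)=\int_0^t f(t-s)k(s)\,\ud s$, to obtain $f \le \sum_{n=0}^{N-1}\beta^n A_n + \beta^N T^N f$, using $T^n g = A_n$. Assuming $f\ge 0$ and locally bounded (as holds in the application), taking suprema in the hypothesis and using $\beta h(0)<1/2<1$ records the a priori bound $\sup_t f(t)\le g(0)/(1-\beta h(0))\eqqcolon M_\infty<\infty$. Since $\int_0^\infty k^{*N} = h(0)^N$, the Picard remainder obeys $\beta^N T^N f \le M_\infty (\beta h(0))^N \to 0$. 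Using~\eqref{E:int_Hhn} with $h(\cdot)\le h(0)$ and $g(\cdot)\le g(0)$ gives the $t$-uniform domination $\beta^n A_n(t)\le g(0)(2\beta h(0))^n$, whose sum converges precisely because $\beta<[2h(0)]^{-1}$. Letting $N\to\infty$ yields $f(t)\le \sum_{n\ge 0}\beta^n A_n(t)$; since each $A_n(t)\to 0$ as $t\to\infty$ (both $h(t/2^n)$ and $g(t/2^n)$ tend to $0$), a Weierstrass $M$-test lets me exchange limit and sum to conclude~\eqref{E:f-to=0}.

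The main obstacle is the bookkeeping in item~\ref{it:Int-Hh-iii}: one must simultaneously kill the Picard remainder, which forces the a priori sup-bound on $f$, and secure summability of the Neumann series. The delicate point is that the factor $2^n$ generated by item~\ref{it:Int-Hh-ii} combines with $\beta^n$ into the geometric ratio $2\beta h(0)$, which is exactly why the admissible threshold is $[2h(0)]^{-1}$ rather than $h(0)^{-1}$; tracking how the factor-of-two losses in~\ref{it:Int-Hh-i} accumulate and pin down this constant is the crux of the argument.
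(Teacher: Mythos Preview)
Your proof is correct and follows essentially the same strategy as the paper's: split-and-bound for~\ref{it:Int-Hh-i}, induction fed by~\ref{it:Int-Hh-i} for~\ref{it:Int-Hh-ii}, and Picard iteration plus dominated convergence for~\ref{it:Int-Hh-iii}. Two cosmetic differences: in~\ref{it:Int-Hh-i} the paper splits at $s=t/2^{n+1}$ rather than your $s=t/2$ (both choices lead to the same bound), and in~\ref{it:Int-Hh-iii} the paper writes the infinite Neumann series directly without isolating the remainder $\beta^N T^N f$, whereas you supply the a priori sup bound $M_\infty=g(0)/(1-\beta h(0))$ to kill it explicitly---your version is the more careful of the two, and correctly flags that nonnegativity and local boundedness of $f$ (tacit in the lemma, satisfied in the application) are needed for that step.
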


\begin{remark}
  This result generalizes \cite[Lemma 3.5]{gu.li:20:fluctuations}, in which $g =
  h$, and $k(t) = 1 \wedge t^{-d/2}$. The difficulty of the above Gronwall-type
  lemma stems from not knowing the exact decay rate of the kernel function
  $k(s)$ as $s\to\infty$, as well as the lack of knowledge regarding whether and
  how $k(s)$ diverges as $s$ approaches zero (see, e.g., the Riesz-type kernel
  given in~\eqref{E:Riesz-type}). The proof below is based on the dominated
  convergence argument. For comparison, when dealing with specific forms of the
  kernel function $k(s)$, such as $k(s) = s^{-1/\alpha}$ for $\alpha>1$,
  explicit computations can be carried out; see Lemma A.2
  of~\cite{chen.hu.ea:21:regularity} for more details.
\end{remark}

\begin{proof}[Proof of Lemma~\ref{L:Gronwall}]
  We will prove the 3 items successively.

  \noindent\emph{Part \ref{it:Int-Hh-i}.} Decompose the integral into two regions:
  \begin{align}\label{E:dec-H}
    \int_0^t g \left(\frac{t - s}{2^n}\right) k(s) \ud s
    = \left( \int_0^{\frac{t}{2^{n+1}}} + \int_{\frac{t}{2^{n+1}}}^t \right)g \left(\frac{t - s}{2^n}\right) k(s) \ud s
    \eqqcolon I_1 + I_2.
  \end{align}
  Noting that both functions $g$ and $h$ are monotonically decreasing on
  $[0,\infty)$, we see that
  \begin{align}\label{E:dec-H2}
    I_1 & \le g \left(\frac{t}{2^{n + 1}}\right) \int_0^{\frac{t}{2^{n+1}}} k (s) \ud s
          \le h(0) g \left(\frac{t}{2^{n + 1}}\right) \quad \text{and}\\[5pt] \label{E:dec-H1}
    I_2 & \le g(0) \int_{\frac{t}{2^{n + 1}}}^{\infty} k(s) \ud s
           =  g(0) h \left(\frac{t}{2^{n + 1}}\right).
  \end{align}
  Plugging~\eqref{E:dec-H2} and~\eqref{E:dec-H1} into~\eqref{E:dec-H} proves
  part~\emph{\ref{it:Int-Hh-i}} of Lemma~\ref{L:Gronwall}. \medskip

  \noindent\emph{Part \ref{it:Int-Hh-ii}.} We will prove~\eqref{E:int_Hhn} by
  induction in $n$. When $n = 1$, \eqref{E:int_Hhn} is simply a consequence of
  part~\emph{\ref{it:Int-Hh-i}}. Assume that $n\ge 2$ and that \eqref{E:int_Hhn}
  holds for $n-1$. For $t> u_1 > 0$, by the induction assumption, we see that
  \begin{align*}
    \int_{[u_1, t]^{n - 1}_<}
        & \ud u_n \cdots \ud u_2\: g \left(t - u_n \right) \prod_{i = 2}^n k(u_{i} - u_{i - 1} )                                    \\
    =   & \int_{[0, t - u_1]^{n - 1}_<} \ud v_n \cdots \ud v_2\: g \left(t - u_1 - v_n \right) \prod_{i = 2}^n k(v_{i} - v_{i - 1}) \\
    \le & \left( 2^{n-1} - 1\right) g(0) h(0)^{n-2} h \left(\frac{t-u_1}{2^{n-1}}\right) + h(0)^{n-1} g \left(\frac{t-u_1}{2^{n-1}}\right).
  \end{align*}
  Denote the left-hand-side (resp.~right-hand-side) of the
  inequality~\eqref{E:int_Hhn} by $L_n(t)$ (resp. $R_n(t)$). Integrating both
  sides of the above inequality from $0$ to $t$ with respect to the $u_1$
  variable gives
  \begin{align*}
    L_n(t)
    \le & \left( 2^{n-1} - 1\right) g(0) h(0)^{n-2} \int_0^t h \left(\frac{t - u_1}{2^{n - 1}}\right) k(u_1) \ud u_1 \\
        & + h(0)^{n-1} \int_0^t g \left(\frac{t - u_1}{2^{n - 1}}\right) k (u_1) \ud u_1
    \le   R_n(t),
  \end{align*}
  where we have applied the bounds~\eqref{E:g*h} and~\eqref{E:h*h} to the above
  two integrals. This proves part~\emph{\ref{it:Int-Hh-ii}} of
  Lemma~\ref{L:Gronwall}. \medskip

  \noindent\emph{Part \ref{it:Int-Hh-iii}.} By iteration, one can write
  \begin{align*}
    f(t) \le & g(t) + \sum_{n = 1}^{\infty} \beta^n \int_0^t \ud s_n \int_{0}^{t  - s_n} \ud s_{n - 1} \cdots \int_0^{t - \sum_{j = 2}^n s_j} \ud s_1\:   g \left(t  - \sum_{j = 1}^n s_j \right) \prod_{i = 1}^n k(s_i) \\
    =        & g(t) + \sum_{n = 1}^{\infty} \beta^n \int_{[0, t]^{n}_<} \ud u_n \cdots \ud u_1 \:  g \left(t - u_n \right) \prod_{i = 1}^n k(u_{i} - u_{i - 1} ).
  \end{align*}
  Thus, part~\emph{\ref{it:Int-Hh-ii}} implies that
  \begin{align} \label{E:f(t)}
    f(t) \le & g (t) + \sum_{n = 1}^{\infty}  \beta^{n} \left[ \left( 2^{n} - 1\right) g(0) h(0)^{n-1} h \left(\frac{t}{2^{n}}\right) + h(0)^n g \left(\frac{t}{2^{n}}\right) \right] \nonumber \\
         \le & g (t) + \max\{1, g(0), h(0)\} \sum_{n = 1}^{\infty}  2\beta [2\beta h(0)]^{n - 1} \left( h \left(\frac{t}{2^{n}}\right)
             + g \left(\frac{t}{2^{n}}\right) \right).
  \end{align}
  By replacing $t$ by $0$ in the last line of~\eqref{E:f(t)} we see that
  provided that $2\beta h(0)<1$, the summation is finite. Hence, one can apply
  the dominated convergence theorem to pass the limit of $t\to\infty$ inside the
  summation. This completes the whole proof of Lemma~\ref{L:Gronwall}.
\end{proof}

\section*{Acknowledgements}
\noindent S. Tindel is supported for this work by NSF grant DMS-2153915. L. Chen
and P. Xia are partially supported by NSF grant DMS-2246850 and L. Chen is
partially supported by a travel grant (MPS-TSM-00959981) from the Simons
foundation.

\bigskip

\bibliographystyle{amsrefs}
\begin{bibdiv}
\begin{biblist}

\bib{amir.corwin.ea:11:probability}{article}{
      author={Amir, Gideon},
      author={Corwin, Ivan},
      author={Quastel, Jeremy},
       title={Probability distribution of the free energy of the continuum
  directed random polymer in {$1+1$} dimensions},
        date={2011},
        ISSN={0010-3640},
     journal={Comm. Pure Appl. Math.},
      volume={64},
      number={4},
       pages={466\ndash 537},
         url={https://doi.org/10.1002/cpa.20347},
      review={\MR{2796514}},
}

\bib{assing.manthey:03:invariant}{article}{
      author={Assing, Sigurd},
      author={Manthey, Ralf},
       title={Invariant measures for stochastic heat equations with unbounded
  coefficients},
        date={2003},
        ISSN={0304-4149},
     journal={Stochastic Process. Appl.},
      volume={103},
      number={2},
       pages={237\ndash 256},
         url={https://doi.org/10.1016/S0304-4149(02)00211-9},
      review={\MR{1950765}},
}

\bib{bensoussan.lions:78:applications}{book}{
      author={Bensoussan, A.},
      author={Lions, J.-L.},
       title={Applications des in\'{e}quations variationnelles en contr\^{o}le
  stochastique},
      series={M\'{e}thodes Math\'{e}matiques de l'Informatique [Mathematical
  Methods of Information Science], No. 6},
   publisher={Dunod, Paris},
        date={1978},
        ISBN={2-04-010336-8},
      review={\MR{513618}},
}

\bib{bertini.cancrini:95:stochastic}{article}{
      author={Bertini, Lorenzo},
      author={Cancrini, Nicoletta},
       title={The stochastic heat equation: {F}eynman-{K}ac formula and
  intermittence},
        date={1995},
        ISSN={0022-4715},
     journal={J. Statist. Phys.},
      volume={78},
      number={5-6},
       pages={1377\ndash 1401},
         url={https://doi.org/10.1007/BF02180136},
      review={\MR{1316109}},
}

\bib{biane:86:relations}{article}{
      author={Biane, Ph.},
       title={Relations entre pont et excursion du mouvement brownien
  r\'{e}el},
        date={1986},
        ISSN={0246-0203},
     journal={Ann. Inst. H. Poincar\'{e} Probab. Statist.},
      volume={22},
      number={1},
       pages={1\ndash 7},
         url={http://www.numdam.org/item?id=AIHPB_1986__22_1_1_0},
      review={\MR{838369}},
}

\bib{bogachev.rockner:01:elliptic}{article}{
      author={Bogachev, Vladimir~I.},
      author={R\"{o}ckner, Michael},
       title={Elliptic equations for measures on infinite-dimensional spaces
  and applications},
        date={2001},
        ISSN={0178-8051},
     journal={Probab. Theory Related Fields},
      volume={120},
      number={4},
       pages={445\ndash 496},
         url={https://doi.org/10.1007/PL00008789},
      review={\MR{1853480}},
}

\bib{brzezniak.gatarek:99:martingale}{article}{
      author={Brze\'{z}niak, Zdzis{\l}aw},
      author={G\c{a}tarek, Dariusz},
       title={Martingale solutions and invariant measures for stochastic
  evolution equations in {B}anach spaces},
        date={1999},
        ISSN={0304-4149},
     journal={Stochastic Process. Appl.},
      volume={84},
      number={2},
       pages={187\ndash 225},
         url={https://doi.org/10.1016/S0304-4149(99)00034-4},
      review={\MR{1719282}},
}

\bib{carmona.molchanov:94:parabolic}{article}{
      author={Carmona, Ren\'{e}~A.},
      author={Molchanov, S.~A.},
       title={Parabolic {A}nderson problem and intermittency},
        date={1994},
        ISSN={0065-9266},
     journal={Mem. Amer. Math. Soc.},
      volume={108},
      number={518},
       pages={viii+125},
         url={https://doi.org/10.1090/memo/0518},
      review={\MR{1185878}},
}

\bib{cerrai:01:second}{book}{
      author={Cerrai, Sandra},
       title={Second order {PDE}'s in finite and infinite dimension},
      series={Lecture Notes in Mathematics},
   publisher={Springer-Verlag, Berlin},
        date={2001},
      volume={1762},
        ISBN={3-540-42136-X},
         url={https://doi.org/10.1007/b80743},
        note={A probabilistic approach},
      review={\MR{1840644}},
}

\bib{cerrai:03:stochastic}{article}{
      author={Cerrai, Sandra},
       title={Stochastic reaction-diffusion systems with multiplicative noise
  and non-{L}ipschitz reaction term},
        date={2003},
        ISSN={0178-8051},
     journal={Probab. Theory Related Fields},
      volume={125},
      number={2},
       pages={271\ndash 304},
         url={https://doi.org/10.1007/s00440-002-0230-6},
      review={\MR{1961346}},
}

\bib{chen.dalang:15:moments}{article}{
      author={Chen, Le},
      author={Dalang, Robert~C.},
       title={Moments and growth indices for the nonlinear stochastic heat
  equation with rough initial conditions},
        date={2015},
        ISSN={0091-1798},
     journal={Ann. Probab.},
      volume={43},
      number={6},
       pages={3006\ndash 3051},
         url={https://doi.org/10.1214/14-AOP954},
      review={\MR{3433576}},
}

\bib{chen.eisenberg:24:invariant}{article}{
      author={Chen, Le},
      author={Eisenberg, Nicholas},
       title={Invariant {M}easures for the {N}onlinear {S}tochastic {H}eat
  {E}quation with {N}o {D}rift {T}erm},
        date={2024},
        ISSN={0894-9840},
     journal={J. Theoret. Probab.},
      volume={37},
      number={2},
       pages={1357\ndash 1396},
         url={https://doi.org/10.1007/s10959-023-01302-4},
      review={\MR{4751295}},
}

\bib{chen.hu.ea:21:regularity}{article}{
      author={Chen, Le},
      author={Hu, Yaozhong},
      author={Nualart, David},
       title={Regularity and strict positivity of densities for the nonlinear
  stochastic heat equation},
        date={2021},
        ISSN={0065-9266},
     journal={Mem. Amer. Math. Soc.},
      volume={273},
      number={1340},
       pages={v+102},
         url={https://doi.org/10.1090/memo/1340},
      review={\MR{4334477}},
}

\bib{chen.huang:19:comparison}{article}{
      author={Chen, Le},
      author={Huang, Jingyu},
       title={Comparison principle for stochastic heat equation on {$\Bbb
  R^d$}},
        date={2019},
        ISSN={0091-1798},
     journal={Ann. Probab.},
      volume={47},
      number={2},
       pages={989\ndash 1035},
         url={https://doi.org/10.1214/18-AOP1277},
      review={\MR{3916940}},
}

\bib{chen.huang.ea:19:dense}{article}{
      author={Chen, Le},
      author={Huang, Jingyu},
      author={Khoshnevisan, Davar},
      author={Kim, Kunwoo},
       title={Dense blowup for parabolic {SPDE}s},
        date={2019},
     journal={Electron. J. Probab.},
      volume={24},
       pages={Paper No. 118, 33},
         url={https://doi.org/10.1214/19-ejp372},
      review={\MR{4029421}},
}

\bib{chen.kim:19:nonlinear}{article}{
      author={Chen, Le},
      author={Kim, Kunwoo},
       title={Nonlinear stochastic heat equation driven by spatially colored
  noise: moments and intermittency},
        date={2019},
        ISSN={0252-9602},
     journal={Acta Math. Sci. Ser. B (Engl. Ed.)},
      volume={39},
      number={3},
       pages={645\ndash 668},
         url={https://doi.org/10.1007/s10473-019-0303-6},
      review={\MR{4066498}},
}

\bib{chen.ouyang.ea:23:parabolic}{article}{
      author={Chen, Le},
      author={Ouyang, Cheng},
      author={Vickery, William},
       title={Parabolic anderson model with colored noise on torus},
        date={2023August},
     journal={Preprint arXiv:2308.10802, to appear in Bernoulli},
         url={http://arXiv.org/abs/2308.10802},
}

\bib{comets:17:directed}{book}{
      author={Comets, Francis},
       title={Directed polymers in random environments},
      series={Lecture Notes in Mathematics},
   publisher={Springer, Cham},
        date={2017},
      volume={2175},
        ISBN={978-3-319-50486-5; 978-3-319-50487-2},
         url={https://doi.org/10.1007/978-3-319-50487-2},
        note={Lecture notes from the 46th Probability Summer School held in
  Saint-Flour, 2016},
      review={\MR{3444835}},
}

\bib{conus.khoshnevisan:12:on}{article}{
      author={Conus, Daniel},
      author={Khoshnevisan, Davar},
       title={On the existence and position of the farthest peaks of a family
  of stochastic heat and wave equations},
        date={2012},
        ISSN={0178-8051},
     journal={Probab. Theory Related Fields},
      volume={152},
      number={3-4},
       pages={681\ndash 701},
         url={https://doi.org/10.1007/s00440-010-0333-4},
      review={\MR{2892959}},
}

\bib{cruzeiro:89:solutions}{article}{
      author={Cruzeiro, Ana~Bela},
       title={Solutions et mesures invariantes pour des \'{e}quations
  d'\'{e}volution stochastiques du type {N}avier-{S}tokes},
        date={1989},
        ISSN={0723-0869},
     journal={Exposition. Math.},
      volume={7},
      number={1},
       pages={73\ndash 82},
      review={\MR{982157}},
}

\bib{da-prato.zabczyk:96:ergodicity}{book}{
      author={Da~Prato, G.},
      author={Zabczyk, J.},
       title={Ergodicity for infinite-dimensional systems},
      series={London Mathematical Society Lecture Note Series},
   publisher={Cambridge University Press, Cambridge},
        date={1996},
      volume={229},
        ISBN={0-521-57900-7},
         url={https://doi.org/10.1017/CBO9780511662829},
      review={\MR{1417491}},
}

\bib{da-prato.zabczyk:02:second}{book}{
      author={Da~Prato, Giuseppe},
      author={Zabczyk, Jerzy},
       title={Second order partial differential equations in {H}ilbert spaces},
      series={London Mathematical Society Lecture Note Series},
   publisher={Cambridge University Press, Cambridge},
        date={2002},
      volume={293},
        ISBN={0-521-77729-1},
         url={https://doi.org/10.1017/CBO9780511543210},
      review={\MR{1985790}},
}

\bib{da-prato.zabczyk:14:stochastic}{book}{
      author={Da~Prato, Giuseppe},
      author={Zabczyk, Jerzy},
       title={Stochastic equations in infinite dimensions},
     edition={Second},
      series={Encyclopedia of Mathematics and its Applications},
   publisher={Cambridge University Press, Cambridge},
        date={2014},
      volume={152},
        ISBN={978-1-107-05584-1},
         url={https://doi.org/10.1017/CBO9781107295513},
      review={\MR{3236753}},
}

\bib{dalang:99:extending}{article}{
      author={Dalang, Robert~C.},
       title={Extending the martingale measure stochastic integral with
  applications to spatially homogeneous s.p.d.e.'s},
        date={1999},
        ISSN={1083-6489},
     journal={Electron. J. Probab.},
      volume={4},
       pages={no. 6, 29},
         url={https://doi.org/10.1214/EJP.v4-43},
      review={\MR{1684157}},
}

\bib{dalang.frangos:98:stochastic}{article}{
      author={Dalang, Robert~C.},
      author={Frangos, N.~E.},
       title={The stochastic wave equation in two spatial dimensions},
        date={1998},
        ISSN={0091-1798},
     journal={Ann. Probab.},
      volume={26},
      number={1},
       pages={187\ndash 212},
         url={https://doi.org/10.1214/aop/1022855416},
      review={\MR{1617046}},
}

\bib{dalang.mueller.ea:06:hitting}{article}{
      author={Dalang, Robert~C.},
      author={Mueller, C.},
      author={Zambotti, L.},
       title={Hitting properties of parabolic s.p.d.e.'s with reflection},
        date={2006},
        ISSN={0091-1798},
     journal={Ann. Probab.},
      volume={34},
      number={4},
       pages={1423\ndash 1450},
         url={https://doi.org/10.1214/009117905000000792},
      review={\MR{2257651}},
}

\bib{dalang.quer-sardanyons:11:stochastic}{article}{
      author={Dalang, Robert~C.},
      author={Quer-Sardanyons, Llu\'{i}s},
       title={Stochastic integrals for spde's: a comparison},
        date={2011},
        ISSN={0723-0869},
     journal={Expo. Math.},
      volume={29},
      number={1},
       pages={67\ndash 109},
         url={https://doi.org/10.1016/j.exmath.2010.09.005},
      review={\MR{2785545}},
}

\bib{deya.panloup.ea:19:rate}{article}{
      author={Deya, Aur\'{e}lien},
      author={Panloup, Fabien},
      author={Tindel, Samy},
       title={Rate of convergence to equilibrium of fractional driven
  stochastic differential equations with rough multiplicative noise},
        date={2019},
        ISSN={0091-1798},
     journal={Ann. Probab.},
      volume={47},
      number={1},
       pages={464\ndash 518},
         url={https://doi.org/10.1214/18-AOP1265},
      review={\MR{3909974}},
}

\bib{durrett.iglehart.ea:77:weak}{article}{
      author={Durrett, Richard~T.},
      author={Iglehart, Donald~L.},
      author={Miller, Douglas~R.},
       title={Weak convergence to {B}rownian meander and {B}rownian excursion},
        date={1977},
        ISSN={0091-1798},
     journal={Ann. Probability},
      volume={5},
      number={1},
       pages={117\ndash 129},
         url={https://doi.org/10.1214/aop/1176995895},
      review={\MR{436353}},
}

\bib{eckmann.hairer:01:invariant}{article}{
      author={Eckmann, Jean-Pierre},
      author={Hairer, Martin},
       title={Invariant measures for stochastic partial differential equations
  in unbounded domains},
        date={2001},
        ISSN={0951-7715},
     journal={Nonlinearity},
      volume={14},
      number={1},
       pages={133\ndash 151},
         url={https://doi.org/10.1088/0951-7715/14/1/308},
      review={\MR{1808628}},
}

\bib{etheridge.labbe:15:scaling}{article}{
      author={Etheridge, Alison~M.},
      author={Labb\'{e}, Cyril},
       title={Scaling limits of weakly asymmetric interfaces},
        date={2015},
        ISSN={0010-3616},
     journal={Comm. Math. Phys.},
      volume={336},
      number={1},
       pages={287\ndash 336},
         url={https://doi.org/10.1007/s00220-014-2243-2},
      review={\MR{3322375}},
}

\bib{flandoli:94:dissipativity}{article}{
      author={Flandoli, Franco},
       title={Dissipativity and invariant measures for stochastic
  {N}avier-{S}tokes equations},
        date={1994},
        ISSN={1021-9722,1420-9004},
     journal={NoDEA Nonlinear Differential Equations Appl.},
      volume={1},
      number={4},
       pages={403\ndash 423},
         url={https://doi.org/10.1007/BF01194988},
      review={\MR{1300150}},
}

\bib{funaki.olla:01:fluctuations}{article}{
      author={Funaki, Tadahisa},
      author={Olla, Stefano},
       title={Fluctuations for {$\nabla\phi$} interface model on a wall},
        date={2001},
        ISSN={0304-4149},
     journal={Stochastic Process. Appl.},
      volume={94},
      number={1},
       pages={1\ndash 27},
         url={https://doi.org/10.1016/S0304-4149(00)00104-6},
      review={\MR{1835843}},
}

\bib{gerolla.hairer.ea:23:fluctuations}{article}{
      author={Gerolla, Luca},
      author={Hairer, Martin},
      author={Li, Xue-Mei},
       title={Fluctuations of stochastic pdes with long-range correlations},
        date={2023March},
     journal={Preprint arXiv:2303.09811},
         url={http://arXiv.org/abs/2303.09811},
}

\bib{grafakos:14:modern}{book}{
      author={Grafakos, Loukas},
       title={Modern {F}ourier analysis},
     edition={Third},
      series={Graduate Texts in Mathematics},
   publisher={Springer, New York},
        date={2014},
      volume={250},
        ISBN={978-1-4939-1229-2; 978-1-4939-1230-8},
         url={https://doi.org/10.1007/978-1-4939-1230-8},
      review={\MR{3243741}},
}

\bib{gu.li:20:fluctuations}{article}{
      author={Gu, Yu},
      author={Li, Jiawei},
       title={Fluctuations of a nonlinear stochastic heat equation in
  dimensions three and higher},
        date={2020},
        ISSN={0036-1410},
     journal={SIAM J. Math. Anal.},
      volume={52},
      number={6},
       pages={5422\ndash 5440},
         url={https://doi.org/10.1137/19M1296380},
      review={\MR{4169750}},
}

\bib{hairer.mattingly:06:ergodicity}{article}{
      author={Hairer, Martin},
      author={Mattingly, Jonathan~C.},
       title={Ergodicity of the 2{D} {N}avier-{S}tokes equations with
  degenerate stochastic forcing},
        date={2006},
        ISSN={0003-486X},
     journal={Ann. of Math. (2)},
      volume={164},
      number={3},
       pages={993\ndash 1032},
         url={https://doi.org/10.4007/annals.2006.164.993},
      review={\MR{2259251}},
}

\bib{hu.huang.ea:15:stochastic}{article}{
      author={Hu, Yaozhong},
      author={Huang, Jingyu},
      author={Nualart, David},
      author={Tindel, Samy},
       title={Stochastic heat equations with general multiplicative {G}aussian
  noises: {H}\"{o}lder continuity and intermittency},
        date={2015},
     journal={Electron. J. Probab.},
      volume={20},
       pages={no. 55, 50},
         url={https://doi.org/10.1214/EJP.v20-3316},
      review={\MR{3354615}},
}

\bib{kalsi:20:existence}{article}{
      author={Kalsi, Jasdeep},
       title={Existence of invariant measures for reflected stochastic partial
  differential equations},
        date={2020},
        ISSN={0894-9840},
     journal={J. Theoret. Probab.},
      volume={33},
      number={3},
       pages={1755\ndash 1767},
         url={https://doi.org/10.1007/s10959-019-00906-z},
      review={\MR{4125974}},
}

\bib{kryloff.bogoliouboff:37:theorie}{article}{
      author={Kryloff, Nicolas},
      author={Bogoliouboff, Nicolas},
       title={La th\'{e}orie g\'{e}n\'{e}rale de la mesure dans son application
  \`a l'\'{e}tude des syst\`emes dynamiques de la m\'{e}canique non
  lin\'{e}aire},
        date={1937},
        ISSN={0003-486X},
     journal={Ann. of Math. (2)},
      volume={38},
      number={1},
       pages={65\ndash 113},
         url={https://doi.org/10.2307/1968511},
      review={\MR{1503326}},
}

\bib{kuksin.shirikyan:12:mathematics}{book}{
      author={Kuksin, Sergei},
      author={Shirikyan, Armen},
       title={Mathematics of two-dimensional turbulence},
      series={Cambridge Tracts in Mathematics},
   publisher={Cambridge University Press, Cambridge},
        date={2012},
      volume={194},
        ISBN={978-1-107-02282-9},
         url={https://doi.org/10.1017/CBO9781139137119},
      review={\MR{3443633}},
}

\bib{mattingly.pardoux:06:malliavin}{article}{
      author={Mattingly, Jonathan~C.},
      author={Pardoux, \'{E}tienne},
       title={Malliavin calculus for the stochastic 2{D} {N}avier-{S}tokes
  equation},
        date={2006},
        ISSN={0010-3640},
     journal={Comm. Pure Appl. Math.},
      volume={59},
      number={12},
       pages={1742\ndash 1790},
         url={https://doi.org/10.1002/cpa.20136},
      review={\MR{2257860}},
}

\bib{misiats.stanzhytskyi.ea:16:existence}{article}{
      author={Misiats, Oleksandr},
      author={Stanzhytskyi, Oleksandr},
      author={Yip, Nung~Kwan},
       title={Existence and uniqueness of invariant measures for stochastic
  reaction-diffusion equations in unbounded domains},
        date={2016},
        ISSN={0894-9840},
     journal={J. Theoret. Probab.},
      volume={29},
      number={3},
       pages={996\ndash 1026},
         url={https://doi.org/10.1007/s10959-015-0606-z},
      review={\MR{3540487}},
}

\bib{misiats.stanzhytskyi.ea:20:invariant}{article}{
      author={Misiats, Oleksandr},
      author={Stanzhytskyi, Oleksandr},
      author={Yip, Nung~Kwan},
       title={Invariant measures for stochastic reaction-diffusion equations
  with weakly dissipative nonlinearities},
        date={2020},
        ISSN={1744-2508},
     journal={Stochastics},
      volume={92},
      number={8},
       pages={1197\ndash 1222},
         url={https://doi.org/10.1080/17442508.2019.1691212},
      review={\MR{4175844}},
}

\bib{mueller:93:coupling}{article}{
      author={Mueller, Carl},
       title={Coupling and invariant measures for the heat equation with
  noise},
        date={1993},
        ISSN={0091-1798},
     journal={Ann. Probab.},
      volume={21},
      number={4},
       pages={2189\ndash 2199},
         url={https://doi.org/10.1214/aop/1176989016},
      review={\MR{1245306}},
}

\bib{nualart.pardoux:92:white}{article}{
      author={Nualart, D.},
      author={Pardoux, \'{E}.},
       title={White noise driven quasilinear {SPDE}s with reflection},
        date={1992},
        ISSN={0178-8051},
     journal={Probab. Theory Related Fields},
      volume={93},
      number={1},
       pages={77\ndash 89},
         url={https://doi.org/10.1007/BF01195389},
      review={\MR{1172940}},
}

\bib{otobe:04:invariant}{article}{
      author={Otobe, Yoshiki},
       title={Invariant measures for {SPDE}s with reflection},
        date={2004},
        ISSN={1340-5705},
     journal={J. Math. Sci. Univ. Tokyo},
      volume={11},
      number={4},
       pages={425\ndash 446},
      review={\MR{2110922}},
}

\bib{revuz.yor:91:continuous}{book}{
      author={Revuz, Daniel},
      author={Yor, Marc},
       title={Continuous martingales and {B}rownian motion},
      series={Grundlehren der mathematischen Wissenschaften [Fundamental
  Principles of Mathematical Sciences]},
   publisher={Springer-Verlag, Berlin},
        date={1991},
      volume={293},
        ISBN={3-540-52167-4},
         url={https://doi.org/10.1007/978-3-662-21726-9},
      review={\MR{1083357}},
}

\bib{sanz-sole.sarra:02:holder}{incollection}{
      author={Sanz-Sol\'{e}, Marta},
      author={Sarr\`a, M\`onica},
       title={H\"{o}lder continuity for the stochastic heat equation with
  spatially correlated noise},
        date={2002},
   booktitle={Seminar on {S}tochastic {A}nalysis, {R}andom {F}ields and
  {A}pplications, {III} ({A}scona, 1999)},
      series={Progr. Probab.},
      volume={52},
   publisher={Birkh\"{a}user, Basel},
       pages={259\ndash 268},
      review={\MR{1958822}},
}

\bib{tessitore.zabczyk:98:invariant}{article}{
      author={Tessitore, Gianmario},
      author={Zabczyk, Jerzy},
       title={Invariant measures for stochastic heat equations},
        date={1998},
        ISSN={0208-4147},
     journal={Probab. Math. Statist.},
      volume={18},
      number={2, Acta Univ. Wratislav. No. 2111},
       pages={271\ndash 287},
      review={\MR{1671596}},
}

\bib{walsh:86:introduction}{incollection}{
      author={Walsh, John~B.},
       title={An introduction to stochastic partial differential equations},
        date={1986},
   booktitle={\'{E}cole d'\'{e}t\'{e} de probabilit\'{e}s de {S}aint-{F}lour,
  {XIV}---1984},
      series={Lecture Notes in Math.},
      volume={1180},
   publisher={Springer, Berlin},
       pages={265\ndash 439},
         url={https://doi.org/10.1007/BFb0074920},
      review={\MR{876085}},
}

\bib{yang.zhang:14:existence}{article}{
      author={Yang, Juan},
      author={Zhang, Tusheng},
       title={Existence and uniqueness of invariant measures for {SPDE}s with
  two reflecting walls},
        date={2014},
        ISSN={0894-9840},
     journal={J. Theoret. Probab.},
      volume={27},
      number={3},
       pages={863\ndash 877},
         url={https://doi.org/10.1007/s10959-012-0448-x},
      review={\MR{3245987}},
}

\bib{zambotti:01:reflected}{article}{
      author={Zambotti, Lorenzo},
       title={A reflected stochastic heat equation as symmetric dynamics with
  respect to the 3-d {B}essel bridge},
        date={2001},
        ISSN={0022-1236,1096-0783},
     journal={J. Funct. Anal.},
      volume={180},
      number={1},
       pages={195\ndash 209},
         url={https://doi.org/10.1006/jfan.2000.3685},
      review={\MR{1814427}},
}

\bib{zambotti:02:integration*1}{article}{
      author={Zambotti, Lorenzo},
       title={Integration by parts formulae on convex sets of paths and
  applications to {SPDE}s with reflection},
        date={2002},
        ISSN={0178-8051},
     journal={Probab. Theory Related Fields},
      volume={123},
      number={4},
       pages={579\ndash 600},
         url={https://doi.org/10.1007/s004400200203},
      review={\MR{1921014}},
}

\bib{zambotti:04:occupation}{article}{
      author={Zambotti, Lorenzo},
       title={Occupation densities for {SPDE}s with reflection},
        date={2004},
        ISSN={0091-1798,2168-894X},
     journal={Ann. Probab.},
      volume={32},
      number={1A},
       pages={191\ndash 215},
         url={https://doi.org/10.1214/aop/1078415833},
      review={\MR{2040780}},
}

\bib{zambotti:17:random}{book}{
      author={Zambotti, Lorenzo},
       title={Random obstacle problems},
      series={Lecture Notes in Mathematics},
   publisher={Springer, Cham},
        date={2017},
      volume={2181},
        ISBN={978-3-319-52095-7; 978-3-319-52096-4},
         url={https://doi.org/10.1007/978-3-319-52096-4},
        note={Lecture notes from the 45th Probability Summer School held in
  Saint-Flour, 2015},
      review={\MR{3616274}},
}

\bib{zambotti:21:brief}{article}{
      author={Zambotti, Lorenzo},
       title={A brief and personal history of stochastic partial differential
  equations},
        date={2021},
        ISSN={1078-0947,1553-5231},
     journal={Discrete Contin. Dyn. Syst.},
      volume={41},
      number={1},
       pages={471\ndash 487},
         url={https://doi.org/10.3934/dcds.2020264},
      review={\MR{4182330}},
}

\bib{zhang:12:large}{article}{
      author={Zhang, Tusheng},
       title={Large deviations for invariant measures of {SPDE}s with two
  reflecting walls},
        date={2012},
        ISSN={0304-4149},
     journal={Stochastic Process. Appl.},
      volume={122},
      number={10},
       pages={3425\ndash 3444},
         url={https://doi.org/10.1016/j.spa.2012.06.003},
      review={\MR{2956111}},
}

\end{biblist}
\end{bibdiv}

\end{document}